\newcommand{\E}{\mathbb{E}}
\newcommand{\R}{\mathbb{R}}
\newcommand{\N}{\mathbb{N}}
\newcommand{\D}{\mathcal{D}}
\newcommand{\HH}{H}
\newcommand{\hh}{\mathbb H}
\newcommand{\PP}{\mathbf{P}}
\newtheorem{theo}{Theorem}[section]
\newtheorem{rem}[theo]{Remark}
\newtheorem{propo}[theo]{Proposition}
\newtheorem{lemma}[theo]{Lemma}
\newtheorem{ass}{Assumption}
\begin{document}

\title{Weak error estimates of fully-discrete schemes for the stochastic Cahn--Hilliard equation}

\author{Charles-Edouard Br\'ehier}
\address{Univ Lyon, Université Claude Bernard Lyon 1, CNRS UMR 5208, Institut Camille Jordan, 43 blvd. du 11 novembre 1918, F--69622 Villeurbanne cedex, France}
\email{brehier@math.univ-lyon1.fr}

\author{Jianbo Cui}
\address{Department of Applied Mathematics, The Hong Kong Polytechnic University, Hung Hom, Kowloon, Hong Kong}
\email{jianbo.cui@polyu.edu.hk}

\author{Xiaojie Wang}
\address{School of Mathematics and Statistics and HNP-LAMA, Central South University, Changsha, China}
\email{x.j.wang7@csu.edu.cn}

\begin{abstract}
We study a class of fully-discrete schemes for the numerical approximation of solutions of stochastic Cahn--Hilliard equations with cubic nonlinearity and driven by additive noise. The spatial (resp. temporal) discretization is performed with a spectral Galerkin method (resp. a tamed exponential Euler method). We consider two situations: space-time white noise in dimension $d=1$ and trace-class noise in dimensions $d=1,2,3$. In both situations, we prove weak error estimates, where the weak order of convergence is twice the strong order of convergence with respect to the spatial and temporal discretization parameters. To prove these results, we show appropriate regularity estimates for solutions of the Kolmogorov equation associated with the stochastic Cahn--Hilliard equation, which have not been established previously and may be of interest in other contexts.
\end{abstract}

\maketitle

\section{Introduction}

Over the last decades, a large number of research works have been devoted to the numerical study of stochastic partial differential equations (SPDEs). The classical convergence theory of numerical approximations often requires nonlinearities of SPDEs to be globally Lipschitz continuous (see, e.g., \cite{lord2014introduction}). However, nonlinearities of many practical SPDE models are only locally Lipschitz continuous and the classical convergence results for standard explicit numerical schemes fail to hold (see, e.g. \cite{hutzenthaler2011strong,beccari2019strong,cui2021}). Prominent examples of SPDEs with non-globally Lipschitz continuous nonlinearities include
stochastic phase-field models such as the  stochastic Allen--Cahn equation and the stochastic Cahn--Hilliard equation. In the last decade, numerical analysis of such stochastic phase-field models has been a very active field of research, see, e.g., \cite{KLM11,MP17,FLZ17,KLL18,CCZZ18,FKLL18,BG18,BK19,FLZ20} and references therein. Progress on the convergence aspects of numerical approximations for stochastic phase-field model has been made recently, including strong and weak convergence rate analysis for the stochastic Allen-Cahn equation (see, e.g., \cite{BJ19,BCH19,CH19,QW19,BG20,MR3536036,LQ20,Wang2020,CGW21,CHS21a,Bre22}) and the strong convergence rate analysis for the stochastic Cahn--Hilliard equation (see, e.g., \cite{QW20,CH20,ABNP21,CHS21,HDS22,CQW22,QCW22}). 

In this paper, we consider the stochastic Cahn–-Hilliard equation driven by additive noise and with homogeneous Neumann boundary conditions
\begin{equation}\label{eq:SPDEintro}
\left\lbrace
\begin{aligned}
&du-\Delta wdt=dW^Q(t),\quad (t,x)\in (0,T]\times\mathcal{D},\\
&w=-\Delta u+u^3-u,\quad (t,x)\in(0,T]\times\mathcal{D},\\
&\frac{\partial u}{\partial n}=\frac{\partial w}{\partial n}=0, \quad (t,x)\in(0,T]\times\partial\mathcal{D},\\
&u(0,x)=u_0(x),\quad x\in\mathcal{D},
\end{aligned}
\right.
\end{equation}
where $\mathcal D=(0,1)^d$, with $d\in\{1,2,3\}$ and $\bigl(W^Q(t)\bigr)_{t\ge 0}$ is a suitable $Q$-Wiener process. We consider two situations: space-time white noise in dimension $d=1$ and trace-class noise in dimensions $d=1,2,3$. In the sequel, the SPDE~\eqref{eq:SPDEintro} is interpreted in the sense of stochastic evolution equations, see~\cite{DZ14}, with values in $H=L^2(\mathcal{D})$:
\begin{equation}\label{sche}
dX(t)
+
A\bigl(AX(t) +F(X(t))\bigr)dt
=
dW^Q(t),
\quad
X(0)=x_0,
\end{equation}
where the linear operator $-A$ is the Laplace operator on the bounded domain $\mathcal D$ equipped with homogeneous Neumann boundary condition. The nonlinearity $ F(u)=u^3-u$ corresponds to the double-well potential. The driving Wiener process $W^Q$ can be used to describe impurities in the alloy, effects of external fields, or thermal fluctuations. As a stochastic phase-field model for studying the spinodal decomposition \cite{Cook70,Lan71},  the stochastic 
Cahn-Hilliard equation (also called Cahn–Hilliard–Cook equation) shows a better agreement with
experimental data in the presence of noise \cite{GMW05}.

The objective of this manuscript is to give weak error estimates for numerical schemes applied to~\eqref{sche}. Recall that in contrast  with strong error estimates, which mainly deal with mean-square convergence of the numerical schemes, weak error estimates are associated with the convergence in distribution, and that in many situations the weak order of convergence is twice the strong order of convergence for such SPDE systems. This result has been established recently for the stochastic Allen--Cahn equation, see~\cite{CH19,BG20,CGW21,CHS21a,Bre22} (and previously for parabolic semilinear SPDEs with globally Lipschitz continuous nonlinearities). We intend to fill a gap in the existing literature and prove that the result also holds for the stochastic Cahn--Hilliard equation. Since the strong order of convergence of the numerical scheme has been studied in previous works, we focus on the weak error estimates. The problem is more challenging than in the stochastic Allen--Cahn case since the nonlinearity does not satisfy a monotony or one-sided Lipschitz continuity property in $H=L^2(\mathcal{D})$. Note that it is relevant to study carefully the weak convergence rate, since owing to~\cite{Lang16} it has an influence on the required number of samples in multilevel Monte Carlo schemes~\cite{Gil15}.

In order to describe the main results of this work, let us first describe the considered numerical method. We propose and analyze an explicit fully discrete scheme
which combines the spectral Galerkin method and a tamed exponential Euler scheme, which reads 
\begin{equation}\label{eq:schemeintro}
X_{N,k+1}
=
e^{-\Delta tA^2} X_{N,k}
+( A^2 )^{-1} ( I - e^{-\Delta tA^2} ) \frac {-A P_N F (X_{N,k} ) } { 1 + \Delta t \| P_N F(X_{N, k})\|}
+ e^{-\Delta tA^2} P_N \Delta W_k^Q.
\end{equation}
with initial value $X_{N,0}=P_Nx_0=X_N(0)$ and Wiener increments
\[
\Delta W_k^Q=W^Q(t_{k+1})-W^Q(t_k).
\]
In the numerical scheme, $N\in\N$ is the parameter of the spectral Galerkin method, $P_N$ are the associated orthogonal projections, $\Delta t=T/K$ is the time step size, $T\in(0,\infty)$, $K\in\N$ and $t_k=k\Delta t$ for $k\in\{0,\ldots,K-1\}$.

The main result of this article is the following weak error estimate: for any function $\varphi:H\to\R$ of class $\mathcal{C}^2$ with bounded first and second order derivatives and all $\gamma\in[0,\Gamma)$, one has
\begin{equation}\label{eq:resultintro}
\big|\E[\varphi(X_{N,K})]-\E[\varphi(X(T))]\big|\le C(\gamma,T,x_0,\phi)(\Delta t^{\frac{\gamma}{2}}+\lambda_N^{-\gamma }),
\end{equation}
where the value of the parameter $\Gamma$ depends on the regularity of the noise: $\Gamma=3/2$ in the space-time white noise case ($d=1$) and $\Gamma=2$ in the trace-class noise case ($d\in\{1,2,3\}$). We refer to Theorems~\ref{theo:Galerkin} and~\ref{theo:tamed} below for precise statements. As mentioned above, we thus prove that for the considered scheme~\eqref{eq:schemeintro}, the weak order of convergence is twice the strong order (see the discussion below). We refer to the recent preprint~\cite{CQW22} for the strong error estimates satisfied by the scheme~\eqref{eq:schemeintro} considered in this article. We would like to mention that the recent preprint~\cite{CGH21} gives preliminary weak error estimates for implicit fully discrete schemes applied to stochastic Cahn--Hilliard equation~\eqref{sche}, however the authors of that preprint employ different techniques and they have not been able to prove that the weak order is twice the strong order. Our work thus substantially improves~\cite{CGH21}.

Let us review other recent relevant works on numerical approximations for the stochastic Cahn--Hilliard equation. The articles~\cite{FKLL18,KLM11} obtain the strong convergence and the semi-strong error estimates (in a large subsample space) of the finite element method and its implicit discretization for equation \eqref{sche} driven by a spatial regular noise. 
The manuscript~\cite{HJ14} studies both the exponential integrability and the strong convergence rate of the spectral Galerkin method for \eqref{sche} driven by trace class noise. By exploiting the equivalence of the original problem and a system consisting of a random PDE and stochastic convolution, the article~\cite{QW20} presents the strong convergence rate of the finite element method and a fully discrete scheme in the trace class noise case, and the article~\cite{CHS21} obtains strong error estimates with higher order for an accelerated implicit full discretization in the space-time white noise case. Finally, the article~\cite{CH20} provides a strong convergence analysis of numerical schemes for the stochastic Cahn--Hilliard equation driven by multiplicative noise with unbounded diffusion coefficient.

Let us also explain why it is relevant to consider the tamed numerical scheme~\eqref{eq:schemeintro}. It is known that applying standard explicit schemes (e.g., Euler-Maruyama) to stochastic differential equations (SDEs) with non-globally Lipschitz continuous coefficients may lead to divergence approximations due to the lack of moment bounds, see for instance \cite{hutzenthaler2011strong}. Using a tamed Euler method~\cite{hutzenthaler2012strong} is one of the many possible strategies to overcome this issue, and avoid implicit schemes (that would be admissible but may lead to higher computational costs), see the monograph~\cite{hutzenthaler2015numerical}  and references therein for further examples. Recently, tamed and truncated Euler schemes have been extended to treat  SPDEs with non-globally Lipschitz continuous coefficients, see for instance~\cite{gyongy2016convergence,jentzen2020strong,Wang2020,CQW22,CGW21,BJ19,CQW22}.
Most of the references are concerned with strong and weak convergence rates of explicit time-stepping schemes for Allen--Cahn type SPDEs. The tamed exponential explicit Euler scheme~\eqref{eq:schemeintro} has been introduced and studied for \eqref{sche} in the recent preprint~\cite{CQW22}, where only the strong convergence is studied and the $Q$-Wiener process is chosen to ensure mass preservation of the solution. Instead the present work considers
a general $Q$-Wiener process and for completeness we thus give a detailed proof of the required moment bounds property, see Theorem~\ref{theo:momentbounds-tamed} below, for the scheme~\eqref{eq:schemeintro}. In addition, the proof proposed in Section~\ref{sec:proofmomentsfull} is slightly different from the proof of~\cite[Corollary~3.6]{CQW22}.

In order to prove the weak error estimates~\eqref{eq:resultintro} (see Theorem~\ref{theo:Galerkin} and~\ref{theo:tamed}), one needs to prove some auxiliary regularity properties for solutions of the (infinite dimensional) Kolmogorov equations associated with~\eqref{sche}. We refer to Theorem~\ref{theo:Kolmogorov} below for a precise statement. Such results have been obtained for parabolic semilinear equations, such as the stochastic Allen--Cahn equation, however it seems that Theorem~\ref{theo:Kolmogorov} is the first result of this type for the stochastic Cahn--Hilliard equation. One of the major difficulty we need to deal with is the fact that the nonlinearity $AF$ in~\eqref{sche} does not satisfy a monotonicity property in $H$ (whereas the nonlinearity $F$ in the Allen--Cahn case is one-sided Lipschitz continuous). To overcome this issue, we study energy estimates in $L^2$ and $H^{-1}$ norms, which are similar to the tools used in~\cite{CQW22,QCW22} for instance to prove strong error estimates. The proofs of Theorem~\ref{theo:Galerkin} and~\ref{theo:tamed} are then based on standard arguments which need to combine carefully the moment bounds on the numerical scheme and the regularity results on the Kolmogorov equation to obtain the expected result, that the weak order is twice the strong order. Note that the proof does not require Malliavin calculus techniques, due to the choice of a scheme based on an exponential integrator. Theorem~\ref{theo:Kolmogorov} gives regularity results on the Kolmogorov equation associated with~\eqref{sche} which may be useful in other contexts.

In future works, it may be interesting to study weak error estimates for variants of the scheme~\eqref{eq:schemeintro}. Instead of using a spectral Galerkin method, the spatial approximation can be performed using a finite element method (see for instance~\cite{KLM11} and the strong error estimates in~\cite{QW20}). For the temporal discretization, one may use implicit methods for both the linear and the nonlinear parts. In addition, it may be interesting to study the long time behavior of the scheme and of the weak error estimates and the issue of approximation of invariant distributions.

The article is organized as follows. Section \ref{sec:setting} is devoted to giving the notation, the assumptions in order to be able to handle the stochastic evolution equation~\eqref{sche}.
In Section~\ref{sec:main}, we then provide a detailed presentation of the numerical scheme~\eqref{eq:schemeintro}, and separate the analysis of the spatial and temporal discretization errors. We state the main results of this article: Theorem~\ref{theo:Galerkin} and~\ref{theo:tamed}. Section~\ref{sec:Kolmogorov} is concerned with the statement and the proof of the regularity properties of the Kolmogorov equation. The proof of Theorem~\ref{theo:Galerkin} on the spatial discretization error is given in Section~\ref{sec:proofGalerkin} whereas the proof of Theorem~\ref{theo:tamed} on the temporal discretization error is given in Section~\ref{sec:prooffull}. Section~\ref{sec:proofmomentsfull} gives the proof of the moment bounds stated in Theorem~\ref{theo:momentbounds-tamed}.

\section{Setting}\label{sec:setting}

\subsection{Notation}

The set of integers is denoted by $\N=\{1,\ldots,\}$, and $\N_0=\{0\}\cup\N$. In the sequel, we often use the notation $j\ge 1$ (resp. $j\ge 0$) instead of $j\in\N$ (resp. $j\in\N_0$).

Let $d\in\{1,2,3\}$ and let $\D=(0,1)^d\subset \R^d$ be the standard $d$-dimensional unit cube. For all $p\in[1,\infty]$, let $L^p$ denote the standard Banach space $L^p(\D)$ of the $p$-fold integrable real-valued mappings defined on $\D$, with the norm denoted by $\|\cdot\|_{L^p}$. In this article, we denote the space of square-integrable real-valued mappings defined on $\D$ ($p=2$) by $\HH=L^2=L^2(\D)$. Note that $\HH$ is an Hilbert space, with norm and inner product denoted by $\|\cdot\|$ and $\langle\cdot,\cdot\rangle$ respectively. For all $x,y\in \HH$, one has $\langle x,y\rangle=\int_{\D}x(\xi)y(\xi)d\xi$, and for all $p\in(1,\infty)$ one also use the notation $\langle x,y\rangle=\int_{\D}x(\xi)y(\xi)d\xi$ for all $x\in L^p(\D)$ and $y\in L^{\frac{p}{p-1}}(\D)$. If $L:\HH\to\HH$ is a linear operator, its trace (when it is finite) is denoted by ${\rm Tr}(L)$. The set of bounded linear operators from $\HH$ to $\HH$ is denoted by $\mathcal{L}(\HH)$, and the set of Hilbert--Schmidt linear operators from $\HH$ to $\HH$ is denoted by  $\mathcal{L}_2(\HH)$, with associated norm denoted by $\|\cdot\|_{\mathcal{L}_2(\HH)}$.

For all $s\in[0,2]$, let $W^{s,2}(\D)$ denote the standard fractional Sobolev spaces, with norm denoted by $\|\cdot\|_{W^{s,2}}$, see for instance~\cite[Section~1.11]{Yagi10}.

If $\varphi:\HH\to\R$ is a mapping of class $\mathcal{C}^2$ with bounded first and second order derivatives, set
\begin{equation}\label{eq:varphi-notation}
\vvvert\varphi\vvvert_2=\underset{x\in \HH}\sup~\underset{h\in\HH\setminus\{0\}}\sup~\frac{D\varphi(x).h}{|h|}+\underset{x\in\HH}\sup~\underset{h_1,h_2\in\HH\setminus\{0\}}\sup~\frac{D^2\varphi(x).(h_1,h_2)}{|h_1||h_2|}.
\end{equation}

In the sequel, the values of constants $C\in(0,\infty)$ appearing in the statements and proofs of the moment bounds and of the error estimates may change from line to line. Similarly, the value of the integer $q\in\N$ may also change from line to line.

\subsection{Assumptions}\label{sec:ass}

\subsubsection{Linear operator}

In this article, $-A$ is the Laplace operator $\Delta$ considered with homogeneous Neumann boundary conditions on $\partial\D$. This means that the linear operator $A$ is unbounded and self-adjoint, with domain $D(A)=\{v\in W^{2,2}(\D);~\frac{\partial v}{\partial n}=0~\text{on}~\partial\D\}\subset \HH$. Moreover, there exists a complete orthonormal system $\bigl(e_j\bigr)_{j=0,1,\ldots}$ of the Hilbert space $\HH$ and a non-decreasing sequence of nonnegative real numbers $\bigl(\lambda_j\bigr)_{j\ge 0}$, such that for all $j\in\N_0$ one has
\[
Ae_j=\lambda_je_j.
\]
One has $e_0(\cdot)=1$ and $\lambda_0=0$. In addition, one has $\lambda_j\ge \lambda_1>0$ for all $j\in\N$, and there exists $c_d\in(0,\infty)$ such that $\lambda_j\sim c_dj^{2/d}$ when $j\to\infty$.

Introduce the Hilbert space $\hh\subset\HH$ defined by
\begin{equation}\label{eq:hh}
\hh={\rm span}\{e_j;~j\ge 1\}
\end{equation}
and let $\PP$ be the associated orthogonal projection operator: for all $x\in\HH$, one has
\begin{equation}\label{eq:PP}
\PP x=\sum_{j\ge 1}\langle x,e_j\rangle e_j=x-\langle x,e_0\rangle e_0.
\end{equation}

For all $\alpha\in\R^+$ and all $x\in\hh$, set
\begin{equation}\label{eq:seminorm_alpha}
|x|_\alpha^2=\sum_{j=1}^{\infty}\lambda_j^\alpha \langle x,e_j\rangle^2\in[0,\infty],
\end{equation}
and for all $\alpha\in\R^+$ introduce the space
\begin{equation}\label{eq:hh_alpha}
\hh^\alpha=\{x\in \hh;~|x|_\alpha<\infty\}.
\end{equation}
The space $\hh^\alpha$ may be considered both as a dense subspace of the Hilbert space $\hh$, and as an Hilbert space with norm $|\cdot|_\alpha$. Note that one obtains $\hh=\hh^0$ when $\alpha=0$, and in that case the notation $|\cdot|=|\cdot|_0$ is used in the sequel.

For any $\alpha\in\R^+$, the linear unbounded self-adjoint, operator $A^{\frac{\alpha}{2}}$ is then defined with the domain $D(A^{\frac{\alpha}{2}})=\hh^\alpha$ and such that for all $x\in \hh^\alpha$ one has
\begin{equation}\label{eq:A_alpha}
A^{\frac{\alpha}{2}}x=\sum_{j\ge 1}\lambda_j^{\frac{\alpha}{2}}\langle x,e_j\rangle e_j.
\end{equation}
Finally, for all $\alpha\in\R^+$ set
\begin{equation}\label{eq:HH_alpha}
\HH^\alpha=\{x\in \HH;~\PP x\in\hh^\alpha\},
\end{equation}
and for all $x\in \HH^\alpha$ set
\begin{equation}\label{eq:norm_alpha}
\|x\|_\alpha^2=|\PP x|_\alpha^2+\langle x,e_0\rangle^2.
\end{equation}
For any $\alpha\in\R^+$, one defines the bounded self-adjoint linear operator $A^{-\frac{\alpha}{2}}$ on $\hh$ as follows: for all $x\in\hh$, set
\begin{equation}\label{eq:A_alpha-neg}
A^{-\frac{\alpha}{2}}x=\sum_{j\ge 1}\lambda_j^{-\frac{\alpha}{2}}\langle x,e_j\rangle e_j.
\end{equation}

For all $N\in\N$, define the finite dimensional subspace
\begin{equation}\label{eq:HH_N}
\HH_N={\rm span}\{e_0,\ldots,e_N\}
\end{equation}
of $\HH$. In addition, define the orthogonal projection operator $P_N$ as follows: for all $x\in H$, set
\begin{equation}\label{eq:P_N}
P_Nx=\sum_{j=0}^{N}\langle x,e_j\rangle e_j \in\HH_N.
\end{equation}
Let also $\hh_N={\rm span}\{e_1,\ldots,e_N\}$. Note that $\HH_N\subset \HH^\alpha$ and $\hh_N\subset \hh^\alpha$ for all $\alpha\ge 0$ and $N\in\N$.

It remains to introduce the semigroup $\bigl(e^{-tA^2}\bigr)_{t\ge 0}$ of bounded linear operators defined on $\HH$. For all $t\ge 0$ and all $x\in \HH$, set
\begin{equation}\label{eq:semigroup}
e^{-tA^2}x=\sum_{j\ge 0}e^{-t\lambda_j^2}\langle x,e_j\rangle.
\end{equation}
Note that $e^{-tA^2}e_0=e_0$ for all $t\ge 0$, and that for all $x\in\hh$ and $t\ge 0$, one has $e^{-tA^2}x\in\hh$. For all $t\ge 0$, $e^{-tA^2}$ is a bounded linear operator from $\HH$ to $\HH$, and one has
\begin{equation}\label{eq:bounds-semigroup}
\begin{aligned}
\|e^{-tA^2}x\|&\le \|x\|,\\
\|e^{-tA^2} \PP x\|&\le e^{-t\lambda_1^2}\|\PP x\|,
\end{aligned}
\end{equation}
for all $t\ge 0$ and $x\in\HH$.

The following smoothing property is employed frequently in this article: for all $\alpha\in[0,4]$, there exists $C_\alpha\in(0,\infty)$ such that for all $t\in(0,\infty)$ and all $x\in\HH$, one has $e^{-tA^2}x\in \HH^\alpha$ with
\begin{equation}\label{eq:smoothing-HH}
\|e^{-tA^2}x\|_\alpha\le C_\alpha (1+t^{-\frac{\alpha}{4}})\|x\|.
\end{equation}
In fact, more precisely one has $e^{-tA^2}\PP x\in\hh^\alpha$ and the inequality
\begin{equation}\label{eq:smoothing-hh}
|e^{-tA^2}\PP x|_\alpha\le C_\alpha t^{-\frac{\alpha}{4}}|\PP x|.
\end{equation}
In addition, the following regularity property holds: for all $\alpha\in[0,4]$, there exists $C_\alpha\in(0,\infty)$ such that for all $t\ge 0$ and all $x\in\HH^\alpha$, one has
\begin{equation}\label{eq:regul}
\|\bigl(e^{-tA^2}-I\bigr)x\|\le C_\alpha t^{\frac{\alpha}{4}}\|x\|_\alpha.
\end{equation}
In fact, more precisely, one has the equality $\langle e^{-tA^2}x,e_0\rangle=\langle x,e_0\rangle$ and the inequality
\begin{equation}\label{eq:regul2}
\|\bigl(e^{-tA^2}-I\bigr)\PP x\|\le C_\alpha t^{\frac{\alpha}{4}}|\PP x|_\alpha.
\end{equation}

\subsubsection{Nonlinearity}

Let $f:\R\to\R$ be the polynomial function defined by $f(z)=z^3-z$ for all $z\in\R$. For all $p\in[1,\infty]$, the mapping $F:L^{3p}\to L^p$ is defined by the equality
\begin{equation}\label{eq:F}
F(x)=f(x(\cdot))=x^3-x,
\end{equation}
for all $x\in L^{3p}$.

Observe that for all $y\in L^4$, one has $F(y)\in L^{\frac43}$, therefore the expression $\langle F(y),y\rangle$ is well-defined. In addition, for all arbitrarily small $\epsilon\in(0,1)$, there exists $C_\epsilon\in(0,\infty)$ such that for all $y\in L^4$, one has
\begin{equation}\label{eq:ineqF1}
-\langle F(y),y\rangle=-\|y\|_{L^4}^4+\|y\|_{L^2}^2\le -(1-\epsilon)\|y\|_{L^4}^4+C_\epsilon
\end{equation}
and for all $y,z\in L^4$, one has
\begin{equation}\label{eq:ineqF2}
|\langle F(y+z)-F(y),y\rangle|\le \epsilon\|y\|_{L^4}^4+C_\epsilon\bigl(1+\|z\|_{L^4}^4\bigr).
\end{equation}

\subsubsection{Wiener process}

Let $\bigl(\beta_j\bigr)_{j\ge 0}$ be a sequence of independent standard real-valued Wiener processes, defined on a probability space $\bigl(\Omega,\mathcal{F},\mathbb{P}\bigr)$, equipped with a filtration $\bigl(\mathcal{F}_t\bigr)_{t\ge 0}$ which satisfies the usual conditions.

The cylindrical Wiener process is defined formally by
\begin{equation}\label{eq:W}
W(t)=\sum_{j\ge 0}\beta_j(t)e_j,
\end{equation}
where we recall that $\bigl(e_j\bigr)_{j\in\N_0}$ is the complete orthonormal system of $\HH$ associated with the linear operator $A$. Note that
\[
\E[\|A^{-\frac{\alpha}{2}}W(t)\|^2]=\sum_{j\ge 1}\lambda_j^{-\alpha}<\infty
\]
if and only if $\alpha>d/2$ (since one has $\lambda_j\sim c_dj^{2/d}$ when $j\to\infty$), where we recall that $(-A)^{-\frac{\alpha}{2}}$ is defined by~\eqref{eq:A_alpha-neg}. In particular, $W(t)$ does not take values in $\HH$.

Two cases are treated in this article. On the one hand, we consider the stochastic Cahn--Hilliard equation driven by space-time white noise in dimension $d=1$: in this case, set $Q=I$ the identity operator, and $W^Q(t)=W(t)$. On the other hand, we consider the stochastic Cahn--Hilliard equation driven by trace-class noise in dimension $d\in\{1,2,3\}$: in this case, let $Q$ be a self-adjoint, nonnegative, trace-class linear operator on $\HH$, and set $W^Q(t)=Q^{\frac12}W(t)$, where $Q^{\frac12}$ is the square-root of $Q$. Precisely, there exists a sequence of nonnegative real numbers $\bigl(q_j\bigr)_{j\in\N_0}$, and a complete orthonormal system $\bigl(e_j^Q\bigr)_{j\in\N_0}$ of $\HH$, such that one has $\sum_{j\ge 0}q_j<\infty$, and for all $x\in\HH$ one has
\[
Qx=\sum_{j\ge 0}q_j\langle x,e_j^Q\rangle e_j^Q.
\]
The operator $Q^{\frac12}$ is defined such that for all $x\in\HH$ one has
\[
Q^{\frac12}x=\sum_{j\ge 0}\sqrt{q_j}\langle x,e_j^Q\rangle e_j^Q.
\]
The operator $Q$ is trace-class by assumption, which implies that $Q^{\frac12}$ is an Hilbert--Schmidt operator:
\begin{align*}
{\rm Tr}(Q)&=\sum_{j\ge 0}\langle Qe_j,e_j\rangle=\sum_{j\ge 0}\langle Qe_j^Q,e_j^Q\rangle=\sum_{j\ge 0}q_j<\infty\\
\|Q^{\frac12}\|_{\mathcal{L}_2(\HH)}^2&=\sum_{j\ge 0}\|Q^{\frac12}e_j\|^2=\sum_{j\ge 0}\|Q^{\frac12}e_j^Q\|^2=\sum_{j\ge 0}q_j<\infty.
\end{align*}
The $Q$-Wiener process $\bigl(W^Q(t)\bigr)_{t\ge 0}$ can be expressed as
\begin{equation}\label{eq:WQ_1}
W^Q(t)=\sum_{j\ge 0}\sqrt{q}_j\beta_j^Q(t)e_j^Q,
\end{equation}
where $\bigl(\beta_j^Q(t)\bigr)_{j\in\N_0}$ is a sequence of independent standard real-valued Wiener processes. In the sequel, we only use the expression
\begin{equation}\label{eq:WQ_2}
W^Q(t)=\sum_{j\ge 0}\beta_j(t)Q^{\frac12}e_j=Q^{\frac12}W(t),
\end{equation}
where $W(t)$ is given by~\eqref{eq:W}, which is well-defined in $\HH$: for all $t\ge 0$ one has $\E[\|W^Q(t)\|^2]=t\sum_{j\ge 0}q_j=t{\rm Tr}(Q)<\infty$. The two formulations~\eqref{eq:WQ_1} and~\eqref{eq:WQ_2} for $W^Q(t)$ are equivalent.

Note that if the range of $Q$ is included in $\hh$, meaning that $Qx\in\hh$ for all $x\in\HH$, or equivalently that $\langle Qx,e_0\rangle=0$ for all $x\in\HH$, then the $Q$-Wiener process $W^Q$ takes values in $\hh$: $W^Q(t)\in\hh$ for all $t\ge 0$. The condition above also implies the mass conservation for solutions of~\eqref{eq:SPDEintro} (see \cite{FKLL18,CHS21} for instance).

Let us mention that it is not necessary to assume that the linear operators $A$ and $Q$ commute.

\subsection{Auxiliary inequalities}\label{sec:auxineq}

Owing to the Cauchy--Schwarz inequality, for all $x\in\HH^1$, one has
\begin{equation}\label{eq:ineq1}
\|x\|^2\le |A^{-\frac{1}{2}}\PP x|~|A^{\frac{1}{2}}\PP x|+\langle x,e_0\rangle^2=\|A^{-\frac{1}{2}}\PP x\| \|\PP x\|_1+\langle x,e_0\rangle^2.
\end{equation}

Let us recall the following result concerning the spaces $\HH^\alpha=D(A^{\frac{\alpha}{2}})$ and the associated norms $\|\cdot\|_\alpha$ for $\alpha\in[0,2]$, see for instance~\cite[Theorem~16.9]{Yagi10}. On the one hand, for all $\alpha\in[0,\frac32)$, one has
\[
\HH^\alpha=W^{\alpha,2}(\D).
\]
On the other hand, for all $\alpha\in(\frac32,2]$, one has
\[
\HH^\alpha=W_N^{\alpha,2}(\D)=\{v\in W^{\alpha,2}(\D);~\frac{\partial v}{\partial n}=0~\text{on}~\partial\D\}\subset W^{\alpha,2}(\D).
\]
In addition, for all $\alpha\in[0,2]\setminus\{\frac32\}$, the norms $\|\cdot\|_\alpha$ and $\|\cdot\|_{W^{\alpha,2}}$ are equivalent on $\HH^\alpha$: there exists $C_\alpha\in[1,\infty)$ such that for all $x\in\HH^\alpha$ one has
\begin{equation}\label{eq:norm-equivalence}
C_\alpha^{-1}\|x\|_{W^{\alpha,2}}\le \|x\|_{\alpha}\le C_\alpha\|x\|_{W^{\alpha,2}}.
\end{equation}
Recall the following Sobolev embedding property: for all $\alpha>d/2$, one has $W^{\alpha,2}(\D)\subset L^\infty(\D)$, and there exists $C_{\alpha,d}\in(0,\infty)$ such that
\begin{equation}\label{eq:Sobolev}
\|x\|_{L^\infty}\le C_{\alpha,d}\|x\|_{W^{\alpha,2}}.
\end{equation}
As a consequence of~\eqref{eq:norm-equivalence} and~\eqref{eq:Sobolev}, for all $\alpha\in(d/2,2]\setminus\{3/2\}$, there exists $C_\alpha\in(0,\infty)$ such that one has the inequality
\begin{equation}\label{eq:Sobolev-norm_alpha}
\|x\|_{L^\infty}\le C_{\alpha}\|x\|_{\alpha}
\end{equation}
for all $x\in\HH^\alpha$. In addition, for all $\alpha\in(d/2,2]\setminus\{3/2\}$, the space $\HH^\alpha$ is an algebra: there exists $C_\alpha\in(0,\infty)$ such that for all $x,y\in\HH^\alpha$, one has $xy\in\HH^\alpha$ and the inequality
\begin{equation}\label{eq:algebra}
\|xy\|_\alpha\le C_\alpha\|x\|_\alpha\|y\|_\alpha.
\end{equation}
Note also that for all $x\in\HH$ and $y\in\HH^\alpha$ with $\alpha\in(d/2,2]\setminus\{3/2\}$, one has
\begin{equation}\label{eq:ineq2}
\|xy\|\le C_\alpha\|x\|\|y\|_\alpha.
\end{equation}
A duality argument implies the following additional inequality: for all $\alpha\in(d/2,2]\setminus\{3/2\}$ and $x\in L^1$, one has
\begin{equation}\label{eq:ineq3}
\|A^{-\frac{\alpha}{2}}\PP x\|\le C_\alpha\|x\|_{L^1}.
\end{equation}
Note that $\HH^1$ is an algebra only when $d=1$, whereas $\HH^2$ is an algebra for all values of $d\in\{1,2,3\}$.

In order to prove the moment bounds for the exact and numerical solutions stated below, the cases $d=1$ and $d\in\{2,3\}$ are treated using different techniques. On the one hand, when $d=1$, one has the Sobolev embedding $H^{\frac13}=W^{\frac13,2}(\D)\subset L^6(\D)$ (see for instance~\cite[Theorem~6.7]{MR2944369}), and there exists $C\in(0,\infty)$ such that for all $x\in\HH^{\frac13}$ one has
\begin{equation}\label{eq:L6-d1}
\|x\|_{L^6}\le C\|x\|_{W^{\frac13,2}}\le C\|x\|_{\frac13}\le C\bigl(|A^{\frac16}\PP x|+|\langle x,e_0\rangle\bigr|).
\end{equation}
Moreover, when $d=1$, one also has the following Gagliardo--Nirenberg inequality, see, e.g., \cite[Theorem 1]{MR3813967}: there exists $C\in(0,\infty)$ such that for all $x\in\HH^2$ one has
\begin{equation}\label{eq:L6-d1-GN}
\|x\|_{L^6}\le C\|x\|_{W^{2,2}}^{\frac16}\|x\|^{\frac56}\le C\|x\|_2^{\frac16}\|x\|^{\frac56}\le C\bigl(|A\PP x|+|\langle x,e_0\rangle\bigr|)^{\frac16}\|x\|^{\frac56}.
\end{equation}
On the other hand, when $d\in\{2,3\}$, we introduce the energy functional $J$ defined as follows: for all $x\in \HH^1\cap L^4$ set
\begin{equation}\label{eq:energy}
J(x)=\frac12\|x\|_{\HH^1}^2+\frac14\|x\|_{L^4}^4-\frac12\|x\|_{L^2}^2.
\end{equation}

In addition, for all $d\in\{1,2,3\}$, one has the Sobolev embedding properties $H^{\frac{d}{4}}\subset L^4$ and $H^1\subset L^6$ (see for instance~\cite[Theorem~6.7]{MR2944369}), and there exists $C_d\in(0,\infty)$ such that for all $x\in\HH^{\frac{d}{4}}$ one has
\begin{align}
\|x\|_{L^4}\le C\|x\|_{W^{\frac{d}{4},2}}\le C\|x\|_{\frac{d}{4}},\label{eq:L4-d}\\
\|x\|_{L^6}\le C\|x\|_{W^{1,2}}\le C\|x\|_{1}.\label{eq:L6-d}
\end{align}

The properties of the spaces $\HH^\alpha$ discussed above are now applied to state a few additional properties of the nonlinearity $F$. For all $d\in\{1,2,3\}$ and $\gamma\in(d/2,2]\setminus\{3/2\}$, there exists $C_\gamma\in(0,\infty)$ such that for all $x\in\HH^\gamma$ one has $F(x)\in\HH^\gamma$ and
\begin{equation}\label{eq:boundF1}
\|F(x)\|_\gamma\le C_\gamma \bigl(1+\|x\|_\gamma^3\bigr).
\end{equation}
The inequality~\eqref{eq:boundF1} is a straightforward consequence of the algebra property~\eqref{eq:algebra} for $\HH^\gamma$. In addition, for all $\gamma\in(d/2,2]\setminus\{3/2\}$, there exists $C_\gamma\in(0,\infty)$ such that for all $y\in L^4$ and $z\in\HH^\gamma$ one has
\begin{equation}\label{eq:boundF2}
\|F(y+z)-F(y)\|\le  \bigl(3\|yz^2\|+3\|zy^2\|+\|z^3\|\bigr)\le  C_\gamma\bigl(1+\|y\|_{L^4}^2\bigr)\bigl(1+\|z\|_{\gamma}^3\bigr).
\end{equation}
Finally, for all $\gamma\in(d/2,2]\setminus\{3/2\}$, there exists $C_\gamma\in(0,\infty)$ such that for all $x\in\HH^\gamma$ and all $h,k\in\HH$, one has
\begin{equation}\label{eq:Fderiv}
\begin{aligned}
\|F'(x).h\|&=\|f'(x(\cdot))h(\cdot)\|_{L^2}\le C\bigl(1+\|x\|_\gamma^2\bigr)\|h\|\\
\|F''(x).(h,k)\|_{L^1}&=\|f''(x(\cdot))h(\cdot)k(\cdot)\|_{L^1}\le C\bigl(1+\|x\|_\gamma\bigr)\|h\|\|k\|.
\end{aligned}
\end{equation}
The inequalities~\eqref{eq:Fderiv} are straightforward consequences of the algebra property~\eqref{eq:algebra} for $\HH^\gamma$.

\subsection{Well-posedness and moment bounds}\label{sec:wellposed}

Let us introduce the stochastic convolution process $\bigl(Z(t)\bigr)_{t\ge 0}$ defined by
\begin{equation}\label{eq:Z}
Z(t)=\int_0^t e^{-(t-s)A^2}dW^Q(s),
\end{equation}
which is understood as the unique mild solution of the stochastic evolution equation
\[
dZ(t)+A^2Z(t)dt=dW^Q(t)
\]
with initial value $Z(0)=0$.

Recall that the Wiener process $\bigl(W^Q(t)\bigr)_{t\ge 0}$ is defined by~\eqref{eq:WQ_2}, where two cases are considered for the covariance operator $Q$. In order to study the properties of the stochastic convolution process defined by~\eqref{eq:Z}, it is convenient to introduce the parameters $\Gamma$ and $\Gamma_0$ defined as follows.
\begin{ass}\label{ass:Q}
Let one of the conditions be satisfied.
\begin{enumerate}
\item[$(i)$] Let $d=1$ and $Q=I$. In that case, set $\Gamma=\frac32$ and $\Gamma_0=1$.
\item[$(ii)$] Let $d\in\{1,2,3\}$ and let $Q$ be a self-adjoint, nonnegative, trace-class linear operator on $\HH$. In that case, set $\Gamma=2$ and $\Gamma_0=1+\frac{d}{4}$.
\end{enumerate}
\end{ass}
Note that in both cases the condition $\gamma\in(\Gamma_0,\Gamma)$ implies $\gamma\in(d/2,2]\setminus\{3/2\}$. Imposing the condition $\gamma\in(\Gamma_0,\Gamma)$ is necessary in the proofs, however assuming that $\gamma>\Gamma_0$ is not restrictive since the orders of convergence are obtained when choosing $\gamma$ arbitrarily close to $\Gamma$.

One has the following result concerning the stochastic convolution (see, e.g., \cite[Section 2]{CHS21} and~\cite[Lemma 3.2]{QCW22}).
\begin{lemma}
Let Assumptions~\ref{ass:Q} be satisfied. For all $T\in(0,\infty)$, $m\in\N$ and all $\gamma\in[0,\Gamma)$, one has
\begin{equation}\label{eq:strong-moment-Zt}
\E[\underset{0\le t\le T}\sup~\|Z(t)\|_{L^\infty}^m]+ \E[\underset{0\le t\le T}\sup~\|Z(t)\|_\gamma^m]<\infty.
\end{equation}
\end{lemma}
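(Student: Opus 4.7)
The proof plan is to combine It\^o's isometry (for pointwise second moments), Gaussian moment equivalence (to raise the order to arbitrary $m$), and the factorization method of Da Prato--Kwapie\'n--Zabczyk (to handle the supremum in time). The $L^\infty$-norm bound reduces to the $\|\cdot\|_\gamma$-bound via the Sobolev embedding~\eqref{eq:Sobolev-norm_alpha}, which is available because $\Gamma>d/2$ in both cases of Assumption~\ref{ass:Q}.

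Decomposing $Z(t)=\PP Z(t)+\langle Z(t),e_0\rangle e_0$, the mass mode $\langle Z(t),e_0\rangle$ is a real Gaussian with variance $t\langle Qe_0,e_0\rangle$, which causes no difficulty. For the projected part, It\^o's isometry gives
\[
\E[|\PP Z(t)|_\gamma^2]=\int_0^t\|A^{\gamma/2}\PP e^{-(t-s)A^2}Q^{1/2}\|_{\mathcal{L}_2(\HH)}^2\,ds.
\]
In case~(ii), I would bound this by $\|A^{\gamma/2}\PP e^{-(t-s)A^2}\|_{\mathcal{L}(\HH)}^2\,{\rm Tr}(Q)\le C(t-s)^{-\gamma/2}{\rm Tr}(Q)$ using the smoothing estimate~\eqref{eq:smoothing-hh}; the time integral is finite iff $\gamma<2=\Gamma$. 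In case~(i), expanding in the eigenbasis yields $\sum_{j\ge 1}\lambda_j^\gamma\int_0^t e^{-2(t-s)\lambda_j^2}\,ds\le\tfrac12\sum_{j\ge 1}\lambda_j^{\gamma-2}$, which converges iff $\gamma<3/2=\Gamma$ since $\lambda_j\sim c_1 j^{2}$ when $d=1$.

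To upgrade to the supremum in time and to arbitrary $m$-th moments, I would apply the factorization identity
\[
Z(t)=\frac{\sin(\pi\alpha)}{\pi}\int_0^t(t-s)^{\alpha-1}e^{-(t-s)A^2}Y_\alpha(s)\,ds,\quad Y_\alpha(s)=\int_0^s(s-r)^{-\alpha}e^{-(s-r)A^2}dW^Q(r),
\]
for $\alpha\in(0,\tfrac12)$ small. H\"older's inequality with $p>1/\alpha$ together with the contraction property of the semigroup gives
\[
\sup_{t\in[0,T]}\|Z(t)\|_\gamma^p\le C(T,\alpha,p)\int_0^T\|Y_\alpha(s)\|_\gamma^p\,ds,
\]
so it is enough to bound $\E[\|Y_\alpha(s)\|_\gamma^p]$ uniformly in $s\in[0,T]$. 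Since $Y_\alpha(s)$ is Gaussian, equivalence of moments reduces this to bounding $\E[\|Y_\alpha(s)\|_\gamma^2]$, which is computed exactly as for $Z$ with an additional factor $(s-r)^{-2\alpha}$ inside the It\^o isometry; choosing $\alpha>0$ small enough (depending on $\Gamma-\gamma$) preserves integrability. The $L^\infty$-bound finally follows by picking $\gamma_0\in(d/2,\Gamma)\setminus\{3/2\}$, which exists in all cases of Assumption~\ref{ass:Q} (the tightest being $d=3$, $\Gamma=2$), and invoking~\eqref{eq:Sobolev-norm_alpha}. The main technical point of the plan is the simultaneous choice of $\alpha$ and $\gamma_0$ so that the full range $\gamma\in[0,\Gamma)$ is recovered with the sharp exponents of Assumption~\ref{ass:Q}; beyond this bookkeeping, no serious analytical obstacle arises.
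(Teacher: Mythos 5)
Your proposal is correct. Note, however, that the paper does not actually prove this lemma: it is stated without proof, with the $\|\cdot\|_\gamma$-moment bound attributed to the cited references (\cite[Section 2]{CHS21} and \cite[Lemma 3.2]{QCW22}), and only the reduction of the $L^\infty$-bound to the $\|\cdot\|_\gamma$-bound via the Sobolev embedding~\eqref{eq:Sobolev-norm_alpha} is commented on -- exactly the reduction you perform at the end. Your argument supplies the standard complete proof that those references carry out: the It\^o isometry computations are right in both cases (the exponent bookkeeping $\gamma<2$ for trace-class noise and $\gamma-2<-\tfrac12\cdot\tfrac{2}{d}\cdot d$, i.e.\ $\gamma<\tfrac32$ for space-time white noise in $d=1$, matches the values of $\Gamma$ in Assumption~\ref{ass:Q}), the Da Prato--Zabczyk factorization with $\alpha<(\Gamma-\gamma)/4$ correctly converts pointwise Gaussian second moments into moments of the supremum of arbitrary order, and the separate treatment of the mass mode $\langle Z(t),e_0\rangle$ (a real Brownian motion, since $e^{-tA^2}e_0=e_0$) is needed and correctly identified. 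The only point worth making explicit in a write-up is that for the supremum one needs $\sup_{t\le T}|\langle Z(t),e_0\rangle|$, i.e.\ the running maximum of a Brownian motion rather than a fixed-time Gaussian, but this of course has all moments. No gap.
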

The moment bounds in the $L^\infty$ norm (first term in the left-hand side of~\eqref{eq:strong-moment-Zt}) are a straightforward consequence of the moment bounds in the norm $\|\cdot\|_{\gamma}$ (second term in the left-hand side of~\eqref{eq:strong-moment-Zt}) and of the Sobolev inequality~\eqref{eq:Sobolev-norm_alpha}.

We are now in position to state a well-posedness result for mild solutions of the stochastic evolution equation
\begin{equation}\label{eq:SPDE}
dX(t)+A\bigl(AX(t)+F(X(t))\bigr)dt=dW^Q(t)
\end{equation}
with initial value $X(0)=x_0$. To indicate that the initial value of $X(0)$ is equal to $x_0$, the notation $\E_{x_0}[\cdot]$ is used in the sequel.

\begin{propo}\label{propo:wellposed}
Let Assumption~\ref{ass:Q} be satisfied and assume that $x_0\in \HH^\Gamma$. Then the stochastic evolution equation~\eqref{eq:SPDE} admits a unique global mild solution with initial value $X(0)=x_0$, namely a $\HH$-valued continuous stochastic process $\bigl(X(t)\bigr)_{t\ge 0}$ such that for all $t\ge 0$ one has
\begin{equation}\label{eq:mild}
X(t)=e^{-tA^2}x_0-\int_0^t e^{-(t-s)A^2}AF(X(s))ds+\int_0^t e^{-(t-s)A^2}dW^Q(s).
\end{equation}
Moreover, there exists $q\in\N$ such that for all $T\in(0,\infty)$, $m\in\N$ and all $\gamma\in(\Gamma_0,\Gamma)$, one has the moment bound
\begin{equation}\label{eq:momentsX}
\underset{x_0\in \HH^\gamma}\sup~\frac{\bigl(\E_{x_0}[\underset{0\le t\le T}\sup~\|X(t)\|_\gamma^m]\bigr)^{\frac1m}}{(1+\|x_0\|_\gamma^{q})}<\infty
\end{equation}
and the increment bound
\begin{equation}\label{eq:incrementsX}
\underset{x_0\in \HH^\gamma}\sup~\underset{0\le t_1<t_2\le T}\sup~\frac{\bigl(\E_{x_0}[\|X(t_2)-X(t_1)\|^m]\bigr)^{\frac1m}}{|t_2-t_1|^\frac{\gamma}{4}(1+\|x_0\|_\gamma^{q})}<\infty.
\end{equation}
\end{propo}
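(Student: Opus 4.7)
The plan is to use the classical decomposition $X(t) = Y(t) + Z(t)$, where $Z$ is the stochastic convolution defined in~\eqref{eq:Z} with moments controlled by~\eqref{eq:strong-moment-Zt}, and $Y = X - Z$ satisfies the random PDE
\[
\partial_t Y + A^2 Y + A F(Y+Z) = 0, \qquad Y(0) = x_0,
\]
which can be treated \emph{pathwise} for each fixed $\omega$. Local existence and uniqueness of a mild solution on a random interval will be obtained by a standard Banach fixed point argument in $C([0,\tau];\HH^{\gamma})$: the nonlinearity $AF$ loses one derivative, but the smoothing estimate~\eqref{eq:smoothing-HH} provides the missing regularization via the factor $t^{-1/4}$, and the polynomial bound~\eqref{eq:boundF1} together with the algebra property~\eqref{eq:algebra} for $\HH^\gamma$ (recall $\gamma > d/2$, $\gamma \ne 3/2$) give local Lipschitz continuity of $F$ on bounded sets.

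To extend $Y$ globally and to control its moments, I would derive a pathwise $H^{-1}$-type energy estimate. Since $Ae_0=0$, one has $\langle Y(t), e_0 \rangle = \langle x_0, e_0 \rangle$, so only $\PP Y$ is dynamical. Testing the equation for $Y$ with $A^{-1}\PP Y$ yields
\[
\tfrac{1}{2}\tfrac{d}{dt}|A^{-1/2}\PP Y|^{2} + \|\PP Y\|_1^{2} + \langle F(Y+Z), \PP Y \rangle = 0.
\]
Writing $\langle F(Y+Z),\PP Y\rangle = \langle F(Y+Z), Y+Z\rangle - \langle F(Y+Z), Z\rangle - \langle F(Y+Z), e_0\rangle \langle x_0, e_0\rangle$ and using the coercivity estimate~\eqref{eq:ineqF1} on the first term and the perturbation bound~\eqref{eq:ineqF2} on the second (with $y := Y+Z$, $z := -Z$), I absorb the bad terms into $\|Y+Z\|_{L^4}^4$ and get
\[
\tfrac{1}{2}\tfrac{d}{dt}|A^{-1/2}\PP Y|^{2} + \|\PP Y\|_1^{2} + (1-2\epsilon)\|Y+Z\|_{L^4}^{4} \le C_\epsilon\bigl(1 + \|Z\|_{L^4}^{4} + \langle x_0, e_0 \rangle^{2}\bigr).
\]
Gronwall and the interpolation~\eqref{eq:ineq1} then produce pathwise bounds on $\sup_{[0,T]} \|Y(t)\|$ and on $\int_0^T \|Y\|_1^{2} + \|Y\|_{L^4}^{4}\,dt$ in terms of $\|x_0\|$ and functionals of $Z$, which after taking the $m$-th moment and applying~\eqref{eq:strong-moment-Zt} give the $L^2$ moment bound. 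This already rules out finite-time blowup, giving global existence and uniqueness of the mild solution~\eqref{eq:mild}.

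To upgrade to the norm $\|\cdot\|_\gamma$ with $\gamma \in (\Gamma_0, \Gamma)$, I bootstrap on the mild formulation~\eqref{eq:mild}: use~\eqref{eq:smoothing-HH} to write
\[
\|X(t)\|_\gamma \le \|e^{-tA^2} x_0\|_\gamma + \int_0^t \|e^{-(t-s)A^2} A F(X(s))\|_\gamma\,ds + \|Z(t)\|_\gamma,
\]
bounding the drift integrand by $C(1+(t-s)^{-(1+\gamma)/4})(1+\|X(s)\|_{L^4}^{3})$ via the smoothing property and the Sobolev/$L^4$-type estimates~\eqref{eq:L4-d}. The exponent $(1+\gamma)/4 < 1$ since $\gamma < 2$, so the singular kernel is integrable, and iterating on the already-established $L^2$ (and $L^4$) moment bounds via the embedding $\HH^\gamma \hookrightarrow L^\infty$ from~\eqref{eq:Sobolev-norm_alpha} closes the estimate for $\E_{x_0}[\sup_{[0,T]} \|X\|_\gamma^m]$ with a polynomial dependence on $\|x_0\|_\gamma$, proving~\eqref{eq:momentsX}. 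Finally, for the increment bound~\eqref{eq:incrementsX}, I apply the mild formulation between times $t_1$ and $t_2$:
\[
X(t_2) - X(t_1) = \bigl(e^{-(t_2-t_1)A^2}-I\bigr) X(t_1) - \int_{t_1}^{t_2} e^{-(t_2-s)A^2} A F(X(s))\,ds + \int_{t_1}^{t_2} e^{-(t_2-s)A^2}\,dW^Q(s),
\]
and bound the three terms in $L^m(\Omega;\HH)$ using respectively the regularity estimate~\eqref{eq:regul} (giving the rate $|t_2-t_1|^{\gamma/4}$), the smoothing estimate~\eqref{eq:smoothing-HH} combined with~\eqref{eq:momentsX}, and a standard Itô isometry / factorization argument under Assumption~\ref{ass:Q}.

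The main obstacle is the $H^{-1}$ energy estimate: the cubic nonlinearity $AF$ is not one-sided Lipschitz on $\HH$ (unlike the Allen--Cahn case), so one must work with the weaker dual norm $|A^{-1/2}\PP\cdot|$ and carefully split the nonlinear term to exploit~\eqref{eq:ineqF1}--\eqref{eq:ineqF2}; once this pathwise bound is available, the rest of the argument is essentially a bootstrap using the semigroup smoothing, as in the Allen--Cahn setting.
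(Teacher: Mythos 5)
Your overall strategy --- the splitting $X=Y+Z$, a pathwise fixed point for local existence, an $H^{-1}$ energy estimate exploiting \eqref{eq:ineqF1}--\eqref{eq:ineqF2}, and a bootstrap through the mild formulation --- is the route taken in the references the paper cites for this proposition, and it mirrors the paper's own Section~\ref{sec:proofmomentsfull}, where the discrete analogue is proved by the same three-step scheme. However, two steps in the middle of your plan do not close as written. First, the $H^{-1}$ estimate alone does not yield $\sup_{0\le t\le T}\|Y(t)\|$: testing with $A^{-1}\PP Y$ controls $\sup_t|A^{-1/2}\PP Y(t)|^2$ and $\int_0^T\|\PP Y\|_1^2\,dt$, and the interpolation \eqref{eq:ineq1} then bounds $\|Y(t)\|^2$ by a quantity involving $\|\PP Y(t)\|_1$, which is controlled only in $L^2(0,T)$ --- so you obtain $\int_0^T\|Y\|^4\,dt$, not a supremum in time. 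A second energy estimate, obtained by testing with $Y$ itself, is required (Step~1 of the proof of Lemma~\ref{lem:tamed-R}), and it must handle the term $\langle F(Y+Z),AY\rangle$, which is not covered by \eqref{eq:ineqF1}--\eqref{eq:ineqF2}.

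Second, and more seriously, the bootstrap to $\HH^\gamma$ has a gap. You bound the drift integrand by $1+\|X(s)\|_{L^4}^3$, but $\|F(X)\|=\|X^3-X\|_{L^2}$ requires $\|X\|_{L^6}^3$. Measuring $F(X)$ in $L^{4/3}$ instead (so that the $L^4$ norm suffices) costs an extra factor $A^{d/4}$ through the dual Sobolev embedding, and the resulting singular kernel $(t-s)^{-(2+\gamma+d/2)/4}$ is not integrable for $d\in\{2,3\}$ with $\gamma>\Gamma_0=1+\frac{d}{4}$. An intermediate $L^6$ moment bound is therefore indispensable, and it does not follow from the $L^2$/$L^4$ bounds: the paper's approach obtains it by the Gagliardo--Nirenberg inequality \eqref{eq:L6-d1-GN} when $d=1$ and by an estimate on the energy functional $J$ of \eqref{eq:energy} when $d\in\{2,3\}$ (Step~2 of the proof of Lemma~\ref{lem:tamed-R}). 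This dimension-dependent step is the real technical core of the moment bound and is absent from your plan. (A minor point: the smoothing exponent for $\|e^{-(t-s)A^2}AF(X(s))\|_\gamma$ is $(t-s)^{-(2+\gamma)/4}$ rather than $(t-s)^{-(1+\gamma)/4}$; it is still integrable for $\gamma<2$, so this does not affect the conclusion.)
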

The proof of Proposition~\ref{propo:wellposed} is omitted. Under Assumption~\ref{ass:Q}-$(i)$ (space-time white noise case in dimension $d=1$), the strong moment bounds~\eqref{eq:momentsX} follow from~\cite[Proposition 1]{CHS21} for instance. Under Assumption~\ref{ass:Q}-$(ii)$ (trace-class noise case in dimensions $d = 1, 2, 3$),  the strong moment bounds~\eqref{eq:momentsX} follow from~\cite[Theorem 3.7]{QCW22} and \eqref{eq:strong-moment-Zt} for instance. The increment bounds~\eqref{eq:incrementsX} are then obtained by standard techniques. Observe that the Kolmogorov regularity criterion ensures that almost surely the trajectories $t\in[0,T]\mapsto X(t)\in\HH$ are $\frac{\gamma}{4}$-H\"older continuous for all $\gamma\in[0,\Gamma)$.

\section{Numerical methods and convergence results}\label{sec:main}

In this section, we first describe in Section~\ref{sec:Galerkin} the method employed for the spatial discretization: a spectral Galerkin approximation method is introduced. Our first main result is Theorem~\ref{theo:Galerkin}, which gives a weak order of convergence $\Gamma$ in terms of $\lambda_N$ when the approximation parameter $N$ goes to $\infty$ -- whereas the strong order of convergence is known to be equal to $\Gamma/2$, see for instance~\cite{CHS21,QCW22} and~\eqref{eq:strongerrorGalerkin} below. We then describe in Section~\ref{sec:full} the fully discrete method: a tamed exponential Euler scheme is employed for the temporal discretization. Our second main result is Theorem~\ref{theo:tamed}, which gives a weak order of convergence $\Gamma/2$ in terms of the time-step size $\Delta t$ -- whereas the strong order of convergence is known to be equal to $\Gamma/4$, see for instance~\cite{CQW22}), see also~\cite{CHS21} for a similar result for an implicit scheme. When an accelerated exponential Euler scheme is used, there is no approximation error for the contribution of the stochastic convolution and the temporal strong order of convergence may be improved, see~\cite{CHS21}.

\subsection{Spatial discretization}\label{sec:Galerkin}

Let $N\in\N$ be an integer. Introduce the $\HH_N$-valued process $\bigl(X_N(t)\bigr)_{t\ge 0}$ which is the solution of the stochastic evolution equation
\begin{equation}\label{eq:Galerkin}
dX_N(t)+A\bigl(AX_N(t)+P_NF(X_N(t))\bigr)dt=P_NdW^Q(t)
\end{equation}
with initial value $X_N(0)=P_Nx_0$. The process $X_N$ is an approximation of $X$ using the spectral Galerkin method. Note that the noise in the stochastic evolution equation~\eqref{eq:Galerkin} is a Wiener process with covariance operator $P_NQP_N$. If the covariance operator $Q$ and the linear operator $A$ commute (which is not assumed to the case in general), then one has $P_NdW^Q(t)=\sum_{j=0}^{N}\sqrt{q_j}\beta_j(t)e_j$.

Before stating Theorem~\ref{theo:Galerkin}, it is worth mentioning two auxiliary results. First, the results of Proposition~\ref{propo:wellposed} hold for $X_N$, uniformly with respect to $N\in\N$. Precisely, for all $N\in\N$, the stochastic evolution equation~\eqref{eq:Galerkin} admits a unique mild solution with values in $\HH_N$, defined for all times. Moreover, for all $T\in(0,\infty)$, $m\in\N$ and $\gamma\in(\Gamma_0,\Gamma)$, there exists $C_{\gamma,m}(T)\in(0,\infty)$ such that for all $x_0\in\HH^\gamma$ one has the moment bounds
\begin{equation}\label{eq:momentsX-Galerkin}
\underset{N\in\N}\sup~\bigl(\E_{P_Nx_0}[\underset{0\le t\le T}\sup~\|X_N(t)\|_\gamma^m]\bigr)^{\frac1m} \le C_{\gamma,m}(T)(1+\|x_0\|_\gamma^{q}),
\end{equation}
where the integer $q\in\N$ is given by Proposition~\ref{propo:wellposed} (in particular, it does not depend on the spatial approximation parameter $N\in\N$). Similarly, for all $x_0\in\HH^\gamma$ and $0\le t_1\le t_2\le T$, one has the increment bounds
\begin{equation}\label{eq:incrementsX-Galerkin}
\underset{N\in\N}\sup~\bigl(\E_{P_Nx_0}[\|X_N(t_2)-X_N(t_1)\|^m]\bigr)^{\frac1m} \le C_{\gamma,m}(T)|t_2-t_1|^\frac{\gamma}{4}(1+\|x_0\|_\gamma^{q}).
\end{equation}
Second, the strong error between $X_N(T)$ and $X(T)$ converges to $0$ when $N\to\infty$, with order $\Gamma/2$ with respect to $\lambda_N$, in the following sense, see~\cite[Theorem 1]{CHS21} and~\cite[Theorem~3.1]{CQW22}: there exists $q\in\N$ such that, for all $T\in(0,\infty)$, $m\in\N$ and $\gamma\in(\Gamma_0,\Gamma)$, there exists $C_{\gamma,m}(T)\in(0,\infty)$ such that for all $x_0\in\HH^\gamma$ and all $N\in\N$ one has the strong error estimate
\begin{equation}\label{eq:strongerrorGalerkin}
\bigl(\E[\|X_N(T)-X(T)\|^m]\bigr)^{\frac1m}\le C_{\gamma,m}(T)(1+\|x_0\|_\gamma^{q})\lambda_N^{-\frac{\gamma}{2}},
\end{equation}
with initial values $X(0)=x_0$ and $X_N(0)=P_Nx_0$.

We are now in position to state our first main result. Recall that $\vvvert\varphi\vvvert_2$ is given by~\eqref{eq:varphi-notation}.
\begin{theo}\label{theo:Galerkin}
Let Assumption~\ref{ass:Q} be satisfied. There exists $q\in\N$ such that the following holds. For all $T\in(0,\infty)$ and $\gamma\in(\Gamma_0,\Gamma)$, there exists $C_{\gamma}(T)\in(0,\infty)$ such that for all functions $\varphi:\HH\to\R$ of class $\mathcal{C}^2$ with bounded first and second order derivatives, $x_0\in\HH^\gamma$ and $N\in\N$, one has the weak error estimate
\begin{equation}\label{eq:weakerrorGalerkin}
\big|\E[\varphi(X_N(T))]-\E[\varphi(X(T))]\big|\le C_{\gamma}(T)(1+\|x_0\|_\gamma^{q})\vvvert\varphi\vvvert_2\lambda_N^{-\gamma},
\end{equation}
with initial values $X(0)=x_0$ and $X_N(0)=P_Nx_0$.
\end{theo}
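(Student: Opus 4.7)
My plan is to follow the standard Kolmogorov-equation approach to weak error estimates for SPDEs. Define the auxiliary function $u:[0,T]\times\HH^\gamma\to\R$ by $u(t,x)=\mathbb{E}[\varphi(X^x(T-t))]$, where $(X^x(s))_{s\ge0}$ denotes the mild solution of \eqref{eq:SPDE} starting at $x$. Then $u$ satisfies the backward Kolmogorov equation $\partial_t u+\mathcal{L}u=0$ with terminal condition $u(T,\cdot)=\varphi$, for the generator
\begin{equation*}
\mathcal{L}\phi(x) = D\phi(x)\cdot(-A^2x - AF(x))+\tfrac{1}{2}\mathrm{Tr}\bigl(Q\,D^2\phi(x)\bigr)
\end{equation*}
of the exact dynamics. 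The regularity estimates for $Du$ and $D^2u$ provided by Theorem~\ref{theo:Kolmogorov} are the crucial input.

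I would then apply It\^o's formula to $t\mapsto u(t,X_N(t))$ on $[0,T]$, take expectations, and substitute in the backward equation at $X_N(t)$. Since the drift and diffusion of $X_N$ differ from those of $X$ only through the application of $P_N$ to $F$ and to the Wiener process respectively, this produces the error representation
\begin{equation*}
\mathbb{E}[\varphi(X_N(T))]-\mathbb{E}[\varphi(X(T))] = \bigl[u(0,P_Nx_0) - u(0,x_0)\bigr] + I_1 + I_2,
\end{equation*}
with drift error $I_1=\int_0^T\mathbb{E}\bigl[Du(t,X_N(t))\cdot A(I-P_N)F(X_N(t))\bigr]\,dt$ and noise error $I_2=\tfrac{1}{2}\int_0^T\mathbb{E}\bigl[\mathrm{Tr}\bigl((P_NQP_N-Q)D^2u(t,X_N(t))\bigr)\bigr]\,dt$. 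The uniform moment bound \eqref{eq:momentsX-Galerkin} in $\HH^\gamma$ together with the algebra bound \eqref{eq:boundF1} on $F$ then allow each piece to be treated separately.

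The initial-value term and the noise error are the routine pieces. For $u(0,P_Nx_0)-u(0,x_0)$, the mean-value theorem reduces the question to bounding $Du(0,\cdot)\cdot(P_N-I)x_0$, and the duality-type estimate on $Du$ from Theorem~\ref{theo:Kolmogorov} combined with the spectral inequality $\|A^{-\gamma/2}(P_N-I)x_0\|\le\lambda_N^{-\gamma}\|x_0\|_\gamma$ directly yields the target rate. For $I_2$, I would split $P_NQP_N-Q=-(I-P_N)Q-P_NQ(I-P_N)$ and apply the Hilbert--Schmidt form of the bilinear bound on $D^2u$; the factor $\lambda_N^{-\gamma}$ arises by distributing one power of $\lambda_N^{-\gamma/2}$ into each of the two $Q^{1/2}$-factors via the smoothing encoded in Theorem~\ref{theo:Kolmogorov}.

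The main obstacle is the drift error $I_1$, where the unbounded operator $A$ paired with $(I-P_N)F(X_N(t))$ would yield only the strong rate $\lambda_N^{-\gamma/2}$ if combined naively with a pointwise bound on $\|Du\|$. To recover the doubled rate, one must exploit the sharper dual-norm-with-time-singularity bounds of Theorem~\ref{theo:Kolmogorov}, which control $|Du(t,x)\cdot Ah|$ by a sufficiently strong negative-order fractional Sobolev norm of $h$ at the expense of an integrable singularity near $t=T$ coming from the smoothing of $e^{-(T-t)A^2}$. Paired with the spectral estimate $\|A^{-s/2}(I-P_N)\psi\|\le\lambda_N^{-(s+\gamma)/2}\|\psi\|_\gamma$ applied to $\psi=F(X_N(t))$, and with moments of $\|F(X_N(t))\|_\gamma$ controlled via \eqref{eq:boundF1} and \eqref{eq:momentsX-Galerkin}, time integration produces the target bound $|I_1|\le C\lambda_N^{-\gamma}$. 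Any $\varepsilon$-loss in the exponent can be absorbed by taking $\gamma$ slightly smaller than its target, since the statement concerns $\gamma$ in the open interval $(\Gamma_0,\Gamma)$.
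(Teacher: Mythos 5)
Your outline identifies the right mechanism (dual-norm bounds on $Du$ with an integrable time singularity, paired with the spectral estimate for $I-P_N$, choosing $\alpha$ close to $2$ for the drift term), and the treatment of the three error contributions matches the paper's in substance. However, there is a genuine gap in the setup. You work with the transition function $u(t,x)=\E[\varphi(X^x(T-t))]$ of the \emph{exact} infinite-dimensional dynamics, assert that it solves the backward Kolmogorov equation with the generator $\mathcal{L}$ of~\eqref{eq:SPDE}, and invoke Theorem~\ref{theo:Kolmogorov} for bounds on $Du$ and $D^2u$. But Theorem~\ref{theo:Kolmogorov} is stated only for the Galerkin approximations $u_M$ defined in~\eqref{eq:uM}, with $x,h,h_1,h_2\in\HH_M$; it says nothing about the limit object $u$. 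To use your representation you would additionally need to prove that $u(t,\cdot)$ is of class $\mathcal{C}^2$ on a suitable space, that the derivative bounds pass to the limit $M\to\infty$, that the infinite-dimensional Kolmogorov equation holds rigorously for the exact Cahn--Hilliard dynamics (whose nonlinearity $AF$ is not even one-sided Lipschitz in $\HH$), and that It\^o's formula applies to $t\mapsto u(t,X_N(t))$. None of this is established in the paper, and it is not routine here.

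The paper's proof is engineered precisely to avoid this. Using the strong error estimate~\eqref{eq:strongerrorGalerkin} and the Lipschitz continuity of $\varphi$, the weak error is rewritten as a limit over $M$ of $\E[\varphi(X_N(T))]-\E[\varphi(X_M(T))]$, and it suffices to prove the uniform-in-$M$ bound~\eqref{eq:weakerrorGalerkinNM}. One then applies It\^o's formula to $u_M(T-t,X_N(t))$ for $M\ge N$, which is an entirely finite-dimensional computation on $\HH_M$, and the error terms involve the difference of generators $\mathcal{L}_N-\mathcal{L}_M$, i.e. $A(P_M-P_N)F$ and $(P_N-P_M)Q^{1/2}$ in place of your $A(I-P_N)F$ and $(I-P_N)Q^{1/2}$; the spectral estimates then go through uniformly in $M$. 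If you adopt this two-index comparison, the rest of your argument (the choices of $\alpha$ in the two noise regimes, the moment bounds~\eqref{eq:momentsX-Galerkin}, the algebra bound on $F$) carries over essentially verbatim and yields the claimed rate $\lambda_N^{-\gamma}$.
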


The proof of Theorem~\ref{theo:Galerkin} is postponed to Section~\ref{sec:proofGalerkin}, however let us describe the most important arguments. Since the function $\varphi$ is globally Lipschitz continuous, owing to the strong error estimate~\eqref{eq:strongerrorGalerkin}, one has
\[
\big|\E[\varphi(X_N(T))]-\E[\varphi(X(T))]\big|=\underset{M\to\infty}\lim~\big|\E[\varphi(X_N(T))]-\E[\varphi(X_M(T))]\big|,
\]
therefore it suffices to prove the following weak error estimate:
\begin{equation}\label{eq:weakerrorGalerkinNM}
\underset{M\ge N}\sup~\big|\E[\varphi(X_N(T))]-\E[\varphi(X_M(T))]\big|\le C_{\gamma}(T)(1+\|x_0\|_\gamma^{q})\vvvert\varphi\vvvert_2\lambda_N^{-\gamma},
\end{equation}
The weak error appearing in the left-hand side of the inequality ~\eqref{eq:weakerrorGalerkinNM} can be written as
\begin{equation}\label{eq:decompweakerrorGalerkin}
\E[\varphi(X_N(T))]-\E[\varphi(X_M(T))]=\E[u_M(0,X_N(T))]-\E[u_M(T,X_M(0))],
\end{equation}
where $M\ge N$, and the function $u_M$ is defined by
\begin{equation}\label{eq:uM_results}
u_M(t,x)=\E_x[\varphi(X_M(t))]
\end{equation}
for all $t\ge 0$ and $x\in\HH_M$, see equation~\eqref{eq:uM}. Section~\ref{sec:Kolmogorov} below is devoted to the analysis of the regularity properties of the function $u_M$, which is the solution of a Kolmogorov equation associated with the stochastic evolution equation~\eqref{eq:Galerkin} with $N=M$. The delicate point of the analysis is to obtain suitable estimates on the first and second order derivatives of $u_M$ with respect to the variable $x$ which are uniform with respect to $M\in\N$: we refer to Theorem~\ref{theo:Kolmogorov}  and Section~\ref{sec:Kolmogorov} below for such results. The use of It\^o's formula and of the Kolmogorov equation and the application of the suitable regularity estimates for $u_M$ then provide the weak error estimate~\eqref{eq:weakerrorGalerkinNM}, see the details in Section~\ref{sec:proofGalerkin}. The strategy is standard in the literature, however one needs to perform the analysis in Section~\ref{sec:Kolmogorov} which is the main novelty in this work on numerical methods for the stochastic Cahn--Hilliard equation.

\subsection{Full discretization}\label{sec:full}

Let us now introduce the
full discretization which is performed using a tamed exponential Euler scheme, combined with the spectral Galerkin method described in Section~\ref{sec:Galerkin} (with discretization parameter still denoted by $N\in\N$). Let $\Delta t$ denote the time-step size. Without loss of generality, assume that $\Delta t=T/K$ where $T\in(0,\infty)$ is a fixed positive real number, and $K\in\N$ is an integer. For all $k\in\{0,\ldots,K\}$, set $t_k=k\Delta t$. 
The tamed exponential Euler fully discrete scheme we consider is defined as follows: for all $k=0,\ldots,K-1$, set
\begin{equation}\label{eq:tamed}
X_{N,k+1}
=
e^{-\Delta tA^2} X_{N,k}
+( A^2 )^{-1} ( I - e^{-\Delta tA^2} ) \frac {-A P_N F (X_{N,k} ) } { 1 + \Delta t \| P_N F(X_{N, k})\|}
+ e^{-\Delta tA^2} P_N \Delta W_k^Q,
\end{equation}
with initial value $X_{N,0}=P_Nx_0=X_N(0)$, and where
\[
\Delta W_k^Q=W^Q(t_{k+1})-W^Q(t_k).
\]
The taming procedure consists in introducing the factor $\frac{1}{ 1 + \Delta t \| P_N F(X_{N, k})\|}$ in the right-hand side of~\eqref{eq:tamed}. To understand the construction of the scheme, note that one has
\[
( A^2 )^{-1} ( I - e^{-\Delta tA^2} )=\int_{t_k}^{t_{k+1}}e^{-(t_{k+1}-t)A^2}dt.
\]
Before stating Theorem~\ref{theo:tamed}, it is worth mentioning two auxiliary results. First, one has moment bounds for $X_{N,k}$, which are uniform with respect to $N$ and $\Delta t$. Note that the scheme~\eqref{eq:tamed} is explicit, therefore the taming procedure plays a crucial role. Indeed, the moment bounds are not expected to hold if one employs a standard exponential Euler scheme.
\begin{theo}\label{theo:momentbounds-tamed}
Let Assumption~\ref{ass:Q} be satisfied. For all $T\in(0,\infty)$, $m\in\{1,\ldots\}$ and $\gamma\in(\Gamma_0,\Gamma)$, there exists $q\in\N$ and $C_{\gamma,m}(T)\in(0,\infty)$ such that for all $x_0\in\HH^\gamma$, all $N\in\N$ and all $\Delta t=T/K$ with $K\in\N$, one has
\begin{equation}\label{eq:momentbounds-tamed}
\bigl(\E_{P_Nx_0}[\underset{0\le k\le K}\sup~\|X_{N,k}\|_\gamma^m]\bigr)^{\frac1m}\le C_{\gamma,m}(T)(1+\|x_0\|_\gamma^{q}).
\end{equation}
\end{theo}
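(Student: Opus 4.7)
My plan is to decompose the numerical solution into a ``stochastic convolution'' part and a ``deterministic'' part, namely $X_{N,k} = Y_{N,k} + Z_{N,k}$ with
\[
Z_{N,k} = \sum_{j=0}^{k-1} e^{-(k-j)\Delta t A^2} P_N \Delta W_j^Q, \qquad B_{\Delta t} := \int_0^{\Delta t} e^{-sA^2}ds,
\]
so that $Y_{N,k+1} = e^{-\Delta t A^2} Y_{N,k} - B_{\Delta t}\frac{A P_N F(Y_{N,k}+Z_{N,k})}{1+\Delta t\|P_N F(Y_{N,k}+Z_{N,k})\|}$. The moment bounds on $Z_{N,k}$ follow from Gaussian calculus; the moment bounds on $Y_{N,k}$ are obtained by a discrete energy estimate in $H^{-1}$, using that the taming factor is a scalar in $(0,1]$ and the coercivity~\eqref{eq:ineqF1}--\eqref{eq:ineqF2} of the cubic nonlinearity. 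A bootstrap via the mild formulation and the smoothing property~\eqref{eq:smoothing-HH} then promotes $L^p$-type control into the desired $\|\cdot\|_\gamma$-bound.

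\textbf{Step 1: moment bounds for $Z_{N,k}$.} Since $Z_{N,k}$ is a Gaussian vector, by hypercontractivity it is enough to compute second moments. Using the It\^o isometry together with the spectral representation of $e^{-tA^2}$ and the asymptotics $\lambda_j \sim c_d j^{2/d}$, one checks that
\[
\sup_{N,K}\,\E\bigl[\sup_{0\le k\le K}\|Z_{N,k}\|_\gamma^m\bigr] < \infty
\]
for every $\gamma\in[0,\Gamma)$ and every $m\ge 1$, by a computation strictly parallel to that for the continuous stochastic convolution in~\eqref{eq:strong-moment-Zt}. The Sobolev embedding~\eqref{eq:Sobolev-norm_alpha} upgrades this to a uniform $L^\infty$-moment bound.

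\textbf{Step 2: energy estimate for $Y_{N,k}$.} Applying $A^{-1/2}$ to the recursion and using $A^{-1}B_{\Delta t}A = B_{\Delta t}$, I would expand $\|A^{-1/2}Y_{N,k+1}\|^2$ and use the scalar taming factor $\tau_k := (1+\Delta t\|P_N F(X_{N,k})\|)^{-1} \in (0,1]$ to rewrite the cross term as $-2\tau_k\langle e^{-\Delta t A^2} Y_{N,k},\, B_{\Delta t}P_N F(X_{N,k})\rangle$. Expanding $B_{\Delta t}=\Delta t\,I + O(\Delta t^{3/2})$ in the appropriate operator norms, and using~\eqref{eq:ineqF1}--\eqref{eq:ineqF2} to bound $\langle Y_{N,k}, F(Y_{N,k}+Z_{N,k})\rangle$ from below by $(1-\epsilon)\|Y_{N,k}\|_{L^4}^4 - C_\epsilon(1+\|Z_{N,k}\|_{L^4}^4)$, together with the crude estimate $\|B_{\Delta t}A\tilde F_k\|^2 \le \Delta t\,\|P_N F(X_{N,k})\|\cdot\tau_k^{-1}\cdot\tau_k^2 \le \Delta t\,\|P_N F(X_{N,k})\|$, yields a recursion of the form
\[
\|A^{-1/2}Y_{N,k+1}\|^2 + c\,\Delta t\,\tau_k\,\|Y_{N,k}\|_{L^4}^4 \le \|A^{-1/2}Y_{N,k}\|^2 + C\Delta t\bigl(1+\|Z_{N,k}\|_{L^4}^4\bigr).
\]
Summing, taking expectations and using Step 1 gives uniform control of $\E\|Y_{N,k}\|_{H^{-1}}^m$ and of the cumulated $L^4$-in-space quantity, from which uniform moments of $\|X_{N,k}\|_{L^6}$ follow by combining the $H^{-1}$ bound with the interpolation~\eqref{eq:L6-d1-GN} (or by exploiting the Sobolev embedding~\eqref{eq:L6-d}) and the energy functional $J$ introduced in~\eqref{eq:energy} for $d\in\{2,3\}$.

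\textbf{Step 3: bootstrap to $\|\cdot\|_\gamma$ and main obstacle.} Writing the scheme in mild form
\[
X_{N,k} = e^{-t_kA^2}X_{N,0} - \sum_{j=0}^{k-1} e^{-(t_k-t_{j+1})A^2}B_{\Delta t}A\tilde F_j + Z_{N,k},
\]
applying $\|\cdot\|_\gamma$, using the smoothing bound $\|e^{-sA^2}A^{1+\gamma/2}\|_{\mathcal{L}(\HH)} \le C\,s^{-1/2-\gamma/4}$ (which is integrable since $\gamma<\Gamma\le 2$), and controlling $\|\tilde F_j\| \le \|P_N F(X_{N,j})\| \le C(1+\|X_{N,j}\|_{L^6}^3)$ via the moment bounds from Step 2, a discrete convolution/Gronwall argument delivers~\eqref{eq:momentbounds-tamed}; the supremum over $k$ is then obtained by absorbing increments of $Z_{N,k}$ into the estimate. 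The principal obstacle is Step 2: one must arrange the expansion of $\|A^{-1/2}Y_{N,k+1}\|^2$ so that the taming factor $\tau_k$ does not spoil the coercivity coming from~\eqref{eq:ineqF1}, while the positive ``remainder'' $\|B_{\Delta t}A\tilde F_k\|^2$ (which carries a factor $\tau_k^2$ and would otherwise scale badly with $\|F(X_{N,k})\|$) must be absorbed using the factor $\Delta t$ hidden in $B_{\Delta t}A$ and the very definition of $\tau_k$. Handling this in an explicit scheme — without appealing to an implicit dissipation — is precisely what the taming is designed to permit, and it is the technical crux of the proof.
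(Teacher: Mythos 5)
Your overall architecture (split off the discrete stochastic convolution, energy estimates in the $\|A^{-\frac12}\cdot\|$ and $\|\cdot\|$ norms, cumulated $L^4$ control, $L^6$ via Gagliardo--Nirenberg or the functional $J$, then a mild-form bootstrap to $\|\cdot\|_\gamma$) mirrors the cascade used in the paper's proof. However, there is a genuine gap at the technical crux you yourself identify in Step 2, and the paper resolves it with two devices that are absent from your proposal.

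First, the paper does not perform a discrete energy expansion of $\|A^{-\frac12}Y_{N,k+1}\|^2$ at all. It works with the continuous interpolation $\tilde X_N^{\Delta t}$ and splits $\tilde Y_N^{\Delta t}=\tilde R_N^{\Delta t}+\tilde r_N^{\Delta t}$, where $\tilde R_N^{\Delta t}$ solves an exact continuous-time evolution equation (so the continuous chain rule applies and the coercivity~\eqref{eq:ineqF1}--\eqref{eq:ineqF2} can be exploited with no discrete remainders), while $\tilde r_N^{\Delta t}$ collects the taming correction and the error from freezing the argument of $F$ on each subinterval. In your discrete expansion the cross term is $-2\tau_k\langle e^{-\Delta tA^2}\PP Y_{N,k},B_{\Delta t}P_NF(X_{N,k})\rangle$, not $-2\Delta t\langle Y_{N,k},F(X_{N,k})\rangle$; commuting $e^{-\Delta tA^2}$ and $B_{\Delta t}$ past produces terms such as $\langle(e^{-\Delta tA^2}-I)\PP Y_{N,k},F(X_{N,k})\rangle$ that require regularity you do not yet have. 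Moreover your ``crude estimate'' of the quadratic remainder is not correct as stated: one has $\|A^{-\frac12}B_{\Delta t}A\tilde F_k\|^2\le C\Delta t^{\frac32}\tau_k^2\|P_NF(X_{N,k})\|^2$, and the only unconditional information the taming gives is $\tau_k\|P_NF(X_{N,k})\|\le\Delta t^{-1}$, so the remainder is only $O(\Delta t^{-\frac12})$ per step and its sum over $k\le K$ diverges; even the bound $\Delta t\,\|P_NF(X_{N,k})\|$ you aim for would be circular, since $\|F(X_{N,k})\|\sim\|X_{N,k}\|_{L^6}^3$ is not absorbed by the $L^4$ dissipation.

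Second, and decisively, the paper first proves all these estimates only on the localization events $\Omega_{N,k}^{\Delta t}=\{\sup_{0\le\ell\le k}\|X_{N,\ell}\|\le\Delta t^{-\theta}\}$ for a sufficiently small $\theta$; on these events the dangerous contributions become $O(\Delta t^{1-6\theta})$ and are summable. The indicator functions are then removed at the very end by Markov's inequality combined with the crude deterministic bound $\|\tilde X_N^{\Delta t}(t)\|_\gamma\le C\Delta t^{-1}(1+\|x_0\|_\gamma)+\dots$ that the taming provides. Without this localization and removal mechanism (or a substitute for it), your Step 2 cannot be closed, and the bootstrap in Step 3 has nothing to stand on.
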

The proof of Theorem~\ref{theo:momentbounds-tamed} is postponed to Section~\ref{sec:proofmomentsfull}. A similar result has been obtained in a recent preprint \cite[Corollary~3.6]{CQW22}, where a special class of $Q$-Wiener process is chosen to ensure mass preservation of the solution. However, the present work considers a general $Q$-Wiener process and a detailed and slightly different proof is given below for completeness.

Second, the strong error between $X_{N,K}$ and $X_N(T)=X_N(K\Delta t)$ converges to $0$ when $\Delta t\to 0$ ($K\to\infty$), with order $\Gamma/4$, uniformly with respect to $\lambda_N$, in the following sense, see~\cite[Theorem~3.8]{CQW22}: there exists $q\in\N$ such that, for all $T\in(0,\infty)$, $m\in\N$ and $\gamma\in(\Gamma_0,\Gamma)$, there exists $C_{\gamma,m}(T)\in(0,\infty)$ such that for all $x_0\in\HH^\gamma$, $K\in\N$ and all $N\in\N$ one has the strong error estimate
\begin{equation}\label{eq:strongerrortamed}
\bigl(\E[\|X_{N,K}-X_N(T)\|^m]\bigr)^{\frac1m}\le C_{\gamma,m}(T)(1+\|x_0\|_\gamma^{q})\Delta t^{\frac{\gamma}{4}}.
\end{equation}

We are now in position to state the second main result of this article.
\begin{theo}\label{theo:tamed}
Let Assumption~\ref{ass:Q} be satisfied. 
For all $T\in(0,\infty)$, $\gamma\in(\Gamma_0,\Gamma)$ and $x_0\in\HH^\gamma$, there exists $C_{\gamma}(T,\|x_0\|_\gamma)\in(0,\infty)$ such that for all $N\in \mathbb N^+$, all $\Delta t=T/K$ with $K\in\N^+$, and all functions $\varphi:\HH\to\R$ of class $\mathcal{C}^2$ with bounded first and second order derivatives, one has the weak error estimate below for the tamed exponential Euler scheme~\eqref{eq:tamed} ($\Delta t\to 0$ with fixed $N\in\N$):
\begin{equation}\label{eq:weakerrortamed}
\Big|\E[\varphi(X_{N,K})]-\E[\varphi(X_N(T))]\Big|\le C(\gamma,T,\|x_0\|_\gamma)\vvvert\varphi\vvvert_2\Delta t^{\frac{\gamma}{2}}.
\end{equation}
Furthermore, one obtains the following weak error estimate for the fully discrete scheme ($\Delta t\to 0$ and $N\to\infty$): under the same assumptions, one has 
\begin{align}\label{eq:weakerrorfull}
\Big|\E[\varphi(X_{N,K})]-\E[\varphi(X(T))]\Big|\le C(\gamma ,T,\|x_0\|_\gamma)\vvvert\varphi\vvvert_2(\Delta t^{\frac{\gamma}{2}}+\lambda_N^{-\gamma }).
\end{align}
\end{theo}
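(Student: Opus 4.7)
The estimate~\eqref{eq:weakerrorfull} is an immediate consequence of~\eqref{eq:weakerrortamed} and Theorem~\ref{theo:Galerkin} via the triangle inequality, so the plan is to concentrate on the purely temporal statement~\eqref{eq:weakerrortamed} with $N\in\N$ fixed.

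My approach would be the standard Kolmogorov-equation route, applied to the Galerkin approximation rather than to the SPDE itself. Fix $N\in\N$, set $u_N(t,x)=\E_x[\varphi(X_N(t))]$, and observe that $u_N$ solves the Kolmogorov equation associated with~\eqref{eq:Galerkin}; the uniform-in-$N$ regularity bounds on $Du_N$ and $D^2u_N$ that I would need are precisely those supplied by Theorem~\ref{theo:Kolmogorov}. Writing $\E[\varphi(X_{N,K})]-\E[\varphi(X_N(T))]=\E[u_N(0,X_{N,K})]-\E[u_N(T,X_{N,0})]$ and telescoping along the grid reduces the task to bounding each local error $\mathcal{E}_k=\E[u_N(T-t_{k+1},X_{N,k+1})]-\E[u_N(T-t_k,X_{N,k})]$. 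On each interval $[t_k,t_{k+1}]$ I would introduce the continuous interpolation
\[
Y_N(t)=e^{-(t-t_k)A^2}X_{N,k}+\int_{t_k}^{t}e^{-(t-s)A^2}G_k\,ds+e^{-(t-t_k)A^2}P_N(W^Q(t)-W^Q(t_k)),\quad G_k:=\frac{-AP_NF(X_{N,k})}{1+\Delta t\|P_NF(X_{N,k})\|},
\]
which satisfies $Y_N(t_k)=X_{N,k}$, $Y_N(t_{k+1})=X_{N,k+1}$ and $dY_N(t)=[-A^2Y_N(t)+G_k]\,dt+e^{-(t-t_k)A^2}P_N\,dW^Q(t)$. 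Applying It\^o's formula to $u_N(T-t,Y_N(t))$ and cancelling the terms that match the generator of $X_N$ via the Kolmogorov equation then expresses $\mathcal{E}_k$ as the sum of a \emph{nonlinear/taming} contribution $\int_{t_k}^{t_{k+1}}\E[Du_N(T-t,Y_N(t)).(G_k+AP_NF(Y_N(t)))]\,dt$ and a \emph{noise discretization} contribution of trace form, in which $P_NQP_N$ in the Kolmogorov diffusion is replaced by $e^{-(t-t_k)A^2}P_NQP_Ne^{-(t-t_k)A^2}$.

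For the nonlinear/taming part I would split $G_k+AP_NF(Y_N(t))=AP_N(F(X_{N,k})-F(Y_N(t)))+AP_NF(X_{N,k})\frac{\Delta t\|P_NF(X_{N,k})\|}{1+\Delta t\|P_NF(X_{N,k})\|}$. The second summand carries an explicit factor $\Delta t$, so Theorem~\ref{theo:momentbounds-tamed} together with the $A$-regularity of $Du_N$ from Theorem~\ref{theo:Kolmogorov} (obtained by transferring the $A$ onto $Du_N$ rather than onto $F$) yields $O(\Delta t^{1+\gamma/2})$ per step. For the first summand I would Taylor-expand both $F$ and $Du_N$ around $X_{N,k}$, using the decomposition of $Y_N(t)-X_{N,k}$ into a deterministic drift of size $O((t-t_k)^{\gamma/4})$ in $L^m(\Omega)$ plus the centered Brownian term $e^{-(t-t_k)A^2}P_N(W^Q(t)-W^Q(t_k))$. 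The first-order contribution linear in the Brownian increment, paired with $Du_N(T-t,X_{N,k})$, vanishes by adaptedness; the leading nonzero correction comes from pairing $D^2u_N$ with two copies of the Brownian increment, contributing $O((t-t_k)^{\gamma/2})$ via It\^o isometry and the bound on $D^2u_N$. This cancellation of the linear Brownian term is the familiar mechanism that doubles the strong rate $\gamma/4$ into the weak rate $\gamma/2$. The noise part is treated by~\eqref{eq:regul2} in Hilbert--Schmidt duality: the factor $e^{-(t-t_k)A^2}-I$ produces $(t-t_k)^{\gamma/4}$ when composed with $A^{-\gamma/2}$, and the remaining $A^{\gamma/2}$ is absorbed either into the Hilbert--Schmidt norm of $P_NQ^{1/2}$ (trace-class case) or into negative-order bounds on $D^2u_N$ supplied by Theorem~\ref{theo:Kolmogorov} (space-time white noise case). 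Both contributions are $O(\Delta t^{1+\gamma/2})$ per step, and summing over the $K=T/\Delta t$ steps yields the claimed $O(\Delta t^{\gamma/2})$.

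The hard part will be the presence of the unbounded operator $A$ in front of the nonlinearity, reflecting the fourth-order nature of the Cahn--Hilliard operator. Unlike in the Allen--Cahn case one cannot pair $Du_N$ directly with $F(X_{N,k})$; the $A$ must be transferred onto $Du_N$, and similarly in the noise term $D^2u_N$ must be controlled in suitable negative-order seminorms. These are precisely the Cahn--Hilliard-specific regularity estimates that Theorem~\ref{theo:Kolmogorov} provides, based on the $L^2$- and $H^{-1}$-energy estimates mentioned in the introduction. Once those estimates are in hand, the rest is delicate but routine local-error bookkeeping using the moment bounds of Theorem~\ref{theo:momentbounds-tamed} and the smoothing estimates~\eqref{eq:smoothing-HH}--\eqref{eq:regul2}.
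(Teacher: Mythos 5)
Your proposal is correct and follows essentially the same route as the paper: the same telescoping/Kolmogorov decomposition with the continuous interpolation $\tilde X_N^{\Delta t}$, the same splitting into the taming correction, the Taylor expansion of $F$ and $Du_N$ with the conditional-expectation cancellation of the linear Brownian term, and the same treatment of the trace term via~\eqref{eq:regul} together with the negative-order bounds of Theorem~\ref{theo:Kolmogorov} and Lemma~\ref{lem:F}. The only cosmetic difference is that the paper's per-step bounds carry an integrable singularity $\int_{t_k}^{t_{k+1}}(T-t)^{-\beta}\,dt$ rather than a uniform $O(\Delta t^{1+\gamma/2})$, but the summation over $k$ closes in the same way.
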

Note that the weak error estimate~\eqref{eq:weakerrorfull} for the fully discrete scheme is a straightforward consequence of~\eqref{eq:weakerrorGalerkin} and~\eqref{eq:weakerrortamed}. The proof of Theorem~\ref{theo:tamed} is postponed to Section~\ref{sec:prooffull}, however let us describe the most important arguments. The weak error is written as
\begin{equation}\label{eq:decompweakerrortamed}
\E[\varphi(X_{N,K})]-\E[\varphi(X_{N}(T))]=\E[u_N(0,X_{N,K})]-\E[u_N(T,X_{N,0})],
\end{equation}
where $u_N$ is the auxiliary function defined by~\eqref{eq:uM_results}. For any $N\in\N$ and $\Delta t$, let us introduce a continuous-time process $\bigl(\tilde{X}_N^{\Delta t}(t)\bigr)_{0\le t\le T}$, such that $\tilde{X}_N^{\Delta t}(t_k)=X_{N,k}$ for all $k\in\{0,\ldots,K\}$. The process is defined as follows: for all $t\in[t_k,t_{k+1}]$, set
\begin{equation}\label{eq:Xtilde}
\tilde{X}_{N}^{\Delta t}( t )
=
e^{-(t-t_k)A^2}X_{N,k}
+
\int_{t_k}^{t} e^{-(t-s)A^2}\frac {-A P_N F(X_{N,k})}{1 + \Delta t \| P_N F(X_{N,k} )\|} 
\text{d} s
+
e^{-(t-t_k)A^2}P_N ( W^Q (t) - W^Q (t_k) ).
\end{equation}
Using a telescoping sum argument and the auxiliary process, one then has
\begin{align*}
\E[\varphi(X_{N,K})]-\E[\varphi(X_{N}(T))]&=\sum_{k=0}^{K-1}\bigl(\E[u_N(T-t_{k+1},X_{N,k+1})]-\E[u_N(T-t_k,X_{N,k})]\bigr)\\
&=\sum_{k=0}^{K-1}\bigl(\E[u_N(T-t_{k+1},\tilde{X}_{N}^{\Delta t}(t_{k+1}))]-\E[u_N(T-t_k,X_{N}^{\Delta t}(t_k))]\bigr).
\end{align*}
Like for the proof of Theorem~\ref{theo:Galerkin}, the use of It\^o's formula and of the Kolmogorov equation and the application of the suitable regularity estimates for $u_N$ then provide the weak error estimate~\eqref{eq:weakerrortamed}, see the details in Section~\ref{sec:prooffull}. The strategy of the proof described above is standard in the literature, the main novelty in this work is the application of the results on the Kolmogorov equation from Section~\ref{sec:Kolmogorov}.

\begin{rem}
If the cubic nonlinearity $F$ is replaced by a globally Lipschitz continuous mapping of class $\mathcal{C}^2$, with bounded derivatives, one can prove variants of Theorems~\ref{theo:Galerkin} and~\ref{theo:tamed} using similar arguments, and obtain again that the weak order is twice the strong order. In addition, considering the case $F=0$ would show that the strong and weak orders of convergence are optimal. In fact, in the globally Lipschitz case the proof of the moment bounds (Theorem~\ref{theo:momentbounds-tamed}) and of the regularity properties for solutions of Kolmorogov equations (Theorem~\ref{theo:Kolmogorov}) would be substantially simplified. Even if, to the best of our knowledge, this situation has not been treated in the literature, the details are omitted and we only consider the cubic nonlinearity case.
\end{rem}

\section{Regularity results for solutions of Kolmogorov equations}\label{sec:Kolmogorov}

Let $\varphi:\HH\to\R$ be a function of class $\mathcal{C}^2$, with bounded first and second order derivatives. As explained in Sections~\ref{sec:Galerkin} and~\ref{sec:full}, the mappings $u_M$ defined by~\eqref{eq:uM_results}, which are solutions of Kolmogorov equations, play an important role in the proof of weak error estimates studied in this article. The objective of this section is to study the regularity properties of $u_M$ and to prove bounds which are uniform with respect to $M\in\N$. To the best of our knowledge, such results have not been obtained yet for stochastic Cahn--Hilliard equations, and are one of the main novelties of this article.

For all $M\in\N$, define the function $u_M:[0,\infty)\times \HH\to\R$ such that
\begin{equation}\label{eq:uM}
u_M(t,x)=\E_{x}[\varphi(X_M(t))]
\end{equation}
for all $t\ge 0$ and $x\in\HH_M$, where $\bigl(X_M(t)\bigr)_{t\ge 0}$ is the solution of the stochastic evolution equation~\eqref{eq:Galerkin} with initial value $X_M(0)=x$ and $N=M$. We recall that the notation $\E_x[\cdot]$ means that $X_M(0)=x$. For all $M\in\N$ and all $t\ge 0$, the mapping $u_M(t,\cdot)$ is of class $\mathcal{C}^2$ on $\HH_M$, and for all $x,h,h_1,h_2\in\HH_M$, one has
\begin{equation}\label{eq:DuMD2uM}
\left\lbrace
\begin{aligned}
Du_M(t,x).h&=\E_x[D\varphi(X_M(t)).\eta_M^h(t)]\\
D^2u_M(t,x).(h_1,h_2)&=\E_x[D\varphi(X_M(t)).\zeta_M^{h_1,h_2}(t)]+\E_x[D^2\varphi(X_M(t)).(\eta_M^{h_1}(t),\eta_M^{h_2}(t))],
\end{aligned}
\right.
\end{equation}
where the processes $\bigl(\eta_M^h(t)\bigr)_{t\ge 0}$ and $\bigl(\zeta_M^{h_1,h_2}(t)\bigr)_{t\ge 0}$ are the solutions of the first variational equation
\begin{equation}\label{eq:eta}
\frac{d\eta_M^h(t)}{dt}=-A^2\eta_M^h(t)-AP_M\bigl(F'(X_M(t))\eta_M^h(t)\bigr),
\end{equation}
with initial value $\eta_M^h(0)=h$, and of the second variational equation
\begin{equation}\label{eq:zeta}
\frac{d\zeta_M^{h_1,h_2}(t)}{dt}=-A^2\zeta_M^{h_1,h_2}(t)-AP_M\bigl(F'(X_M(t))\zeta_M^{h_1,h_2}(t)\bigr)-AP_M\bigl(F''(X_M(t))\eta_M^{h_1}(t)\eta_M^{h_2}(t)\bigr),
\end{equation}
with initial value $\zeta_M^{h,k}(0)=0$, respectively. Note that $\eta_M^{h}(t)$ and $\zeta_M^{h_1,h_2}(t)$ also depend on the initial value $x=X_M(0)$, however this dependence is omitted to simplify the notation. Since the noise is additive in~\eqref{eq:Galerkin}, $\eta_M^{h}(\cdot)$ and $\zeta_M^{h_1,h_2}(\cdot)$ are solutions of evolution equations which are not driven by a Wiener process.

For all $M\in\N$, the mapping $u_M:[0,T]\times \HH_M\to \R$ is the solution of the Kolmogorov equation (see for instance the monograph~\cite{Cer01})
\begin{equation}\label{eq:Kolmogorov}
\left\lbrace
\begin{aligned}
\partial_tu_M(t,x)&=\mathcal{L}_Mu_M(t,x),\quad (t,x)\in[0,T]\times\HH_M,\\
u_M(0,x)&=\varphi(x),\quad x\in\HH_M,
\end{aligned}
\right.
\end{equation}
where the infinitesimal generator $\mathcal{L}_M$ associated with the stochastic evolution equation~\eqref{eq:Galerkin} (with $N=M$) on $\HH_M$ is given by
\begin{equation}\label{eq:generator}
\mathcal{L}_M\phi(x)=D\phi(x).\bigl(-A^2x-AP_MF(x)\bigr)+\frac12\sum_{j\ge 0}D^2\phi(x).\bigl(P_MQ^{\frac12}e_j,P_MQ^{\frac12}e_j\bigr)
\end{equation}
for all $x\in\HH^M$, if $\phi:\HH_M\to\R$ is of class $\mathcal{C}^2$.

\subsection{Statement of the regularity estimates}

The main challenge is to prove estimates on $Du_M(t,x).h$ and $D^2u_M(t,x).(h_1,h_2)$, with upper bounds independent of $M\in\N$, and which are suitable to obtain the weak error estimates in Theorems~\ref{theo:Galerkin} and~\ref{theo:tamed}. The main result of this section is Theorem~\ref{theo:Kolmogorov} below.
\begin{theo}\label{theo:Kolmogorov}
Let Assumption~\ref{ass:Q} be satisfied. There exists $q\in\N$ such that the following holds. For all $\alpha\in[0,2)$ and all $\alpha_1,\alpha_2\in[0,2)$ such that $\alpha_1+\alpha_2<2$, all $\gamma\in(\Gamma_0,\Gamma)$ and all $T\in(0,\infty)$, there exist positive real numbers $C_{\alpha,\gamma}(T)\in(0,\infty)$ and $C_{\alpha_1,\alpha_2,\gamma}(T)\in(0,\infty)$ such that for all functions $\varphi:\HH\to\R$ of class $\mathcal{C}^2$ with bounded first and second order derivatives, all $M\in\N$, all $x,h,h_1,h_2\in\HH_M$ and all $t\in(0,T]$, one has
\begin{equation}\label{eq:theoKolmogorov}
\begin{aligned}
\frac{|Du_M(t,x).h|}{\bigl(1+\|x\|_\gamma^q\bigr)\vvvert\varphi\vvvert_2}&\le C_{\alpha,\gamma}(T)t^{-\frac{\alpha}{2}}(|A^{-\alpha}\PP h|+|\langle h,e_0\rangle |)\\
\frac{|D^2u_M(t,x).(h_1,h_2)|}{\bigl(1+\|x\|_\gamma^q\bigr)\vvvert\varphi\vvvert_2}&\le C_{\alpha_1,\alpha_2,\gamma}(T)t^{-\frac{\alpha_1+\alpha_2}{2}}(|A^{-\alpha_1}\PP h_1|+|\langle h_1,e_0\rangle |)
(|A^{-\alpha_2}\PP h_2|+|\langle h_2,e_0\rangle |).
\end{aligned}
\end{equation}
\end{theo}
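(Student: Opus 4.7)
The statement reduces, via the integral representations in~\eqref{eq:DuMD2uM}, to pathwise bounds on the variational processes $\eta^h_M$ and $\zeta^{h_1,h_2}_M$ in the $\HH$-norm, with constants depending polynomially on the random quantity $R_\omega:=1+\sup_{s\in[0,T]}\|X_M(s)\|_\gamma$, whose $m$-th moments are bounded uniformly in $M$ by~\eqref{eq:momentsX-Galerkin}. Since $|D\varphi(X_M).\eta|\le \vvvert\varphi\vvvert_2\|\eta\|$ and an analogous bound for $D^2\varphi$, the polynomial factor $1+\|x\|_\gamma^q$ in~\eqref{eq:theoKolmogorov} then follows from H\"older's inequality once the pathwise estimates are established.

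For the first derivative bound, I start from the mild form of~\eqref{eq:eta},
\[
\eta^h(t)=e^{-tA^2}h-\int_0^t e^{-(t-s)A^2}AP_M\bigl(F'(X_M(s))\eta^h(s)\bigr)\,ds.
\]
The free term decomposes as $\langle h,e_0\rangle e_0 + e^{-tA^2}\PP h$, and the spectral bound $\sup_j e^{-t\lambda_j^2}\lambda_j^\alpha\le Ct^{-\alpha/2}$ yields $\|e^{-tA^2}\PP h\|\le C t^{-\alpha/2}|A^{-\alpha}\PP h|$. For the Duhamel integrand I combine the smoothing estimate $\|Ae^{-rA^2}\PP g\|\le C r^{-1/2}\|g\|$ from~\eqref{eq:smoothing-hh} with the pointwise growth $\|F'(X_M)\eta\|\le C(1+\|X_M\|_\gamma^2)\|\eta\|$ from~\eqref{eq:Fderiv}. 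This produces the singular Volterra inequality
\[
\|\eta^h(t)\|\le C t^{-\alpha/2}\bigl(|A^{-\alpha}\PP h|+|\langle h,e_0\rangle|\bigr)+C R_\omega^2\int_0^t(t-s)^{-1/2}\|\eta^h(s)\|\,ds,
\]
which I resolve by a Henry-type argument for Volterra kernels (equivalently, a contraction iteration for $\sup_{s\le t}s^{\alpha/2}\|\eta^h(s)\|$ on subintervals of length $\sim R_\omega^{-4}$) giving a pathwise bound that depends polynomially, and not exponentially, on $R_\omega$.

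For the second derivative the mild form of~\eqref{eq:zeta} is
\[
\zeta(t)=-\int_0^t e^{-(t-s)A^2}AP_M\bigl(F'(X_M)\zeta+F''(X_M)\eta^{h_1}\eta^{h_2}\bigr)(s)\,ds.
\]
The linear piece is absorbed by the same Volterra argument as above. The new ingredient is the forcing term: from~\eqref{eq:Fderiv} one has only the $L^1$-control $\|F''(X_M)\eta^{h_1}\eta^{h_2}\|_{L^1}\le C(1+\|X_M\|_\gamma)\|\eta^{h_1}\|\|\eta^{h_2}\|$, which I pair with the duality bound~\eqref{eq:ineq3} and~\eqref{eq:smoothing-hh} to obtain $\|Ae^{-rA^2}g\|\le C r^{-(2+\gamma)/4}\|g\|_{L^1}$. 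Inserting the first-derivative bound on each $\eta^{h_i}$ reduces the forcing contribution to a Beta integral
\[
\int_0^t(t-s)^{-(2+\gamma)/4}s^{-(\alpha_1+\alpha_2)/2}\,ds=B\bigl(\tfrac{2-\gamma}{4},\,1-\tfrac{\alpha_1+\alpha_2}{2}\bigr)\,t^{(2-\gamma)/4-(\alpha_1+\alpha_2)/2},
\]
which is finite exactly under $\gamma<2$ and $\alpha_1+\alpha_2<2$, and is bounded on $[0,T]$ by $C_T\,t^{-(\alpha_1+\alpha_2)/2}$, producing the required singularity.

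The main obstacle is that $F'$ is neither bounded nor one-sided Lipschitz in $\HH$, so the Volterra-Gronwall step for $\eta^h$ must be carried out pathwise with the random coefficient $\sim\|X_M\|_\gamma^2$; the delicate point is to preserve polynomial dependence on this coefficient (via the short-interval iteration) rather than exponential, since only polynomial moments in $\|x\|_\gamma$ are available and are needed in~\eqref{eq:theoKolmogorov}. A secondary technical subtlety is the borderline smoothing estimate of order $(t-s)^{-(2+\gamma)/4}$ dictated by the $L^1$-only bound on $F''\eta\eta$; it is integrable only for $\gamma<2$, which explains both the bound $\gamma<\Gamma\le 2$ in Assumption~\ref{ass:Q} and the strict inequality $\alpha_1+\alpha_2<2$ in the theorem.
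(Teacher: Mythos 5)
There is a genuine gap at the central step of your argument for the first-derivative bound. You reduce the problem to the singular Volterra inequality
\[
\|\eta^h(t)\|\le C t^{-\alpha/2}\bigl(|A^{-\alpha}\PP h|+|\langle h,e_0\rangle|\bigr)+C R_\omega^2\int_0^t(t-s)^{-1/2}\|\eta^h(s)\|\,ds,
\]
and claim that it can be resolved with a bound that is \emph{polynomial} in $R_\omega$. This is false: the resolvent of the kernel $b\,(t-s)^{-1/2}$ is a Mittag--Leffler function, and the corresponding integral \emph{equation} (a special case of your inequality) has the exact solution $a\,E_{1/2}\bigl(b\Gamma(1/2)t^{1/2}\bigr)\sim a\,e^{\pi b^2 t}$. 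With $b=CR_\omega^2$ this is $\exp(cR_\omega^4T)$, so no Gronwall-type argument applied to this inequality alone can produce a polynomial bound. The short-interval iteration does not rescue this: choosing subintervals of length $\delta\sim R_\omega^{-4}$ makes the self-interaction on each subinterval contractive, but the history term $bR_\omega^2\int_0^{t_j}(t-s)^{-1/2}\|\eta\|\,ds$ carries total weight of order $bR_\omega^2T^{1/2}$, and the number of subintervals is itself $N\sim TR_\omega^4$, so the iteration returns a factor of order $C^{N}$ --- again exponential in $R_\omega^4$. Since only polynomial moments of $\sup_{t\le T}\|X_M(t)\|_\gamma$ are available (inequality~\eqref{eq:momentsX-Galerkin}), and $\E[\exp(cR_\omega^4)]$ is certainly not finite in this setting, the expectation step at the end of your argument cannot be carried out. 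The rest of your proposal (the free-term estimate, the $L^1$/duality treatment of the $F''$ forcing term with the Beta integral, and the role of $\gamma<2$ and $\alpha_1+\alpha_2<2$) matches the paper and is fine, but it all rests on this broken step.

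The fix is to abandon the perturbative Duhamel treatment of the term $AP_M(F'(X_M)\eta)$, which discards the sign structure of $F'(u)=3u^2-1\ge -1$, and instead work with energy estimates, as the paper does for $\tilde\eta_M^h(t,s)=\eta_M^h(t,s)-e^{-(t-s)A^2}h$: an estimate in the $\|A^{-1/2}\cdot\|$ norm, in which the term $3\|X_M\tilde\eta\|^2$ appears with a favorable sign on the left-hand side, combined with an estimate in the $\|\cdot\|$ norm, where the dangerous term $\langle A(F'(X_M)\tilde\eta),\tilde\eta\rangle$ is bounded by $\|X_M\|_{L^\infty}^2\|X_M\tilde\eta\|\,\|A\tilde\eta\|$ and absorbed using precisely the quantity $\int\|X_M\tilde\eta\|^2$ controlled by the first estimate. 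In this way the random coefficient $\sup_t\|X_M(t)\|_\gamma$ enters only multiplicatively (never inside a Gronwall exponent), which is what yields the polynomial dependence needed for Theorem~\ref{theo:Kolmogorov}; the case $\alpha\in(1,2)$ is then obtained from the case $\alpha=1$ via the mild formulation, much as in your second-derivative step.
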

Let us emphasize that the real numbers $C_{\alpha,\gamma}(T)$ and $C_{\alpha_1,\alpha_2,\gamma}(T)$ appearing in the right-hand side of~\eqref{eq:theoKolmogorov} do not depend on $M\in\N$. Taking $\alpha=\alpha_1=\alpha_2=0$ in Theorem~\ref{theo:Kolmogorov} justifies that $u_M(t,\cdot)$ is of class $\mathcal{C}^2$ for all $t\ge 0$, and gives bounds on the first and second order derivatives which are uniform with respect to $M\in\N$. However, for the proof of weak error estimates in Theorems~\ref{theo:Galerkin} and~\ref{theo:tamed}, one requires the inequalities~\eqref{eq:theoKolmogorov} with $\alpha>0$ and $\alpha_1+\alpha_2>0$, more precisely $\alpha$ and $\alpha_1$ need to be chosen arbitrarily close to $2$ (with $\alpha_2=0$) to obtain a weak order of convergence which is twice the strong order. The upper bounds are not uniform with respect to $t\in[0,T]$ since the initial value $u_M(0,\cdot)=\varphi(\PP_M\cdot)$ is not assumed to satisfy this type of regularity estimate. The inequalities~\eqref{eq:theoKolmogorov} thus exhibit a regularization effect, which is due to the smoothing property~\eqref{eq:smoothing-HH}. The condition $\gamma\in(\Gamma_0,\Gamma)$ implies the condition $\gamma\in(\frac{d}{2},\Gamma)\setminus\{\frac32\}$, which is required due to the application of the properties from Section~\ref{sec:auxineq}, and ensures the validity of moment bounds~\eqref{eq:momentsX-Galerkin}.

Theorem~\ref{theo:Kolmogorov} is a variant of results previously obtained for solutions of Kolmogorov associated with parabolic semilinear stochastic evolution equations: see~\cite{Bre14} (additive noise) and~\cite{BD18} (multiplicative noise) in the case of globally Lipschitz nonlinearities, and~\cite{BG18,CHS21a} (Allen--Cahn equation) and~\cite{CH19,Bre22} (dissipative polynomial nonlinearities). In the references above, the spatial approximation parameter $M\in\N$ is sometimes omitted even if an auxiliary spectral Galerkin approximation is applied. We use arguments similar to those employed in the aforementioned references in order first to prove Theorem~\ref{theo:Kolmogorov} and second to apply this result to prove weak error estimates. To the best of our knowledge, Theorem~\ref{theo:Kolmogorov} is the first result in the literature giving regularity results for solutions of Kolmogorov equations associated with stochastic Cahn--Hilliard type equations driven by additive noise. Note that the case of globally Lipschitz nonlinearities $F$ has not been treated in the literature, however one can obtain a version of Theorem~\ref{theo:Kolmogorov} using similar arguments with substantial technical simplifications. Compared with the parabolic semilinear case, note that the power in the singularity $t^{-\frac{\alpha}{2}}$ is  related to the smoothing inequality~\eqref{eq:smoothing-HH}, and is thus different from the singularity appearing in the parabolic case.

Before proceeding with the proof of Theorem~\ref{theo:Kolmogorov}, let us mention that the expression of the inequalities~\eqref{eq:theoKolmogorov} requires to write $h=\PP h+\langle h,e_0\rangle$ and to treat differently the two contributions. Indeed, since $-A$ is the Laplace operator with Neumann boundary conditions, one has $Ae_0=0$, whereas if Dirichlet boundary conditions were considered all the eigenvalues of $A$ would be bounded positive and bounded away from $0$. In the setting considered in this article, $A^{-\alpha}$ for $\alpha>0$ is defined only on the subspace $\hh=\PP(\HH)$, which results in more complicated expressions in the right-hand sides of~\eqref{eq:theoKolmogorov}.

To understand the dependence in the right-hand sides of the inequalities~\eqref{eq:theoKolmogorov} with respect to $\alpha,\alpha_1,\alpha_2$, it is relevant to consider the case $F=0$, and to assume that $Q$ and $A$ commute. Instead of $u_M$, consider the mappings $v_M$ defined by
\[
v_M(t,x)=\E[\varphi(e^{-tA^2}P_Mx+P_MZ(t))],
\]
for all $t\ge 0$ and $x\in\HH_M$, where we recall that $\bigl(Z(t)\bigr)_{t\ge 0}$ is the stochastic convolution defined by~\eqref{eq:Z}. It is then straightforward to check that for all $M\in\N$, $x,h,h_1,h_2\in \HH_M$ and $t\in[0,T]$, one has
\begin{align*}
Dv_M(t,x).h&=\E[D\varphi(e^{-tA^2}P_Mx+P_MZ(t)).e^{-tA^2}h]\\
D^2v_M(t,x).(h_1,h_2)&=\E[D^2\varphi(e^{-tA^2}P_Mx+P_MZ(t)).(e^{-tA^2}h_1,e^{-tA^2}h_2)].
\end{align*}
As a consequence, one obtains the inequalities
\begin{align*}
|Dv_M(t,x).h|&\le \vvvert\varphi\vvvert_2 \|e^{-tA^2}h\|\\
&\le C_\alpha \vvvert\varphi\vvvert_2 t^{-\frac{\alpha}{2}}(|A^{-\alpha}\PP h|+|\langle h,e_0\rangle|)\\
|D^2v_M(t,x).(h_1,h_2)|&\le \vvvert\varphi\vvvert_2 \|e^{-tA^2}h_1\|\|e^{-tA^2}h_2\|\\
&\le C_{\alpha_1,\alpha_2}\vvvert\varphi\vvvert_2 t^{-\frac{\alpha_1+\alpha_2}{2}}(|A^{-\alpha_1}\PP h_1|+|\langle h_1,e_0\rangle |)(|A^{-\alpha_2}\PP h_2|+|\langle h_2,e_0\rangle |)
\end{align*}
as straightforward applications of the smoothing inequality~\eqref{eq:smoothing-HH} associated with the semigroup $\bigl(e^{-tA^2}\bigr)_{t\ge 0}$. Note that in the inequalities for $Dv_M(t,x).h$ and $D^2v_M(t,x).(h_1,h_2)$ one may choose arbitrary $\alpha,\alpha_1,\alpha_2\ge 0$. The study above explains the meaning of the inequalities~\eqref{eq:theoKolmogorov} from Theorem~\ref{theo:Kolmogorov} and why they may be seen as optimal in the treatment of the parameters $\alpha,\alpha_1,\alpha_2$. The conditions $\alpha<2$ and $\alpha_1+\alpha_2<2$ appear due to the presence of a nonlinearity $F$, and the expression $1+\|x\|_\gamma^q$ appears due to the fact that $F$ has polynomial growth.

\subsection{Auxiliary results}

In order to prove Theorem~\ref{theo:Kolmogorov}, it is convenient to introduce the family $\bigl(\Pi_M(t,s)\bigr)_{t\ge s\ge 0}$ of random linear operators on $\HH_M$, which is defined as follows. For all $M\in\N$, $s\ge 0$ and $h\in\HH_M$, $t\in[s,\infty)\mapsto \Pi_M(t,s)h\in\HH_M=\eta_M^h(t,s)$ is the solution of the linear random evolution equation
\begin{equation}\label{eq:PiM}
\frac{d\eta_M^h(t,s)}{dt}=-A^2\eta_M^h(t,s)-AP_M\bigl(F'(X_M(t))\eta_M^h(t,s)\bigr),
\end{equation}
for $t\ge s$, with initial value $\eta_M^h(s,s)=h$. The linear operators $\Pi_M(t,s)$ also depend on the initial value $x\in\HH_M$ of $X_M$. This dependence is omitted to simplify the notation. These two parameter family of linear operators have already been used in \cite{BD18,BG20,CH19} in order to obtain regularity properties of solutions of Kolmorov equations for parabolic semilinear SPDEs.

First, note that the processes $\bigl(\eta_M^h(t)\bigr)_{t\ge 0}$ and $\bigl(\zeta^{h_1,h_2}(t)\bigr)_{t\ge 0}$ which are solutions of~\eqref{eq:eta} and~\eqref{eq:zeta} respectively can be expressed using the linear operators $\Pi_M(t,s)$.
\begin{lemma}\label{lem:Kolmogorov1}
For all $M\in\N$, $x,h,h_1,h_2\in \HH_M$ and $t\ge 0$, one has
\[
\left\lbrace
\begin{aligned}
\eta_M^h(t)&=\Pi_M(t,0)h\\
\zeta_M^{h_1,h_2}(t)&=-\int_0^t\Pi_M(t,s)AP_M\bigl(F''(X_M(s))(\Pi_M(s,0)h_1,\Pi_M(s,0)h_2)\bigr)ds.
\end{aligned}
\right.
\]
\end{lemma}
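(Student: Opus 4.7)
The first identity is a direct consequence of the definitions, and the plan here is simply to compare the evolution equation~\eqref{eq:eta} satisfied by $\eta_M^h$ with the equation~\eqref{eq:PiM} which defines $t \mapsto \Pi_M(t,0)h$. Both are $\HH_M$-valued solutions of the same linear non-autonomous ODE driven pathwise by the trajectory $X_M(\cdot)$, and both have the initial value $h$ at $t=0$. Uniqueness for such finite-dimensional linear systems (with continuous time-dependent coefficients) then immediately gives $\eta_M^h(t) = \Pi_M(t,0)h$.

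For the second identity I would apply the classical variation-of-constants formula. First I would rewrite~\eqref{eq:zeta} in the form
\[
\frac{d\zeta_M^{h_1,h_2}(t)}{dt} = \mathcal{A}_M(t)\zeta_M^{h_1,h_2}(t) + G(t), \qquad \zeta_M^{h_1,h_2}(0)=0,
\]
where $\mathcal{A}_M(t)y = -A^2y - AP_M\bigl(F'(X_M(t))y\bigr)$ is precisely the time-dependent generator of the two-parameter family $\bigl(\Pi_M(t,s)\bigr)_{t \ge s \ge 0}$, and where the forcing term is
\[
G(t) = -AP_M\bigl(F''(X_M(t))\eta_M^{h_1}(t)\eta_M^{h_2}(t)\bigr) = -AP_M\bigl(F''(X_M(t))(\Pi_M(t,0)h_1,\Pi_M(t,0)h_2)\bigr),
\]
using the first part of the lemma to replace the $\eta_M^{h_i}(t)$ by $\Pi_M(t,0)h_i$. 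Since $\Pi_M(\cdot,\cdot)$ is by construction the evolution system associated with $\mathcal{A}_M(\cdot)$, Duhamel's formula on the finite-dimensional subspace $\HH_M$ gives
\[
\zeta_M^{h_1,h_2}(t) = \Pi_M(t,0)\cdot 0 + \int_0^t \Pi_M(t,s)G(s)\,ds,
\]
which is exactly the announced expression after substituting the formula for $G(s)$.

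No step in this argument is genuinely delicate: everything takes place in the finite-dimensional space $\HH_M$, so the relevant ODE theory is completely elementary, and almost surely the coefficients are continuous in $t$ along trajectories of $X_M$, which is enough to invoke the standard existence, uniqueness, and Duhamel results. The only point worth checking carefully is that the homogeneous part of~\eqref{eq:zeta} coincides with the generator appearing in~\eqref{eq:PiM}, which is a matter of inspection. The implicit dependence of $\Pi_M(t,s)$, $\eta_M^h$ and $\zeta_M^{h_1,h_2}$ on the initial value $x$ of $X_M$ is carried silently through $X_M(\cdot)$, consistently with the conventions adopted in the statement.
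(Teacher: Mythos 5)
Your proposal is correct and follows essentially the same route as the paper: the first identity is just the definition of $\Pi_M(\cdot,0)$ plus uniqueness for the linear finite-dimensional system, and your invocation of Duhamel's formula for the second identity is the same argument the paper carries out by directly checking that the candidate integral expression solves~\eqref{eq:zeta} with zero initial value. Nothing is missing.
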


\begin{proof}[Proof of Lemma~\ref{lem:Kolmogorov1}]
The identity $\eta_M^h(t)=\Pi_M(t,0)h$ is a straightforward application of the definition~\eqref{eq:PiM} of the linear operators $\bigl(\Pi_M(t,0)\bigr)_{t\ge 0}$ (with $s=0$).

To check that the second identity holds, it suffices to write the evolution equation satisfied by the right-hand side in terms of $t\ge 0$: it is also solution of~\eqref{eq:zeta} (using the first identity) and vanishes when $t=0$, therefore it is equal to $\zeta_M^{h_1,h_2}(t)$ at all times $t\ge 0$.

The proof of Lemma~\ref{lem:Kolmogorov1} is completed.
\end{proof}

The main technical result in this section, which is instrumental in the proof of Theorem~\ref{theo:Kolmogorov}, is Lemma~\ref{lem:Kolmogorov2}.
\begin{lemma}\label{lem:Kolmogorov2}
For all $\alpha\in[0,2)$, $\gamma\in(\Gamma_0,\Gamma)$ and all $T\in(0,\infty)$, there exists a real number $C_{\alpha,\gamma}(T)\in(0,\infty)$ such that for all $M\in\N$, all $x,h\in\HH_M$, all $0\le s<t\le T$, one has almost surely
\begin{equation}\label{eq:lemK2}
\|\Pi_M(t,s)h\|\le C_{\alpha,\gamma}(T)\bigl(1+\underset{r\in[s,T]}\sup~\|X_M(r)\|_\gamma^5\bigr)(t-s)^{-\frac{\alpha}{2}}(|A^{-\alpha}\PP h|+|\langle h,e_0\rangle|).
\end{equation}
\end{lemma}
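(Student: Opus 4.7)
First observe that since $Ae_0=0$, the projection of~\eqref{eq:PiM} on $e_0$ gives $\langle\eta_M^h(r,s),e_0\rangle=\langle h,e_0\rangle$ for all $r\ge s$, so the zero mode is preserved and it suffices to estimate $\PP\eta_M^h(t,s)$ and add back $\langle h,e_0\rangle e_0$.

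The central step, which produces \emph{polynomial} (rather than exponential) dependence on $M_T:=\sup_{r\in[s,T]}\|X_M(r)\|_\gamma$, is an $H^{-1}$-type energy identity. Testing~\eqref{eq:PiM} against $A^{-1}\PP\eta_M^h(r,s)$ and integrating by parts yields
\[
\tfrac12\tfrac{d}{dr}|A^{-1/2}\PP\eta_M^h|^2+|A^{1/2}\PP\eta_M^h|^2
=-\langle F'(X_M)\eta_M^h,\PP\eta_M^h\rangle.
\]
The crucial point is that $F'(z)=3z^2-1\ge-1$, so writing $\PP\eta_M^h=\eta_M^h-\langle h,e_0\rangle e_0$ and bounding the remaining cross term by $C(1+\|X_M\|_\gamma^2)|\langle h,e_0\rangle|\cdot\|\eta_M^h\|$, then using~\eqref{eq:ineq1} to absorb $\|\eta_M^h\|^2$ into $\tfrac12|A^{1/2}\PP\eta_M^h|^2+C|A^{-1/2}\PP\eta_M^h|^2+\langle h,e_0\rangle^2$, Gronwall's inequality delivers
\begin{equation*}
\sup_{r\in[s,T]}|A^{-1/2}\PP\eta_M^h(r,s)|^2+\int_s^T|A^{1/2}\PP\eta_M^h(r,s)|^2\,dr\le C_T(1+M_T^4)\bigl(|A^{-1/2}\PP h|^2+\langle h,e_0\rangle^2\bigr).
\end{equation*}
In particular, by~\eqref{eq:ineq1}, $\|\eta_M^h(\cdot,s)\|$ is controlled in $L^2([s,T])$ with norm polynomial in $M_T$.

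To transfer this weak pointwise and integrated control into the $L^2$ bound with the singular prefactor $(t-s)^{-\alpha/2}$, I would iterate the mild formulation of~\eqref{eq:PiM} twice:
\begin{align*}
\eta_M^h(t,s)&=e^{-(t-s)A^2}h-\int_s^t e^{-(t-r_1)A^2}AP_MF'(X_M(r_1))e^{-(r_1-s)A^2}h\,dr_1\\
&\quad+\int_s^t\!\!\int_s^{r_1}e^{-(t-r_1)A^2}AP_MF'(X_M(r_1))e^{-(r_1-r_2)A^2}AP_MF'(X_M(r_2))\eta_M^h(r_2,s)\,dr_2\,dr_1.
\end{align*}
The free term is handled by~\eqref{eq:bounds-semigroup} and~\eqref{eq:smoothing-hh}, giving $\|e^{-(t-s)A^2}h\|\le C(t-s)^{-\alpha/2}|A^{-\alpha}\PP h|+|\langle h,e_0\rangle|$. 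The first correction combines $\|Ae^{-uA^2}\PP\|_{\HH\to\HH}\le Cu^{-1/2}$, the $L^\infty$ bound $\|F'(X_M)\|_{L^\infty}\le C(1+\|X_M\|_\gamma^2)$ coming from~\eqref{eq:Sobolev-norm_alpha} (valid because $\gamma>d/2$), and the smoothing~\eqref{eq:smoothing-hh} applied to the inner exponential, producing a Beta-type integral $\int_s^t(t-r_1)^{-1/2}(r_1-s)^{-\alpha/2}\,dr_1=C(t-s)^{1/2-\alpha/2}$. The residual term, after Fubini, reduces to $\int_s^t K(t,r_2)\|F'(X_M(r_2))\eta_M^h(r_2,s)\|\,dr_2$ with an explicit Beta-function kernel $K$; Cauchy--Schwarz in $r_2$ together with the $L^2$-in-time control of $\|\eta_M^h\|$ derived from the energy estimate closes the bound with the required polynomial factor in $M_T$ and allows the right-hand side to be expressed in terms of $|A^{-\alpha}\PP h|+|\langle h,e_0\rangle|$ after trading powers of $(t-s)^{1/2-\alpha/2}$ against $T$.

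The main obstacle throughout is precisely this extraction of polynomial, not exponential, dependence on $M_T$. A direct $L^2$ energy estimate on $\eta_M^h$ would only yield $\|\eta_M^h(t,s)\|\le\exp(C_T(1+M_T^4))\|h\|$, since the nonlinear operator $AP_MF'(X_M)$ admits no pointwise one-sided Lipschitz bound in $L^2$; such super-polynomial growth would be incompatible with the subsequent use of the $m$-th moments~\eqref{eq:momentsX-Galerkin} of $\|X_M\|_\gamma$ in the weak error analysis. Working in $H^{-1}$, where the pointwise sign $F'\ge-1$ translates into the genuine coercivity estimate $-\int F'(X_M)\eta^2\le\|\eta\|^2$, is what saves the argument; the mild formulation then has only to transport this weak control to the strong $L^2$ norm, which costs nothing beyond the expected singular factor $(t-s)^{-\alpha/2}$ permitted by~\eqref{eq:smoothing-hh} for any $\alpha<2$.
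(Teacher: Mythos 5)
Your two structural observations --- that the zero mode $\langle\eta_M^h(\cdot,s),e_0\rangle$ is conserved, and that pairing the equation with $A^{-1}\PP\eta$ turns the pointwise bound $F'\ge-1$ into a coercivity estimate with only \emph{polynomial} dependence on $M_T:=\sup_{r\in[s,T]}\|X_M(r)\|_\gamma$ --- are correct and are exactly the mechanisms the paper exploits. The gap is in how the estimate is anchored in $h$. You run the $H^{-1}$ energy estimate on $\eta_M^h$ itself, whose initial datum is $h$, so Gronwall necessarily returns an $L^2$-in-time bound of the form
\[
\int_s^T\|\eta_M^h(r,s)\|^2\,dr\le C_T(1+M_T^4)\bigl(|A^{-1/2}\PP h|^2+\langle h,e_0\rangle^2\bigr),
\]
and this $|A^{-1/2}\PP h|$ then reappears, undiluted, in the residual term of your twice-iterated Duhamel expansion: Cauchy--Schwarz in $r_2$ transfers the $L^2(s,T)$ norm of $\|\eta_M^h\|$ directly into the final estimate. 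For $\alpha>1/2$ the norm $|A^{-1/2}\PP h|$ is strictly stronger than $|A^{-\alpha}\PP h|$ and cannot be dominated by it; trading powers of $(t-s)$ against $T$ rescales the time singularity but does not change which negative power of $A$ acts on $h$. As written, your argument therefore proves~\eqref{eq:lemK2} only for $\alpha\in[0,1/2]$, whereas the weak error analysis requires $\alpha$ arbitrarily close to $2$. Iterating the Duhamel formula further does not help: every residual still contains $\eta_M^h(r_2,s)$, and the only a priori control available for it is the energy bound anchored at $|A^{-1/2}\PP h|$.

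The missing idea is the paper's first move: subtract the free flow, $\tilde{\eta}_M^h(t,s)=\eta_M^h(t,s)-e^{-(t-s)A^2}h$ as in~\eqref{eq:tildeeta}. Then $\tilde{\eta}_M^h$ has \emph{zero} initial datum and solves~\eqref{eq:tildeeta2} with source $-AP_M\bigl(F'(X_M(t))e^{-(t-s)A^2}h\bigr)$, so the two energy estimates see $h$ only through $\int_s^t\|e^{-(r-s)A^2}h\|^2\,dr\le \tfrac12\|A^{-1}\PP h\|^2+T\langle h,e_0\rangle^2$ --- time integration of the semigroup gains a full power of $A^{-1}$ --- which yields~\eqref{eq:lemK2} for all $\alpha\in[0,1]$ in one stroke (inequality~\eqref{eq:prooflemK2-alpha1}). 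The range $\alpha\in(1,2)$ is then reached by a single Duhamel iteration~\eqref{eq:tildeeta3}, applying the $\alpha=1$ operator bound to $\Pi_M(t,r)$ and a Beta integral $\int_s^t(t-r)^{-1/2}(r-s)^{-\alpha/2}\,dr$. One further remark: the paper keeps the good term $3\int_s^t\|X_M\tilde{\eta}_M^h\|^2\,dr$ produced by the $H^{-1}$ estimate and reuses it to absorb $\|X_M^2\tilde{\eta}_M^h\|^2$ in the $L^2$ estimate; your alternative of bounding $\|F'(X_M)\|_{L^\infty}\le C(1+\|X_M\|_\gamma^2)$ inside the iterated mild formula is acceptable for that particular point, but it does not repair the anchoring problem above.
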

Note that the real numbers $C_{\alpha,\gamma}(T)$ appearing in the statement of Lemma~\ref{lem:Kolmogorov2} are deterministic. Observe that if $F=0$, one has $\Pi_M(t,0)=e^{-tA^2}P_M$ for all $t\ge 0$, therefore the inequality~\eqref{eq:lemK2} may be interpreted as a variant of the smoothing inequality, this is consistent with the discussion above concerning the interpretation of Theorem~\ref{theo:Kolmogorov}.

The proof of Lemma~\ref{lem:Kolmogorov2} is the most delicate contribution of this section. The proof requires two steps: the inequality~\eqref{eq:lemK2} is proved first for $\alpha\in[0,1]$, and then for $\alpha\in(1,2)$. A key argument of the proof is to set
\begin{equation}\label{eq:tildeeta}
\tilde{\eta}_M^h(t,s)=\eta_M^h(t,s)-e^{-(t-s)A^2}h
\end{equation}
for all $M\in\N$, $x,h\in\HH_M$ and $t\ge s\ge 0$. Observe that $(I-\PP)\tilde{\eta}_M^h(t,s)=0$ for all $t\ge s$, and that $\tilde{\eta}_M^h(s,s)=0$. In addition, for all $t\ge s$ one has
\begin{equation}\label{eq:tildeeta2}
\frac{d\tilde{\eta}_M^h(t,s)}{dt}+A^2\tilde{\eta}_M^h(t,s)+AP_M\bigl(F'(X_M(t))\tilde{\eta}_M^h(t,s)\bigr)=-AP_M\bigl(F'(X_M(t))(e^{-(t-s)A^2}h)\bigr).
\end{equation}
A variant of Lemma~\ref{lem:Kolmogorov1} yields the following identity: for all $t\ge s$, one has
\begin{equation}\label{eq:tildeeta3}
\tilde{\eta}_M^h(t,s)=-\int_s^t \Pi_M(r,s)\Bigl(AP_M\bigl(F'(X_M(r))(e^{-(r-s)A^2}h)\bigr)\Bigr)dr.
\end{equation}
Since for all $t\ge s$ one has
\[
\Pi_M(t,s)h=\eta_M^h(t,s)=e^{-(t-s)A^2}h+\tilde{\eta}_M^h(t,s),
\]
owing to the smoothing inequality~\eqref{eq:smoothing-HH} it suffices to prove upper bounds for $\tilde{\eta}_M^h(t,s)$. When $\alpha\in[0,1]$ (first step), the proof of these upper bounds requires to exploit two energy estimates, in the $\|A^{-\frac12}\cdot\|$ and $\|\cdot\|$ norms, using the evolution equation~\eqref{eq:tildeeta2}. When $\alpha\in(1,2)$ (second step), the inequality~\eqref{eq:lemK2} is obtained by combining the identity~\eqref{eq:tildeeta3} and the inequality obtained when $\alpha=1$ (first step).

\begin{proof}[Proof of Lemma~\ref{lem:Kolmogorov2}]
In this proof, the value of $s\ge 0$ is fixed. Let $M\in\N$, and let $x,h\in\HH_M$ also be fixed.

Let us first prove the inequality~\eqref{eq:lemK2} when $\alpha\in[0,1]$. Recall that $\tilde{\eta}_M^h(t,s)\in\hh$ for all $t\ge s$. Due to~\eqref{eq:tildeeta2}, one obtains two energy estimates~\eqref{eq:prooflemK2-energy1} and~\eqref{eq:prooflemK2-energy2} in the $\|A^{-\frac12}\cdot\|$ and $\|\cdot\|$ norms respectively: for all $t\ge s$, one has
\begin{equation}\label{eq:prooflemK2-energy1}
\begin{aligned}
\frac{1}{2}\frac{d\|A^{-\frac12}\tilde{\eta}_M^h(t,s)\|^2}{dt}&+\|A^{\frac12}\tilde{\eta}_M^h(t,s)\|^2+\langle F'(X_M(t))\tilde{\eta}_M^h(t,s),\tilde{\eta}_M^h(t,s)\rangle\\
&=-\langle F'(X_M(t))(e^{-(t-s)A^2}h),\tilde{\eta}_M^h(t,s)\rangle
\end{aligned}
\end{equation}
and
\begin{equation}\label{eq:prooflemK2-energy2}
\begin{aligned}
\frac12\frac{d\|\tilde{\eta}_M^h(t,s)\|^2}{dt}&+\|A\tilde{\eta}_M^h(t,s)\|^2+\langle A(F'(X_M(t))\tilde{\eta}_M^h(t,s)),\tilde{\eta}_M^h(t,s)\rangle\\
&=-\langle A\bigl(F'(X_M(t))(e^{-(t-s)A^2}h)\bigr),\tilde{\eta}_M^h(t,s)\rangle.
\end{aligned}
\end{equation}
To exploit the first energy estimate~\eqref{eq:prooflemK2-energy1}, observe that for all $t\ge s$, one has
\begin{align*}
\langle F'(X_M(t))\tilde{\eta}_M^h(t,s),\tilde{\eta}_M^h(t,s)\rangle&=3\langle X_M(t)^2\tilde{\eta}_M^h(t,s),\tilde{\eta}_M^h(t,s)\rangle-\langle \tilde{\eta}_M^h(t,s),\tilde{\eta}_M^h(t,s)\rangle\\
&=3\|X_M(t)\tilde{\eta}_M^h(t,s)\|^2-\langle\tilde{\eta}_M^h(t,s),\tilde{\eta}_M^h(t,s)\rangle.
\end{align*}
As a consequence, for all $t\ge s$, one obtains
\begin{align*}
\frac{1}{2}\frac{d\|A^{-\frac12}\tilde{\eta}_M^h(t,s)\|^2}{dt}&+\|A^{\frac12}\tilde{\eta}_M^h(t,s)\|^2+3\|X_M(t)\tilde{\eta}_M^h(t,s)\|^2&\\
&\le \|\tilde{\eta}_M^h(t,s)\|^2+\|\tilde{\eta}_M^h(t,s)\|\|F'(X_M(t))(e^{-(t-s)A^2}h)\|\\
&\le \frac{3}{2}\|\tilde{\eta}_M^h(t,s)\|^2+C\|F'(X_M(t))(e^{-(t-s)A^2}h)\|^2
\end{align*}
using the Cauchy--Schwarz and Young inequalities, and the inequality~\eqref{eq:ineq2} in the last step. Using the inequality~\eqref{eq:ineq1} and the property $\tilde{\eta}_M^h(t,s)\in\hh$ for all $t\ge s$, one obtains
\[
\|\tilde{\eta}_M^h(t,s)\|^2\le \|A^{-\frac{1}{2}}\tilde{ \eta}_M^h(t,s)\| \|A^{\frac12}\tilde{\eta}_M^h(t,s)\|\le \frac12\|A^{-\frac12}\tilde{\eta}_M^h(t,s)\|^2+\frac12\|A^{\frac12}\tilde{\eta}_M^h(t,s)\|^2.
\]
Applying Gronwall's lemma, one obtains the following inequality: for all $T\in(0,\infty)$, there exists a deterministic real number $C(T)\in(0,\infty)$ such that for all $0\le s\le t\le T$ one has
\begin{equation}\label{eq:prooflemK2-1}
\begin{aligned}
\|A^{-\frac12}\tilde{\eta}_M^h(t,s)\|^2&+\int_s^t\|A^{\frac12}\tilde{\eta}_M^h(r,s)\|^2dr+\int_s^t\|X_M(r)\tilde{\eta}_M^h(r,s)\|^2dr\\
&\le C(T)\int_s^t\|F'(X_M(r))(e^{-(r-s)A^2}h)\|^2dr.
\end{aligned}
\end{equation}
The second energy estimate~\eqref{eq:prooflemK2-energy2} is treated as follows: using the facts that $A$ is self-adjoint and that $Ae_0=0$ and the Cauchy--Schwarz and Young inequalities, for all $t\ge s$, one has
\begin{align*}
-\langle A\bigl(F'(X_M(t))\tilde{\eta}_M^h(t,s)\bigr)&,\tilde{\eta}_M^h(t,s)\rangle=-\langle F'(X_M(t))\tilde{\eta}_M^h(t,s),A\tilde{\eta}_M^h(t,s)\rangle\\
&=-3\langle X_M(t)^2\tilde{\eta}_M^h(t,s),A\tilde{\eta}_M^h(t,s)\rangle+\langle \tilde{\eta}_M^h(t,s),A\tilde{\eta}_M^h(t,s)\rangle\\
&\le 3\|X_M(t)^2\tilde{\eta}_M^h(t,s)\| \|A\tilde{\eta}_M^h(t,s)\|+\|A^{\frac12}\tilde{\eta}_M^h(t,s)\|^2\\
&\le \frac{9}{2}\|X_M(t)^2\tilde{\eta}_M^h(t,s)\|^2+\frac12\|A\tilde{\eta}_M^h(t,s)\|^2+\|A^{\frac12}\tilde{ \eta}_M^h(t,s)\|^2.
\end{align*}
In addition, one has
\begin{align*}
-\langle A\bigl(F'(X_M(t))(e^{-(t-s)A^2}h)\bigr),\tilde{\eta}_M^h(t,s)\rangle&=-\langle \bigl(F'(X_M(t))(e^{-(t-s)A^2}h)\bigr),A\tilde{\eta}_M^h(t,s)\rangle\\
&\le \|A\tilde{\eta}_M^h(t,s)\|\|F'(X_M(t))(e^{-(t-s)A^2}h)\|\\
&\le \frac12\|A\tilde{\eta}_M^h(t,s)\|^2+C\|F'(X_M(t))(e^{-(t-s)A^2}h)\|^2.
\end{align*}

Using the Sobolev inequality~\eqref{eq:Sobolev-norm_alpha} under the condition $\gamma\in(\frac{d}{2},\Gamma)\setminus\{\frac32\}$,
one obtains
\[
\|X_M(t)^2\tilde{\eta}_M^h(t,s)\|^2\le \|X_M(t)\|_{L^\infty}^2\|X_M(t)\tilde{\eta}_M^h(t,s)\|^2\le C_\gamma \|X_M(t)\|_\gamma^2 \|X_M(t)\tilde{\eta}_M^h(t,s)\|^2.
\]
Therefore, one obtains for all $t\ge s$
\begin{align*}
\frac12\frac{d\|\tilde{\eta}_M^h(t,s)\|^2}{dt}&\le C_\gamma \|X_M(t)\|_\gamma^2 \|X_M(t)\tilde{\eta}_M^h(t,s)\|^2
+
\|A^{\frac12}\tilde{ \eta}_M^h(t,s)\|^2
\\
& +
C\|F'(X_M(t))(e^{-(t-s)A^2}h)\|^2
.
\end{align*}
Applying Gronwall's lemma, one obtains the following inequality: for all $T\in(0,\infty)$ and $\gamma\in(\Gamma_0,\Gamma)$, there exists a deterministic real number $C_\gamma(T)\in(0,\infty)$ such that for all $0\le s\le t\le T$, one has
\begin{equation}\label{eq:prooflemK2-2}
\begin{aligned}
\|\tilde{\eta}_M^h(t,s)\|^2&\le C_\gamma(T)\Bigl(\underset{r\in[s,T]}\sup~\|X_M(r)\|_\gamma^{2}\int_s^t\|X_M(r)\tilde{\eta}_M^h(r,s)\|^2dr+\int_s^t\|A^{\frac12}\tilde{\eta}_M^h(r,s)\|^2dr\Bigr)\\
&+C_\gamma(T)\int_s^t\|F'(X_M(r))(e^{-(r-s)A^2}h)\|^2dr.
\end{aligned}
\end{equation}
Using the smoothing inequality~\eqref{eq:smoothing-HH} and the Sobolev inequality~\eqref{eq:Sobolev-norm_alpha}, one obtains the upper bound
\begin{align*}
\int_s^t\|F'(X_M(r))(e^{-(r-s)A^2}h)\|^2dr&\le C_\gamma\bigl(1+\underset{r\in[s,T]}\sup~\|X_M(r)\|_\gamma^4\bigr)\int_s^t \|e^{-(r-s)A^2}h\|^2dr\\
&\le C_\gamma\bigl(1+\underset{r\in[s,T]}\sup~\|X_M(r)\|_\gamma^4\bigr)(\|A^{-1}\PP h\|^2+\langle h,e_0\rangle^2)\\
&\le C_\gamma\bigl(1+\underset{r\in[s,T]}\sup~\|X_M(r)\|_\gamma^4\bigr)(\|A^{-\alpha}\PP h\|^2+\langle h,e_0\rangle^2),
\end{align*}
using the condition $\alpha\in[0,1]$ in the last step. Therefore, combining the inequalities~\eqref{eq:prooflemK2-1} and~\eqref{eq:prooflemK2-2} with the upper bound above, one obtains the following inequality: for all $T\in(0,\infty)$, $\alpha\in[0,1]$ and $\gamma\in(\Gamma_0,\Gamma)$, there exists a deterministic real number $C_\gamma(T)\in(0,\infty)$ such that for all $0\le s\le t\le T$ one has
\begin{equation}\label{eq:prooflemK2-alpha1aux}
\|\tilde{\eta}_M^h(t,s)\|^2\le C_\gamma(T)\bigl(1+\underset{r\in[s,T]}\sup~\|X_M(r)\|_\gamma^6\bigr)\bigl(\|A^{-\alpha}\PP h\|^2+\langle h,e_0\rangle^2\bigr).
\end{equation}
Since $\eta_M^h(t,s)=\tilde{\eta}_M^h(t,s)+e^{-(t-s)A^2}h$, combining the inequality~\eqref{eq:prooflemK2-alpha1aux} and the smoothing inequality~\eqref{eq:smoothing-HH} then provides the inequality~\eqref{eq:lemK2} when $\alpha\in[0,1]$ (with a power $3$ instead of $5$ for $\|X_M(r)\|_\gamma$ in the right-hand side): for all $T\in(0,\infty)$, $\alpha\in(0,1]$ and $\gamma\in(\Gamma_0,\Gamma)$, there exists a deterministic real number $C_{\alpha,\gamma}(T)\in(0,\infty)$ such that for all $0\le s<t\le T$ one has
\begin{equation}\label{eq:prooflemK2-alpha1}
\|\eta_M^h(t,s)\|\le C_\gamma(T)(t-s)^{-\frac{\alpha}{2}}\bigl(1+\underset{r\in[s,T]}\sup~\|X_M(r)\|_\gamma^3\bigr)\bigl(\|A^{-\alpha}\PP h\|+|\langle h,e_0\rangle|\bigr).
\end{equation}

It remains to deal with the case $\alpha\in(1,2)$. Using the fact that $\tilde{\eta}_M^h(t,s)$ defined by~\eqref{eq:tildeeta} is solution of~\eqref{eq:tildeeta2} and the definition~\eqref{eq:PiM} of the random linear operators $\Pi_M(t,s)$, one obtains the expression~\eqref{eq:tildeeta3} for $\tilde{\eta}_M^h(t,s)$. Using the inequality~\eqref{eq:prooflemK2-alpha1} with $\alpha=1$, one then obtains for all $t\ge s$
\begin{align*}
\|\tilde{\eta}_M^h(t,s)\|&\le C_{1,\gamma}(T)\bigl(1+ \underset{r\in[s,T]}\sup~\|X_M(r)\|_\gamma^3 \bigr)\int_s^t (t-s)^{-\frac12}\|\bigl(F'(X_M(r))e^{-(r-s)A^2}h\bigr)\|dr\\
&\le C_{1,\gamma}(T)\bigl(1+
\underset{r\in[s,T]}\sup~\|X_M(r)\|_\gamma^5\bigr)\int_s^t (t-s)^{-\frac12}\|e^{-(r-s)A^2}h\|dr.
\end{align*}
Using the smoothing inequality~\eqref{eq:smoothing-HH} and the condition $\alpha\in(1,2)$, one obtains the inequalities
\begin{align*}
\int_s^t (t-s)^{-\frac12}\|e^{-(r-s)A^2}h\|dr&\le \int_s^t (t-s)^{-\frac12}(r-s)^{-\frac{\alpha}{2}}dr (\|A^{-\alpha}\PP h\|+\langle  h,e_0 \rangle)\\
&\le C_\alpha(T)\bigl(\|A^{-\alpha}\PP h\|+|\langle  h,e_0 \rangle|\bigr).
\end{align*}
As a consequence, using the identity $\eta_M^h(t,s)=\tilde{\eta}_M^h(t,s)+e^{-(t-s)A^2}h$, the inequality above and the smoothing inequality, one obtains 
the inequality~\eqref{eq:lemK2} when $\alpha\in(1,2)$: for all $T\in(0,\infty)$, $\alpha\in(1,2)$ and $\gamma\in(\Gamma_0,\Gamma)$, there exists a deterministic real number $C_{\alpha,\gamma}(T)\in(0,\infty)$ such that for all $0\le s<t\le T$ one has
\begin{equation}\label{eq:prooflemK2-alpha2}
\|\eta_M^h(t,s)\|\le C_\gamma(T)(t-s)^{-\frac{\alpha}{2}}\bigl(1+\underset{r\in[s,T]}\sup~\|X_M(r)\|_\gamma^5\bigr)\bigl(\|A^{-\alpha}\PP h\|+|\langle h,e_0\rangle|\bigr).
\end{equation}
Gathering the inequalities~\eqref{eq:prooflemK2-alpha1} (first step, $\alpha\in[0,1]$) and~\eqref{eq:prooflemK2-alpha2} (second step, $\alpha\in(1,2]$) then concludes the proof of Lemma~\ref{lem:Kolmogorov2}.
\end{proof}

\subsection{Proof of Theorem~\ref{theo:Kolmogorov}}

The proof of Theorem~\ref{theo:Kolmogorov} is a straightforward consequence of Lemmas~\ref{lem:Kolmogorov1} and~\ref{lem:Kolmogorov2}. The value of the integer $q$ appearing in the regularity estimates~\eqref{eq:theoKolmogorov} is not important to obtain the main results of this article: it may be possible to identify values of $q$ depending on the right-hand side of the moment bounds on the solution and of the inequality~\eqref{eq:lemK2} from Lemma~\ref{lem:Kolmogorov2}, however this is omitted to simplify the notation.

\begin{proof}[Proof of Theorem~\ref{theo:Kolmogorov}]
Let us first prove the upper bound for $Du_M(t,x).h$. Using the first equality in~\eqref{eq:DuMD2uM} and the first identity of Lemma~\ref{lem:Kolmogorov1}, one obtains the following inequality: for all $M\in\N$, $x,h\in\HH_M$ and $t\in[0,T]$, one has
\begin{align*}
\big|Du_M(t,x).h\big|&=\big|\E_x[D\varphi(X_M(t)).\eta_M^h(t)]\big|\\
&=\big|\E_x[D\varphi(X_M(t)).\Pi_M(t,0)h]\big|\\
&\le \vvvert\varphi\vvvert_2 \E_x[\|\Pi_M(t,0)h\|].
\end{align*}
Using the inequality~\eqref{eq:lemK2} from Lemma~\ref{lem:Kolmogorov2} and the moment bounds~\eqref{eq:momentsX-Galerkin}, one then obtains the following inequalities: for all $T\in(0,\infty)$, $\alpha\in[0,2)$ and $\gamma\in(\Gamma_0,\Gamma)$, there exists a real number $C_{\alpha,\gamma}(T)\in(0,\infty)$ such that for all $t\in(0,T]$ one has
\begin{align*}
\frac{\big|Du_M(t,x).h\big|}{(\|A^{-\alpha}\PP h\|+|\langle h, e_0\rangle| )}&\le C_{\alpha,\gamma}(T)\vvvert\varphi\vvvert_2 t^{-\frac{\alpha}{2}}\bigl(1+\E_x[\underset{r\in[0,T]}\sup~\|X_M(r)\|_\gamma^5]\bigr)\\
&\le C_{\alpha,\gamma}(T)\vvvert\varphi\vvvert_2 t^{-\frac{\alpha}{2}}\bigl(1+\|x\|_\gamma^{5q}\bigr).
\end{align*}
This yields the first inequality in~\eqref{eq:theoKolmogorov}. It remains to prove the second inequality. Recall that $D^2u_M(t,x).(h_1,h_2)$ is written as the sum of two terms, owing to the second equality in~\eqref{eq:DuMD2uM}. On the one hand, using the first identity of Lemma~\ref{lem:Kolmogorov1}, one obtains the following inequality: for all $M\in\N$, $x,h_1,h_2\in\HH_M$ and $t\in[0,T]$, one has
\begin{align*}
\big|\E_x[D^2\varphi(X_M(t)).(\eta_M^{h_1}(t),\eta_M^{h_2}(t))]\big|&=\big|\E_x[D^2\varphi(X_M(t)).(\Pi_M(t,0)h_1,\Pi_M(t,0)h_2)]\big|\\
&\le \vvvert\varphi\vvvert_2\bigl(\E_x[\|\Pi_M(t,0)h_1\|^2]\bigr)^{\frac12}\bigl(\E_x[\|\Pi_M(t,0)h_2\|^2]\bigr)^{\frac12},
\end{align*}
using the Cauchy--Schwarz inequality. Using the inequality~\eqref{eq:lemK2} from Lemma~\ref{lem:Kolmogorov2} and the moment bounds~\eqref{eq:momentsX-Galerkin}, one then obtains the following inequalities: for all $T\in(0,\infty)$, $\alpha_1,\alpha_2\in[0,2)$, with $\alpha_1+\alpha_2<2$ and $\gamma\in(\Gamma_0,\Gamma)$, there exists a real number $C_{\alpha_1,\alpha_2,\gamma}(T)\in(0,\infty)$ such that for all $t\in(0,T]$ one has
\begin{align*}
\frac{\big|\E_x[D^2\varphi(X_M(t)).(\eta_M^{h_1}(t),\eta_M^{h_2}(t))]\big|}{(\|A^{-\alpha_1}\PP h_1\|+|\langle h_1, e_0\rangle| )(\|A^{-\alpha_2}\PP h_2\|+|\langle h_2, e_0\rangle| )}\le C_{\alpha_1,\alpha_2,\gamma}(T)t^{-\frac{\alpha_1+\alpha_2}{2}}\vvvert\varphi\vvvert_2\bigl(1+\|x\|_\gamma^{10q}\bigr).
\end{align*}
On the other hand, using the second identity of Lemma~\ref{lem:Kolmogorov1}, one obtains the following inequality: for all $M\in\N$, $x,h_1,h_2\in\HH_M$ and $t\in[0,T]$, one has
\begin{align*}
\big|\E_x[D\varphi(X_M(t))&.\zeta_M^{h_1,h_2}(t)]\big|\le \vvvert\varphi\vvvert_2 \E_x[\|\zeta_M^{h_1,h_2}(t)\|]\\
&\le \vvvert\varphi\vvvert_2\int_0^t\E_x[\|\Pi_M(t,s)AP_M\bigl(F''(X_M(s))(\Pi_M(s,0)h_1,\Pi_M(s,0)h_2)\bigr)\|]ds.
\end{align*}
Recall that $\gamma\in(\frac{d}{2},\Gamma)\setminus\{\frac32\}$. Using the inequality~\eqref{eq:lemK2} from Lemma~\ref{lem:Kolmogorov2} with $\alpha=1+\frac{\gamma}{2}$, one obtains
\begin{align*}
&\big|\E_x[D\varphi(X_M(t)).\zeta_M^{h_1,h_2}(t)]\big|\\
&\le C_\gamma(T)\vvvert\varphi\vvvert_2\int_0^t(t-s)^{-\frac{\gamma}{4}-\frac12}\E_x[\bigl(1+\|X_M(s)\|_\gamma^5)\|\bigr)\|A^{-\frac{\gamma}{2}}\PP\bigl(F''(X_M(s))(\Pi_M(s,0)h_1,\Pi_M(s,0)h_2)\bigr)\|]ds\\
&\le C_\gamma(T)\vvvert\varphi\vvvert_2\int_0^t(t-s)^{-\frac{\gamma}{4}-\frac12}\E_x[\bigl(1+\|X_M(s)\|_\gamma^5)\|\bigr)\|\bigl(F''(X_M(s))(\Pi_M(s,0)h_1,\Pi_M(s,0)h_2)\bigr)\|_{L^1}]ds\\
&\le C_\gamma(T)\vvvert\varphi\vvvert_2\int_0^t(t-s)^{-\frac{\gamma}{4}-\frac12}\E_x[\bigl(1+\|X_M(s)\|_\gamma^6)\bigr)\|\Pi_M(s,0)h_1\|\|\Pi_M(s,0)h_2\|_{L^1}]ds.
\end{align*}
Using the inequality~\eqref{eq:lemK2} from Lemma~\ref{lem:Kolmogorov2} (with $\alpha_1,\alpha_2\in[0,2)$, and the condition $\alpha_1+\alpha_2<2$ to ensure integrability), the moment bounds~\eqref{eq:momentsX-Galerkin} and H\"older's inequality, one obtains the following inequalities: for all $T\in(0,\infty)$, $\alpha_1,\alpha_2\in[0,2)$, with $\alpha_1+\alpha_2<2$ and $\gamma\in(\Gamma_0,\Gamma)$, there exists a real number $C_{\alpha_1,\alpha_2,\gamma}(T)\in(0,\infty)$ such that for all $t\in(0,T]$ one has
\begin{align*}
&\frac{\big|\E_x[D\varphi(X_M(t)).\zeta_M^{h_1,h_2}(t)]\big|}{(\|A^{-\alpha_1}\PP h_1\|+|\langle h_1, e_0\rangle| )(\|A^{-\alpha_2}\PP h_2\|+|\langle h_2, e_0\rangle| )}\\
&\le C_{\alpha_1,\alpha_2,\gamma}(T)\vvvert\varphi\vvvert_2\int_{0}^{t}(t-s)^{-\frac{\gamma}{4}-\frac12}s^{-\frac{\alpha_1+\alpha_2}{2}}ds \bigl(1+\|x\|_\gamma^{16q}\bigr)\\
&\le C_{\alpha_1,\alpha_2,\gamma}(T)\vvvert\varphi\vvvert_2t^{-\frac \gamma 4+\frac 12-\frac {\alpha_1+\alpha_2}2}\bigl(1+\|x\|_\gamma^{16q}\bigr)\\
&\le C_{\alpha_1,\alpha_2,\gamma}(T)\vvvert\varphi\vvvert_2t^{-\frac {\alpha_1+\alpha_2}2}\bigl(1+\|x\|_\gamma^{16q}\bigr),
\end{align*}
using the condition $\gamma<\Gamma\le 2$ in the last step.

Gathering the estimates for the two expressions appearing in the right-hand side of the second identity of~\eqref{eq:DuMD2uM} then yields the second inequality in~\eqref{eq:theoKolmogorov}. This concludes the proof of Theorem~\eqref{eq:theoKolmogorov}.
\end{proof}

\section{Weak error estimates for the spatial Galerkin method}\label{sec:proofGalerkin}

The objective of this section is to prove Theorem~\ref{theo:Galerkin}. The proof is based on a standard approach described in Section~\ref{sec:Galerkin} the weak error is written as~\eqref{eq:decompweakerrorGalerkin} using the solutions $u_M$ of Kolmogorov equations~\eqref{eq:Kolmogorov}, and relevant error terms are identified using It\^o's formula (see below). Finally, upper bounds for the error terms are obtained applying the regularity estimates from Theorem~\ref{theo:Kolmogorov}. Recall that two cases are considered, see Assumption~\ref{ass:Q}: space-time white noise ($d=1$, $\Gamma=3/2$) and trace-class noise ($d\in\{1,2,3\}$, $\Gamma=2$). The general strategy of the proof is the same in the two cases, however the upper bounds are proved using different arguments.

\begin{proof}[Proof of Theorem~\ref{theo:Galerkin}]
Owing to the discussion in Section~\ref{sec:Galerkin}, in order to prove the weak error estimate~\eqref{eq:weakerrorGalerkin}, it suffices to prove the upper bound~\eqref{eq:weakerrorGalerkinNM}.

Recall that the condition $\gamma\in(\Gamma_0,\Gamma)$ is satisfied. Let $T\in(0,\infty)$ and $x_0\in\HH^\gamma$. In addition, let $\varphi:H\to\R$ be a function of class $\mathcal{C}^2$ with bounded first and second order derivatives. Recall that $\bigl(X_N(t)\bigr)_{t\ge 0}$ is defined by~\eqref{eq:Galerkin} for all $N\in\N$. Moreover, for all $M\in\N$, recall that the function $u_M:[0,\infty)\times\HH_M\to\R$ is defined by~\eqref{eq:uM}. Owing to~\eqref{eq:decompweakerrorGalerkin}, for all $M\ge N$, one has
\begin{align*}
\E[\varphi(X_N(T))]-\E[\varphi(X_M(T))]&=\E[u_M(0,X_N(T))]-\E[u_M(T,X_M(0))]\\
&=u_M(T,P_Nx_0)-u_M(T,P_Mx_0)\\
&+\E[u_M(0,X_N(T))]-\E[u_M(T,X_N(0))].
\end{align*}
Using It\^o's formula, the fact that $u_M$ is solution of the Kolmogorov equation~\eqref{eq:Kolmogorov} and the definition~\eqref{eq:generator} of the infinitesimal generator $\mathcal{L}_N$ associated with~\eqref{eq:Galerkin}, one obtains
\begin{align*}
\E[u_M(0,X_N(T))]&-\E[u_M(T,X_N(0))]
=
\int_0^T
\E[\bigl(\mathcal{L}_N-\partial_t\bigr)u_M(T-t,X_N(t))]dt\\
&
=
\int_0^T
\E[\bigl(\mathcal{L}_N-\mathcal{L}_M\bigr)u_M(T-t,X_N(t))]dt\\
&=\int_0^T\E\bigl[Du_M(T-t,X_N(t)).\bigl(A(P_M-P_N)F(X_N(t))\bigr)\bigr]dt\\
&+\frac12\int_0^T\E[\bigl[\sum_{j\ge 0}D^2u_M(T-t,X_N(t)).\bigl((P_N-P_M)Q^{\frac12}e_j,(P_N+P_M)Q^{\frac12}e_j\bigr)\bigr]dt.
\end{align*}
As a consequence, for all $M\ge N$, one has the decomposition
\begin{equation}\label{eq:decompweakerrorGalerkinProof}
\E[\varphi(X_N(T))]-\E[\varphi(X_M(T))]=e_{N,M,0}+e_{N,M,1}+e_{N,M,2},
\end{equation}
where the error terms are given by
\begin{align*}
e_{N,M,0}&=u_M(T,P_Nx_0)-u_M(T,P_Mx_0)\\
e_{N,M,1}&=\int_0^T\E\bigl[Du_M(T-t,X_N(t)).\bigl(A(P_M-P_N)F(X_N(t))\bigr)\bigr]dt\\
e_{N,M,2}&=\frac12\int_0^T\E[\bigl[\sum_{j\ge 0}D^2u_M(T-t,X_N(t)).\bigl((P_N-P_M)Q^{\frac12}e_j,(P_N+P_M)Q^{\frac12}e_j\bigr)\bigr]dt.
\end{align*}
Let us prove upper bounds for these three error terms. Using the first inequality of~\eqref{eq:theoKolmogorov} from Theorem~\ref{theo:Kolmogorov} with $\alpha=0$, one obtains for all $M\ge N$ the inequality
\begin{equation}\label{eq:proofGalerkin-eNM0}
|e_{N,M,0}|\le C_{\gamma}(T)\bigl(1+\|x_0\|_\gamma^q\bigr)|(P_N-P_M)x_0|\le C_{\gamma}(T)\bigl(1+\|x_0\|_\gamma^q\bigr)\lambda_N^{-\gamma}\|x_0\|_\gamma.
\end{equation}
Let us now obtain an upper bound for the error term $e_{N,M,1}$. Observe that one has $\langle A(P_M-P_N)F(X_N(t)),e_0\rangle=0$ for all $t\in[0,T]$ and all $M\ge N$. Using the first inequality of~\eqref{eq:Kolmogorov} from Theorem~\ref{theo:Kolmogorov} with $\alpha\in[0,2)$, one obtains the following inequality: for all $M\ge N$, one has
\[
|e_{N,M,1}|\le C_{\alpha,\gamma}(T)\int_0^T(T-t)^{-\frac{\alpha}{2}}\E\Bigl[\bigl(1+\|X_N(t)\|_\gamma^q\bigr)\|A^{1-\alpha}(P_M-P_N)F(X_N(t))\|\Bigr]dt.
\]
On the one hand, in the space-time white noise case (Assumption~\ref{ass:Q}-$(i)$, $d=1$, $\Gamma=3/2$), choosing $\alpha=\frac12+\gamma$, one has $1-\alpha=\frac12-\gamma$ and one obtains
\begin{align*}
|e_{N,M,1}|&\le C_{\gamma}(T)\lambda_{N}^{-\gamma}\int_0^T(T-t)^{-\frac{1}{4}-\frac{\gamma}{2}}\E\Bigl[\bigl(1+\|X_N(t)\|_\gamma^q\bigr)\|A^{\frac12}F(X_N(t))\|\Bigr]dt\\
&\le C_{\gamma}(T)\lambda_{N}^{-\gamma}\int_0^T(T-t)^{-\frac{1}{4}-\frac{\gamma}{2}}\E\Bigl[\bigl(1+\|X_N(t)\|_\gamma^q\bigr)\bigl(1+\|X_N(t)\|_1^3\bigr)\Bigr]dt,
\end{align*}
using the algebra property~\eqref{eq:algebra} for $\HH^1$ ($1>d/2=1/2$). Using the moment bounds~\eqref{eq:momentsX-Galerkin}, for all $\gamma\in(\Gamma_0,\Gamma)=(1,\frac32)$, one obtains
\begin{equation}\label{eq:proofGalerkin-eNM1-stwn}
|e_{N,M,1}|\le C_\gamma(T)\lambda_N^{-\gamma}\bigl(1+\|x_0\|_{\gamma}^q\bigr).
\end{equation}
On the other hand, in the trace-class noise case (Assumption~\ref{ass:Q}-$(ii)$, $d\in\{1,2,3\}$, $\Gamma=2$), choosing $\alpha=1+\frac{\gamma}{2}$
and noting $(P_N-P_M)F(X_N(t)) \in \hh$, one obtains
\begin{align*}
|e_{N,M,1}|&\le C_{\gamma}(T)\int_0^T(T-t)^{-\frac12-\frac{\gamma}{4}}\E\Bigl[\bigl(1+\|X_N(t)\|_\gamma^q\bigr)\|A^{-\frac{\gamma}{2}}(P_N-P_M)F(X_N(t))\|\Bigr]dt\\
&\le C_{\gamma}(T)\lambda_{N}^{-\gamma}\int_0^T(T-t)^{-\frac12-\frac{\gamma}{4}}\E\Bigl[\bigl(1+\|X_N(t)\|_\gamma^q\bigr)\|F(X_N(t))\|_\gamma\Bigr]dt\\
&\le C_{\gamma}(T)\lambda_{N}^{-\gamma}\int_0^T(T-t)^{-\frac12-\frac{\gamma}{4}}\E\Bigl[\bigl(1+\|X_N(t)\|_\gamma^q\bigr)(1+\|X_N(t))\|_\gamma^3\bigr)\Bigr]dt,
\end{align*}
since in this case $\HH^\gamma$ is an algebra (owing to the condition $\gamma\in(\frac{d}{2},\Gamma)\setminus\{\frac32\}$). Using the moment bounds~\eqref{eq:momentsX-Galerkin}, for all $\gamma\in(\Gamma_0,\Gamma)$, one obtains
\begin{equation}\label{eq:proofGalerkin-eNM1-trace}
|e_{N,M,1}|\le C_\gamma(T)\lambda_N^{-\gamma}\bigl(1+\|x_0\|_{\gamma}^q\bigr).
\end{equation}
It remains to deal with the error term $e_{N,M,2}$. Observe that one has $\langle (P_N-P_M)Q^{\frac12}e_j,e_0\rangle=0$ and $\|(P_M+P_N)Q^{\frac12}e_j\|\le 2\|Q^{\frac12}e_j\|$ for all $j\in\N$ and $M\ge N$. Using the second inequality of~\eqref{eq:theoKolmogorov} from Theorem~\ref{theo:Kolmogorov} with $\alpha_1=\alpha\in[0,2)$ and $\alpha_2=0$, one obtains the following inequality: for all $M\ge N$, one has
\[
|e_{N,M,2}|\le C_{\alpha,\gamma}(T)\int_{0}^{T}(T-t)^{-\frac{\alpha}{2}}\E\bigl[1+\|X_N(t)\|_\gamma^q\bigr]dt\sum_{j\ge 0}\|A^{-\alpha}(P_M-P_N)Q^{\frac12}e_j\| \|Q^{\frac12}e_j\|.
\]
On the one hand, in the space-time white noise case (Assumption~\ref{ass:Q}-$(i)$, $d=1$, $\Gamma=3/2$), one has $Q^{\frac12}e_j=e_j$, and choosing $\alpha=\frac12+\frac{\gamma+\Gamma}{2}$, one obtains
\begin{align*}
\sum_{j\ge 0}\|A^{-\alpha}(P_M-P_N)Q^{\frac12}e_j\| \|Q^{\frac12}e_j\|&=\sum_{j=N+1}^{M}\lambda_j^{-\alpha}\\
&=\sum_{j=N+1}^{M}\lambda_j^{-\frac12-\frac{\Gamma-\gamma}{2}-\gamma}\\
&\le \lambda_N^{-\gamma}\sum_{j\ge 1}\lambda_j^{-\frac12-\frac{\Gamma-\gamma}{2}}.
\end{align*}
Using the moment bounds~\eqref{eq:momentsX-Galerkin}, for all $\gamma\in(\frac12,\Gamma)=(\frac12,\frac32)$, one obtains
\begin{equation}\label{eq:proofGalerkin-eNM2-stwn}
|e_{N,M,2}|\le C_\gamma(T)\lambda_N^{-\gamma}\bigl(1+\|x_0\|_{\gamma}^q\bigr).
\end{equation}
On the other hand, in the trace-class noise case (Assumption~\ref{ass:Q}-$(ii)$, $d\in\{1,2,3\}$, $\Gamma=2$), choosing $\alpha=\gamma$, one obtains
\[
\sum_{j\ge 0}\|A^{-\alpha}(P_M-P_N)Q^{\frac12}e_j\| \|Q^{\frac12}e_j\|\le \lambda_N^{-\gamma}\sum_{j\ge 0}\|Q^{\frac12}e_j\|^2=\lambda_N^{-\gamma}{\rm Tr}(Q).
\]
Using the moment bounds~\eqref{eq:momentsX-Galerkin}, for all $\gamma\in(\Gamma_0,\Gamma)$, one obtains
\begin{equation}\label{eq:proofGalerkin-eNM2-trace}
|e_{N,M,2}|\le C_\gamma(T)\lambda_N^{-\gamma}\bigl(1+\|x_0\|_{\gamma}^q\bigr).
\end{equation}
We are now in position to conclude the proof of the inequality~\eqref{eq:weakerrorGalerkinNM}: using~\eqref{eq:decompweakerrorGalerkinProof}, it suffices to gather~\eqref{eq:proofGalerkin-eNM0},~\eqref{eq:proofGalerkin-eNM1-stwn} and~\eqref{eq:proofGalerkin-eNM2-stwn} in the space-time white noise case, and~\eqref{eq:proofGalerkin-eNM0},~\eqref{eq:proofGalerkin-eNM1-trace} and~\eqref{eq:proofGalerkin-eNM2-trace} in the trace-class noise case. Using the arguments explained above, the proof of Theorem~\ref{theo:Galerkin} is thus completed.
\end{proof}

\section{Weak error estimates for the tamed exponential Euler scheme}
\label{sec:prooffull}

The objective of this section is to prove Theorem~\ref{theo:tamed}. Before proceeding with the analysis of the weak error, let us give some auxiliary results.

\subsection{Auxiliary results}

Recall that the continuous-time process $\bigl(\tilde{X}_N^{\Delta t}(t)\bigr)_{0\le t\le T}$ is defined by~\eqref{eq:Xtilde} (see Section~\ref{sec:full}), and satisfies $\tilde{X}_N^{\Delta t}(t_k)=X_{N,k}$ for all $k\in\{0,\ldots,K\}$.

For all $k\in\{0,\ldots,K\}$ and $t\in[t_k,t_{k+1})$, set $\ell(t)=k$ and $t_{\ell(t)}=t_k$. The first auxiliary result considered in this section provides some moment bounds for the process $\tilde{X}_N^{\Delta t}$.
\begin{theo}\label{theo:momentbounds-Xtilde}
Let Assumption~\ref{ass:Q} be satisfied. For all $T\in(0,\infty)$, $m\in\{1,\ldots\}$ and $\gamma\in(\Gamma_0,\Gamma)$, there exists $q\in\N$ and $C_{\gamma,m}(T)\in(0,\infty)$ such that for all $x_0\in\HH^\gamma$, all $N\in\N$ and all $\Delta t=T/K$ with $K\in\N$, one has
\begin{equation}
\bigl(\E_{x_0}[\underset{0\le t\le T}\sup~\|\tilde{X}_N^{\Delta t}(t)\|_\gamma^m]\bigr)^{\frac1m}\le C_{\gamma,m}(T)(1+\|x_0\|_\gamma^{q}).
\end{equation}
\end{theo}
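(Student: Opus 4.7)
The plan is to deduce Theorem~\ref{theo:momentbounds-Xtilde} from the already-established moment bounds at the grid times (Theorem~\ref{theo:momentbounds-tamed}) by first putting $\tilde{X}_N^{\Delta t}(t)$ in a global mild form. Iterating the recursion~\eqref{eq:tamed} together with the definition~\eqref{eq:Xtilde} of $\tilde X_N^{\Delta t}$, one finds that for all $t\in[0,T]$
\begin{equation*}
\tilde{X}_N^{\Delta t}(t)=e^{-tA^2}P_Nx_0+\int_0^t e^{-(t-s)A^2}\frac{-AP_NF(X_{N,\ell(s)})}{1+\Delta t\|P_NF(X_{N,\ell(s)})\|}\,ds+Z_N^{\Delta t}(t),
\end{equation*}
where $Z_N^{\Delta t}(t)=\int_0^t e^{-(t-t_{\ell(s)})A^2}P_N\,dW^Q(s)$ is a discretized stochastic convolution. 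The three contributions will then be controlled separately, uniformly in $t$, $N$ and $\Delta t$.

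For the first term, a direct spectral computation using $e^{-sA^2}e_0=e_0$ and $|e^{-sA^2}\PP y|_\gamma\le|\PP y|_\gamma$ gives the contraction bound $\|e^{-tA^2}P_Nx_0\|_\gamma\le\|x_0\|_\gamma$. For the drift integral I will exploit that $AP_NF(X_{N,\ell(s)})\in\hh$ together with the spectral smoothing estimate $\|Ae^{-rA^2}u\|_\gamma\le C_\gamma r^{-(\gamma+2)/4}\|u\|$, which is integrable in $r\in[0,T]$ since $\gamma<\Gamma\le 2$; combined with the trivial bound $(1+\Delta t\|P_NF\|)^{-1}\le 1$, the cubic estimate $\|F(y)\|\le C(1+\|y\|_{L^6}^3)$ and the Sobolev embedding $\HH^\gamma\hookrightarrow L^6$ (valid because $\gamma>\Gamma_0\ge d/2$), this yields
\begin{equation*}
\Bigl\|\int_0^t e^{-(t-s)A^2}\tfrac{-AP_NF(X_{N,\ell(s)})}{1+\Delta t\|P_NF(X_{N,\ell(s)})\|}\,ds\Bigr\|_\gamma\le C_\gamma(T)\bigl(1+\underset{0\le k\le K}\sup~\|X_{N,k}\|_\gamma^3\bigr),
\end{equation*}
uniformly in $t\in[0,T]$, $N$ and $\Delta t$, which by Theorem~\ref{theo:momentbounds-tamed} has bounded $L^m$-moments.

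The main obstacle is the uniform $L^m$-bound on $\sup_{t\in[0,T]}\|Z_N^{\Delta t}(t)\|_\gamma$. A direct Gaussian estimate on each subinterval $[t_k,t_{k+1}]$ followed by a maximum over the $K=T/\Delta t$ pieces would introduce a $\log(T/\Delta t)$ factor which is not acceptable as $\Delta t\to 0$. To avoid this, I will compare $Z_N^{\Delta t}$ with the genuine stochastic convolution $Z_N(t)=P_N Z(t)$ through the identity
\begin{equation*}
Z_N^{\Delta t}(t)-Z_N(t)=\int_0^t e^{-(t-s)A^2}\bigl(e^{-(s-t_{\ell(s)})A^2}-I\bigr)P_N\,dW^Q(s).
\end{equation*}
Applying It\^o's isometry, the regularity estimate~\eqref{eq:regul2} with a small parameter $\beta\in(0,\Gamma-\gamma)$ and the smoothing inequality~\eqref{eq:smoothing-hh} yields $\E\|Z_N^{\Delta t}(t)-Z_N(t)\|_\gamma^2\le C_\gamma \Delta t^{\beta/2}$ uniformly in $N$ and $t$. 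The factorization method (or Kolmogorov continuity applied to the underlying Gaussian process) then upgrades this to a uniform supremum-in-time $L^m$-bound for the difference, and since $\E[\sup_t\|Z_N(t)\|_\gamma^m]\le \E[\sup_t\|Z(t)\|_\gamma^m]<\infty$ by~\eqref{eq:strong-moment-Zt}, one deduces $\E[\sup_t\|Z_N^{\Delta t}(t)\|_\gamma^m]\le C_m$ uniformly in $N$ and $\Delta t$.

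Combining the three bounds, taking $L^m$-norms, and applying Theorem~\ref{theo:momentbounds-tamed} to $\sup_k\|X_{N,k}\|_\gamma$ then gives the claimed moment bound. The essential novelty compared with the grid-point moment bound lies in the comparison argument for the noise term; the initial-value and drift contributions are controlled by essentially the same estimates that go into Theorem~\ref{theo:momentbounds-tamed}.
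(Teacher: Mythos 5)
Your proposal is circular as it stands. You deduce Theorem~\ref{theo:momentbounds-Xtilde} from the grid-point moment bounds of Theorem~\ref{theo:momentbounds-tamed}, but in this paper Theorem~\ref{theo:momentbounds-tamed} has no independent proof: it is obtained precisely as a corollary of Theorem~\ref{theo:momentbounds-Xtilde} (via the identity $\tilde{X}_N^{\Delta t}(t_k)=X_{N,k}$), and the cited external result \cite[Corollary~3.6]{CQW22} covers only a restricted class of $Q$-Wiener processes, which is why the paper reproves it. The entire difficulty of the statement lies in establishing moment bounds for the explicit tamed scheme in the first place, and your argument assumes that part away. The paper's actual proof decomposes $\tilde{X}_N^{\Delta t}=\tilde{R}_N^{\Delta t}+\tilde{r}_N^{\Delta t}+\tilde{Z}_N^{\Delta t;x_0}$, introduces the truncation events $\Omega_{N,k}^{\Delta t}=\{\sup_{\ell\le k}\|X_{N,\ell}\|\le\Delta t^{-\theta}\}$ so that the taming factor can be exploited, proves moment bounds on these events through energy estimates in the $\|A^{-\frac12}\cdot\|$ and $\|\cdot\|$ norms, $L^6$-estimates (Gagliardo--Nirenberg for $d=1$, the energy functional $J$ for $d\in\{2,3\}$), and a mild-formulation bootstrap to the $\|\cdot\|_\gamma$ norm, and finally removes the indicator functions by a Markov-inequality argument. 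None of this machinery appears in your proposal, and it cannot be replaced by an appeal to Theorem~\ref{theo:momentbounds-tamed}.

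The secondary ingredients you do supply are essentially sound but either already part of the paper's argument or unnecessarily complicated. The global mild form of $\tilde{X}_N^{\Delta t}$ and the bound on the drift integral via the smoothing estimate with exponent $-(\gamma+2)/4$ are correct (and would indeed reduce the continuous-time bound to the grid-point bound, if the latter were available independently). For the discretized stochastic convolution, your worry about a logarithmic loss from a union bound is legitimate, but the comparison with the exact stochastic convolution is a detour: the paper's Lemma~\ref{lem:tamedGaussian} handles $\sup_t\|\tilde{Z}_N^{\Delta t}(t)\|_\gamma$ directly by proving the increment estimate $\E\|\tilde{Z}_N^{\Delta t}(t)-\tilde{Z}_N^{\Delta t}(s)\|_\gamma^2\le C|t-s|^{(\Gamma-\gamma)/4}$ uniformly in $N$ and $\Delta t$ and invoking the Kolmogorov criterion for the Gaussian process; your comparison route would in any case still require such an increment estimate to control the supremum. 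To repair the proof you would need to supply an independent argument for the grid-point (or, equivalently, continuous-time) moment bounds along the lines of the paper's Lemmas~\ref{lem:tamed-r} and~\ref{lem:tamed-R}.
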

Note that Theorem~\ref{theo:momentbounds-tamed} (see Section~\ref{sec:full}) giving moment bounds for the tamed exponential Euler scheme is a straightforward corollary of Theorem~\ref{theo:momentbounds-Xtilde}. The proof of Theorem~\ref{theo:momentbounds-Xtilde} is postponed to Section~\ref{sec:proofmomentsfull}.

An immediate corollary of Theorem~\ref{theo:momentbounds-Xtilde} is Lemma~\ref{lem:incrementsXtilde} below.
\begin{lemma}\label{lem:incrementsXtilde}
For all $T\in(0,\infty)$, $m\in\{1,\ldots\}$ and $\gamma\in(\Gamma_0,\Gamma)$, there exists $q\in\N$ and $C_{\gamma,m}(T)\in(0,\infty)$ such that for all $x_0\in H^\gamma$, all $N\in\N$ and all $\Delta t=T/K$ with $K\in\N$, one has
\begin{equation}\label{eq:incrementsXtilde}
\underset{0\le k\le K-1}\sup~\underset{t\in[t_k,t_{k+1}]}\sup~\bigl(\E[\|\tilde{X}_N^{\Delta t}(t)-X_{N,k}\|^m]\bigr)^{\frac1m}\le  C_{\gamma,m}(T)(1+\|x_0\|_\gamma^{{q}})\Delta t^{\frac{\gamma}{4}}.
\end{equation}
\end{lemma}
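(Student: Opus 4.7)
The plan is to use the piecewise mild formulation~\eqref{eq:Xtilde} to write the increment $\tilde{X}_N^{\Delta t}(t) - X_{N,k}$, for $t \in [t_k, t_{k+1}]$, as the sum of three explicit terms:
\begin{align*}
\tilde{X}_N^{\Delta t}(t) - X_{N,k} &= \bigl(e^{-(t-t_k)A^2} - I\bigr) X_{N,k} \\
&\quad + \int_{t_k}^{t} e^{-(t-s)A^2} \frac{-A P_N F(X_{N,k})}{1 + \Delta t \|P_N F(X_{N,k})\|} \, ds \\
&\quad + e^{-(t-t_k)A^2} P_N \bigl(W^Q(t) - W^Q(t_k)\bigr),
\end{align*}
and then to estimate each term separately in $L^m(\Omega;\HH)$, using that $(t-t_k)\le\Delta t$.

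For the first term, I would apply the regularity inequality~\eqref{eq:regul2} with $\alpha = \gamma$ (noting that $e_0$ is invariant so only the $\PP$-component contributes) to extract a factor $(t-t_k)^{\gamma/4}\|X_{N,k}\|_\gamma$, and then take the $L^m$-norm using the uniform moment bounds from Theorem~\ref{theo:momentbounds-Xtilde} (equivalently Theorem~\ref{theo:momentbounds-tamed}). For the drift term, since $A$ commutes with the semigroup and $AP_NF(X_{N,k})\in\hh$, the smoothing inequality~\eqref{eq:smoothing-hh} with $\alpha=2$ yields a singular factor $(t-s)^{-1/2}$ which integrates to a bounded multiple of $(t-t_k)^{1/2}$; the taming denominator is discarded using that it is $\ge 1$, and the cubic growth of $F$ is absorbed via the Sobolev embedding into $L^6$ available under the condition $\gamma > \Gamma_0$. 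Since $\gamma<\Gamma\le 2$, the resulting rate $(t-t_k)^{1/2}$ dominates the desired $(t-t_k)^{\gamma/4}$ up to a constant depending on $T$.

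The stochastic term requires separate analysis in the two cases of Assumption~\ref{ass:Q}. Itô's isometry gives
\[
\E\|e^{-(t-t_k)A^2} P_N \bigl(W^Q(t)-W^Q(t_k)\bigr)\|^2 = (t-t_k)\,\|e^{-(t-t_k)A^2} P_N Q^{\frac12}\|_{\mathcal{L}_2(\HH)}^2,
\]
and higher-order moments are controlled by the Gaussian nature of the integrand (or equivalently by Burkholder's inequality). In the trace-class case, the Hilbert--Schmidt norm is uniformly bounded by $\sqrt{{\rm Tr}(Q)}$, giving rate $(t-t_k)^{1/2}=(t-t_k)^{\Gamma/4}$. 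In the space-time white noise case ($d=1$, $Q=I$), one must extract a singular factor by bounding the spectral sum $\sum_{j=0}^{N}e^{-2(t-t_k)\lambda_j^2}\le C(t-t_k)^{-1/4}$ using $\lambda_j \sim c_1 j^2$, yielding rate $(t-t_k)^{3/8}=(t-t_k)^{\Gamma/4}$. Since $\gamma<\Gamma$, both cases produce an upper bound of order $(t-t_k)^{\gamma/4}$.

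The main obstacle---though only a mild one---is the handling of the stochastic term under Assumption~\ref{ass:Q}-$(i)$, where the smoothing effect of the semigroup must compensate for the lack of $\HH$-regularity of the Wiener increment $W^Q(t)-W^Q(t_k)$. Once this estimate is in place, the remaining steps amount to elementary moment computations relying on Theorem~\ref{theo:momentbounds-Xtilde}, which explains why the authors describe Lemma~\ref{lem:incrementsXtilde} as an immediate corollary.
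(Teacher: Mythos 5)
Your decomposition of $\tilde{X}_N^{\Delta t}(t)-X_{N,k}$ into the three terms (semigroup increment, tamed drift integral, stochastic convolution increment) and the tools you invoke — the regularity estimate~\eqref{eq:regul}, the smoothing property, the uniform moment bounds of Theorem~\ref{theo:momentbounds-Xtilde}, and It\^o's isometry with the two noise cases treated separately — coincide with the paper's proof; the only cosmetic differences are that the paper bounds the drift via $\|F(X_{N,k})\|_\gamma$ and the algebra property (getting $\Delta t^{\frac12+\frac{\gamma}{4}}$) where you bound $\|F(X_{N,k})\|$ via the $L^6$ embedding (getting $\Delta t^{\frac12}$), and that in the white-noise case the paper interpolates the spectral sum to land exactly on $\Delta t^{\frac{\gamma}{4}}$ where you bound the heat-kernel trace to get $\Delta t^{\frac{3}{8}}$; all of these dominate the claimed rate. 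The proposal is correct and takes essentially the same route as the paper.
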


\begin{proof}
For all $k\in\{0,\ldots,K-1\}$ and $t\in[t_k,t_{k+1}]$, one has
\begin{align*}
\tilde{X}_N^{\Delta t}(t)-X_{N,k}&=(e^{-(t-t_k)A^2}-I)X_{N,k}\\
&+\int_{t_k}^{t} e^{-(t-s)A^2}\frac {-A P_N F(X_{N,k})}{1 + \Delta t \| P_N F(X_{N,k} )\|}ds\\
&+e^{-(t-t_k)A^2}P_N ( W^Q (t) - W^Q (t_k) ).
\end{align*}
First, using the inequality~\eqref{eq:regul} and Theorem~\ref{theo:momentbounds-Xtilde}, one has
\[
\bigl(\E[\|(e^{-(t-t_k)A^2}-I)X_{N,k}\|^m]\bigr)^{\frac1m}\le C_\gamma\Delta t^{\frac{\gamma}{4}}\bigl(\E[\|X_{N,k}\|_\gamma^m]\bigr)^{\frac1m}\le C_{\gamma,m}(T)(1+\|x_0\|_\gamma^q)\Delta t^{\frac{\gamma}{4}}.
\]
Second, using the smoothing inequality~\eqref{eq:smoothing-HH}, the inequality~\eqref{eq:boundF1} and Theorem~\ref{theo:momentbounds-Xtilde}, one has
\begin{align*}
\bigl(\E[\|\int_{t_k}^{t} e^{-(t-s)A^2}\frac {-A P_N F(X_{N,k})}{1 + \Delta t \| P_N F(X_{N,k} )\|}ds\|^m]\bigr)^{\frac1m}&\le C_\gamma\int_{t_k}^{t}(t-s)^{-\frac12+\frac{\gamma}{4}}ds\bigl(\E[\|F(X_{N,k})\|_{\gamma}^m]\bigr)^{\frac1m}\\
&\le C_\gamma\Delta t^{\frac12+\frac{\gamma}{4}}\bigl(\E[\|F(X_{N,k})\|_\gamma^m]\bigr)^{\frac1m}\\
&\le C_\gamma\Delta t^{\frac{\gamma}{4}}\bigl(\E[(1+\|X_{N,k}\|_\gamma^{3m}]\bigr)^{\frac1m}\\
&\le C_{\gamma,m}(T)\Delta t^{\frac{\gamma}{4}}(1+\|x_0\|_\gamma^{3q}).
\end{align*}
It remains to deal with the third term. One needs different arguments to treat the two situations for the noise.
On the one hand, under Assumption~\ref{ass:Q}-$(i)$ ($d=1$, $Q=I$ and $\Gamma=\frac32$), with $\gamma\in(\Gamma_0,\Gamma)$, one obtains
\begin{align*}
\bigl(\E[\|e^{-(t-t_k)A^2}P_N ( W^Q (t) - W^Q (t_k) )\|^m]\bigr)^{\frac1m}&\le C_m\bigl(\E[\|e^{-(t-t_k)A^2}P_N ( W^Q (t) - W^Q (t_k) )\|^2]\bigr)^{\frac12}\\
&\le C_m\Bigl(\sum_{j\in\N}(t-t_k)e^{-2(t-t_k)\lambda_j^2}\Bigr)^{\frac12}\\
&\le C_m(t-t_k)^{\frac12-\frac{2-\gamma}{4}}\Bigl(\sum_{j\in\N}\lambda_j^{-2+\gamma}\Bigr)^{\frac12}\\
&\le C_{\gamma,m}\Delta t^{\frac{\gamma}{4}}.
\end{align*}
On the other hand, under Assumption~\ref{ass:Q}-$(ii)$ ($d\in\{1,2,3\}$, ${\rm Tr}(Q)<\infty$ and $\Gamma=2$), with $\gamma\in(\Gamma_0,\Gamma)$, one obtains
\begin{align*}
\bigl(\E[\|e^{-(t-t_k)A^2}P_N ( W^Q (t) - W^Q (t_k) )\|^m]\bigr)^{\frac1m}&\le C_m\bigl(\E[\|e^{-(t-t_k)A^2}P_N ( W^Q (t) - W^Q (t_k) )\|^2]\bigr)^{\frac12}\\
&\le C_m\Delta t^{\frac12}({\rm Tr}(Q))^{\frac12}\\
&\le C_{\gamma,m}\Delta t^{\frac{\gamma}{4}}.
\end{align*}

Gathering the estimates concludes the proof of Lemma~\ref{lem:incrementsXtilde}.
\end{proof}

Let us also state and prove the following result concerning the nonlinearity $F$.
\begin{lemma}\label{lem:F}
For all $\gamma\in(\frac{d}{2},\Gamma)\setminus\{\frac 32\}$, there exists $C_\gamma\in(0,\infty)$ such that for all $x\in \HH^\gamma$ and $y\in \HH$, one has
\begin{equation}\label{eq:lemF}
\|A^{-\frac{\gamma}{2}}\PP (F'(x).y)\|\le C_\gamma (1+\|x\|_\gamma^2)(\|A^{-\frac{\gamma}{2}}\PP y\|+|\langle y,e_0\rangle| ).
\end{equation}
\end{lemma}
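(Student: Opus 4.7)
The plan is to split $y = \PP y + \langle y, e_0\rangle e_0$, so that
\[
F'(x).y = (3x^2 - 1)y = (3x^2-1)\PP y + \langle y, e_0\rangle (3x^2-1),
\]
and to estimate the $\|A^{-\gamma/2}\PP\cdot\|$--norm of each piece separately. The reason for this decomposition is that $A^{-\gamma/2}$ lives only on the zero-mean subspace $\hh$, so the constant direction has to be peeled off before any fractional-power bound can be applied.

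For the ``mean'' contribution $\langle y,e_0\rangle(3x^2-1)$, I would simply use that $A^{-\gamma/2}\PP$ is bounded on $\hh$, reducing matters to controlling $\|3x^2 - 1\|$. Combining the triangle inequality with $\|x^2\| = \|x\|_{L^4}^2$ and the Sobolev embedding $\HH^\gamma \hookrightarrow L^4$ (valid since $\gamma > d/2 > d/4$) gives
\[
\|A^{-\gamma/2}\PP(\langle y,e_0\rangle(3x^2-1))\| \le C_\gamma |\langle y,e_0\rangle|\bigl(1+\|x\|_\gamma^2\bigr).
\]

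For the ``projection'' contribution $(3x^2-1)\PP y$, I would argue by duality. Writing $w = A^{-\gamma/2}u$ with $u\in\hh$, $\|u\|\le 1$, one has
\[
\|A^{-\gamma/2}\PP((3x^2-1)\PP y)\| = \sup_{v\in\hh^\gamma,\,|v|_\gamma\le 1}\langle (3x^2-1)\PP y, v\rangle.
\]
Moving $3x^2-1$ over by symmetry and using that $\langle \PP y,e_0\rangle = 0$ yields
\[
|\langle (3x^2-1)\PP y, v\rangle| = |\langle \PP y,(3x^2-1)v\rangle| \le \|A^{-\gamma/2}\PP y\|\cdot |\PP((3x^2-1)v)|_\gamma.
\]
Since under the hypothesis $\gamma\in(d/2,2]\setminus\{3/2\}$ the space $\HH^\gamma$ is an algebra (inequality~\eqref{eq:algebra}), the last factor is bounded by $\|(3x^2-1)v\|_\gamma \le C_\gamma(1+\|x\|_\gamma^2)|v|_\gamma$, using also $v\in\hh$ so that $\|v\|_\gamma = |v|_\gamma$. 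Taking the supremum over $v$ and adding the two contributions yields the claim.

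There is no real obstacle: the only subtlety is the careful bookkeeping of the $e_0$ direction, for which the constant $3x^2-1$ produces a nontrivial projection onto $\hh$. This is handled cleanly by the initial splitting of $y$ and by exploiting that $\PP y$ has zero mean so the boundary term in the duality pairing drops out.
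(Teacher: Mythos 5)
Your argument is correct and follows essentially the same route as the paper: a duality pairing against test functions in $\hh^\gamma$ combined with the algebra property~\eqref{eq:algebra} of $\HH^\gamma$, with the $e_0$-direction peeled off separately. The only cosmetic difference is that you split $y=\PP y+\langle y,e_0\rangle e_0$ at the outset and bound the mean contribution directly via $\|3x^2-1\|\le C(1+\|x\|_\gamma^2)$ and the boundedness of $A^{-\gamma/2}$ on $\hh$, whereas the paper keeps both contributions inside a single pairing $\langle y, F'(x).(A^{-\gamma/2}\PP z)\rangle$ and controls the mean part through an $L^1$ estimate; both variants are valid.
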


\begin{proof}
Recall that, for $\gamma\in(\frac{d}{2},\Gamma)/\{\frac 32\}$, the space $\HH^\gamma$, equipped with the norm $\|\cdot\|_\gamma=\|A^{\frac{\gamma}{2}}\cdot\|$ is an algebra, see~\eqref{eq:algebra} in Section~\ref{sec:auxineq}. Let $x\in\HH^\gamma$ and $y\in\HH$. For all $z\in\HH$, one has
\begin{align*}
\langle A^{-\frac{\gamma}{2}}\PP (F'(x).y),\PP z\rangle&=\langle F'(x).y,A^{-\frac{\gamma}{2}}\PP z\rangle\\
&=\langle y,F'(x).\bigl(A^{-\frac{\gamma}{2}}\PP z\bigr)\rangle\\
&\le \|A^{-\frac{\gamma}{2}}\PP y\| \|F'(x).\bigl(A^{-\frac{\gamma}{2}}\PP z\bigr)\|_\gamma+ |\langle y,e_0\rangle|\langle e_0, F'(x).\bigl(A^{-\frac{\gamma}{2}}\PP z \rangle|  \\
&\le \|A^{-\frac{\gamma}{2}}\PP y\|(1+\|x\|_\gamma^2)\|A^{-\frac{\gamma}{2}}\PP z\|_\gamma+ |\langle y,e_0\rangle| \| F'(x).\bigl(A^{-\frac{\gamma}{2}}\PP z\bigr) \|_{L^1}  \\
&\le (\|A^{-\frac{\gamma}{2}}\PP y\|+|\langle y,e_0\rangle |)(1+\|x\|_\gamma^2)\|\PP z\|.
\end{align*}
Using the identity
\[
\|A^{-\frac{\gamma}{2}}(\PP F'(x).y)\|=\underset{z\in \HH\setminus\{0\}}\sup~\frac{\langle A^{-\frac{\gamma}{2}}F'(x).y,\PP z\rangle}{\|\PP z\|}
\]
then concludes the proof of Lemma~\ref{lem:F}.
\end{proof}

\subsection{Proof of Theorem~\ref{theo:tamed}}

\begin{proof}[Proof of Theorem~\ref{theo:tamed}]
Recall that the mapping $u_N$ is defined by~\eqref{eq:Kolmogorov}, and satisfies the regularity estimates stated in Theorem~\ref{theo:Kolmogorov} (uniformly with respect to the spatial discretization parameter $N$), see Section~\ref{sec:Kolmogorov}. As explained in Section~\ref{sec:full}, it suffices to prove the weak error estimate~\eqref{eq:weakerrortamed}. Recall that the condition $\gamma\in(\Gamma_0,\Gamma)$ is satisfied. Let $T\in(0,\infty)$ and $x_0\in H^\gamma$. Recall that using a telescoping sum argument and the auxiliary process $\tilde{X}_N^{\Delta t}$ defined by~\eqref{eq:Xtilde}, the weak error is written as
\begin{align*}
\E[\varphi(X_{N,K})]-\E[\varphi(X_N(T))]&=\E[u_N(0,X_{N,K})]-\E[u_N(t_K,X_{N,0})]\\
&=\sum_{k=0}^{K-1}\bigl(\E[u_N(T-t_{k+1},X_{N,k+1})]-\E[u_N(T-t_k,X_{N,k})]\bigr)\\
&=\sum_{k=0}^{K-1}\bigl(\E[u_N(T-t_{k+1},\tilde{X}_N^{\Delta t}(t_{k+1}))]-\E[u_N(T-t_k,\tilde{X}_N^{\Delta t}(t_k))]\bigr),
\end{align*}
using the equalities $T=K\Delta t$, $t_k=k\Delta t$, $u_N(0,\cdot)=\varphi$, and $X_{N,k}=\tilde{X}_{N}^{\Delta t}(t_k)$ for all $k\in\{0,\ldots,K\}$. For all $k\in[t_k,t_{k+1}]$, the auxiliary process $\bigl(\tilde{X}_N^{\Delta t}(t)\bigr)_{t\in[t_k,t_{k+1}]}$ is solution of the auxiliary stochastic evolution equation
\[
d\tilde{X}_N^{\Delta t}(t)=-A^2\tilde{X}_N^{\Delta t}(t)dt-\frac{1}{1+\Delta t\|P_NF(X_{N,k})\|}AP_NF(X_{N,k})dt+e^{-(t-t_k)A^2}dW^Q(t).
\]
Using It\^o's formula, the fact that $u_N$ is solution of the Kolmogorov equation~\eqref{eq:Kolmogorov}, one obtains the decomposition
\begin{equation}\label{eq:decompweakerrortamedProof}
\E[\varphi(X_{N,K})]-\E[\varphi(X_N(T))]=\sum_{k=0}^{K-1}\bigl(e_k^1+e_k^2\bigr),
\end{equation}
where, for all $k\in\{0,\ldots,K-1\}$, the error terms are given by
\begin{align*}
e_k^1&=\int_{t_k}^{t_{k+1}}\E\bigl[\langle Du_N(T-t,\tilde{X}_N^{\Delta t}(t)),\frac{AP_NF(X_{N,k})}{1+\Delta t\|P_NF(X_{N,k})\|}-AP_NF(\tilde{X}_N^{\Delta t}(t))\rangle\bigr]dt
\\
e_k^2&=\frac12\int_{t_k}^{t_{k+1}}\E\bigl[{\rm Tr}\Bigl(D^2u_N(T-t,\tilde{X}_N^{\Delta t}(t))\bigl(e^{-(t-t_k)A^2}P_NQP_Ne^{-(t-t_k)A^2}-P_NQP_N\Bigr)\bigr]dt.
\end{align*}
The two error terms $e_k^1$ and $e_k^2$ are decomposed as follows: for all $k\in\{0,\ldots,K-1\}$
\begin{align*}
e_k^1&=e_k^{1,1}+e_k^{1,2}\\
e_k^2&=e_k^{2,1}+e_k^{2,2}
\end{align*}
where
\begin{align*}
e_k^{1,1}&=\int_{t_k}^{t_{k+1}}\E\bigl[\langle Du_N(T-t,\tilde{X}_N^{\Delta t}(t)),AP_NF(X_{N,k})-AP_NF(\tilde{X}_N^{\Delta t}(t))\rangle\bigr]dt\\
e_k^{1,2}&=-\Delta t\int_{t_k}^{t_{k+1}}\E\bigl[\langle Du_N(T-t,\tilde{X}_N^{\Delta t}(t)),\frac{\|P_NF(X_{N,k})\|}{1+\Delta t\|P_NF(X_{N,k})\|}AP_NF(X_{N,k})\rangle\bigr]dt
\end{align*}
and
\begin{align*}
e_k^{2,1}&=\frac12\int_{t_k}^{t_{k+1}}\E\bigl[{\rm Tr}\Bigl(D^2u_N(T-t,\tilde{X}_N^{\Delta t}(t))\bigl(e^{-(t-t_k)A^2}P_NQP_N(e^{-(t-t_k)A^2}-I)\Bigr)\bigr]dt\\
e_k^{2,2}&=\frac12\int_{t_k}^{t_{k+1}}\E\bigl[{\rm Tr}\Bigl(D^2u_N(T-t,\tilde{X}_N^{\Delta t}(t))\bigl((e^{-(t-t_k)A^2}-I)P_NQP_N\Bigr)\bigr]dt.
\end{align*}

The weak error estimate~\eqref{eq:weakerrortamed} is a straightforward consequence of the decomposition~\eqref{eq:decompweakerrortamedProof} and of the following claims: for all $\gamma\in(\Gamma_0,\Gamma)$ and $T\in(0,\infty)$, there exists $C_\gamma(T)\in(0,\infty)$ such that for all $k\in\{0,\ldots,K-1\}$ one has
\begin{align}
|e_k^{1,1}|&\le C_\gamma(T)\vvvert\varphi\vvvert_2(1+\|x_0\|_\gamma^{q})\Delta t^{\frac{\gamma}{2}}\int_{t_k}^{t_{k+1}}(T-t)^{-\frac12-\frac{\gamma}{4}}dt,\label{eq:claim11}\\
|e_k^{1,2}|&\le C_\gamma(T)\vvvert\varphi\vvvert_2(1+\|x_0\|_\gamma^{q})\Delta t^{\frac{\gamma}{2}}\int_{t_k}^{t_{k+1}}(T-t)^{-\frac12}dt,\label{eq:claim12}\\
|e_k^{2,1}|&\le C_\gamma(T)\vvvert\varphi\vvvert_2(1+\|x_0\|_\gamma^{q})\Delta t^{\frac{\gamma}{2}}\int_{t_k}^{t_{k+1}}(T-t)^{-\frac{\alpha}{2}}dt,\label{eq:claim21}\\
|e_k^{2,2}|&\le C_\gamma(T)\vvvert\varphi\vvvert_2(1+\|x_0\|_\gamma^{q})\Delta t^{\frac{\gamma}{2}}\int_{t_k}^{t_{k+1}}(T-t)^{-\frac{\alpha}{2}}dt,\label{eq:claim22}
\end{align}
where the value of the parameter $\alpha\in(0,2]$ depends on the assumptions on the covariance $Q$: $\alpha=\frac12+\frac{\gamma+\Gamma}{2}$ under Assumption~\ref{ass:Q}-$(i)$ and $\alpha=\gamma$ under Assumption~\ref{ass:Q}-$(ii)$. The most difficult part of the proof is to establish~\eqref{eq:claim11}.

$\bullet$ Proof of the inequality~\eqref{eq:claim12}.

Using Taylor's formula, one has
\begin{align*}
e_k^{1,1}&=\int_{t_k}^{t_{k+1}}\E\bigl[\langle Du_N(T-t,\tilde{X}_N^{\Delta t}(t)),AP_NF'(X_{N,k}).\bigl(X_{N,k}-\tilde{X}_N^{\Delta t}(t)\bigr)\rangle\bigr]dt\\
&-\int_{t_k}^{t_{k+1}}\E\bigl[\langle Du_N(T-t,\tilde{X}_N^{\Delta t}(t)),\int_{0}^{1}(1-\xi)AP_NF''(\varrho_{N,k,t}(\xi)).\bigl(\tilde{X}_N^{\Delta t}(t)-X_{N,k},\tilde{X}_N^{\Delta t}(t)-X_{N,k}\bigr)d\xi\rangle\bigr]dt,
\end{align*}
where $\varrho_{N,k,t}(\xi)=X_{N,k}+\xi(\tilde{X}_N^{\Delta t}(t)-X_{N,k})$. 

Writing $Du_N(T-t,\tilde{X}_N^{\Delta t}(t))=Du_N(T-t,X_{N,k})+Du_N(T-t,\tilde{X}_N^{\Delta t}(t))-Du_N(T-t,X_{N,k})$ and using a conditional expectation argument to get
\[
\E\bigl[\langle Du_N(T-t,X_{N,k}),AP_NF'(X_{N,k}).\bigl(e^{-(t-t_k)A^2}(W^Q(t)-W^Q(t_k))\bigr)\rangle\bigr]=0,
\]
the error term $e_k^{1,1}$ is decomposed as
\[
e_k^{1,1}=e_k^{1,1,1}+e_k^{1,1,2}+e_k^{1,1,3}+e_k^{1,1,4},
\]
where
\begin{align*}
e_k^{1,1,1}&=-\int_{t_k}^{t_{k+1}}\E\bigl[\langle Du_N(T-t,X_{N,k}),AP_NF'(X_{N,k}).\bigl((e^{-(t-t_k)A^2}-I)X_{N,k}\bigr)\rangle\bigr]dt\\
e_k^{1,1,2}&=\int_{t_k}^{t_{k+1}}\E\bigl[\langle Du_N(T-t,X_{N,k}),AP_NF'(X_{N,k}).\int_{t_k}^{t}e^{-(t-s)A^2}\frac{AP_NF(X_{N,k})}{1+\Delta t\|P_NF(X_{N,k})\|}ds\rangle\bigr]dt\\
e_k^{1,1,3}&=\int_{t_k}^{t_{k+1}}\E\bigl[\langle Du_N(T-t,\tilde{X}_N^{\Delta t}(t))-Du_N(T-t,X_{N,k}),AP_NF'(X_{N,k}).\bigl(X_{N,k}-\tilde{X}_N^{\Delta t}(t)\bigr)\rangle\bigr]dt\\
e_k^{1,1,4}&=-\int_{t_k}^{t_{k+1}}\E\bigl[\langle Du_N(T-t,\tilde{X}_N^{\Delta t}(t)),\int_{0}^{1}(1-\xi)AP_NF''(\varrho_{N,k,t}(\xi)).\bigl(\tilde{X}_N^{\Delta t}(t)-X_{N,k},\tilde{X}_N^{\Delta t}(t)-X_{N,k}\bigr)d\xi\rangle\bigr]dt.
\end{align*}

For the error term $e_k^{1,1,1}$, using the regularity estimate on $Du_N(T-t,X_{N,k})$ from Theorem~\ref{theo:Kolmogorov} (with $\alpha=1+\frac{\gamma}{2}<2$), Lemma~\ref{lem:F} and the inequality~\eqref{eq:regul}, one obtains
\begin{align*}
|e_k^{1,1,1}|&\le C_\gamma(T)\vvvert\varphi\vvvert_2\int_{t_k}^{t_{k+1}}(T-t)^{-\frac12-\frac{\gamma}{4}}\E[(1+\|X_{N,k}\|_\gamma^q)\|A^{-\frac{\gamma}{2}}\PP F'(X_{N,k}).\bigl((e^{-(t-t_k)A^2}-I)X_{N,k}\bigr)\|]dt\\
&\le C_\gamma(T)\vvvert\varphi\vvvert_2\int_{t_k}^{t_{k+1}}(T-t)^{-\frac12-\frac{\gamma}{4}}\E[(1+\|X_{N,k}\|_\gamma^{q+2})\|A^{-\frac{\gamma}{2}}\bigl((e^{-(t-t_k)A^2}-I)X_{N,k}\bigr)\|]dt\\
&\le C_\gamma(T)\vvvert\varphi\vvvert_2\int_{t_k}^{t_{k+1}}(T-t)^{-\frac12-\frac{\gamma}{4}}dt\E[(1+\|X_{N,k}\|_\gamma^{q+3})] \Delta t^{\frac{\gamma}{2}}\\
&\le C_\gamma(T)\vvvert\varphi\vvvert_2(1+\|x_0\|_\gamma^{q(q+3)})\int_{t_k}^{t_{k+1}}(T-t)^{-\frac12-\frac{\gamma}{4}}dt \Delta t^{\frac{\gamma}{2}},
\end{align*}
using the moment bounds from Theorem~\ref{theo:momentbounds-tamed} in the last step.

For the error term $e_k^{1,1,2}$, using the regularity estimate on $Du_N(T-t,X_{N,k})$ from Theorem~\ref{theo:Kolmogorov} (with $\alpha=1+\frac{\gamma}{2}<2$) and Lemma~\ref{lem:F}, one obtains
\begin{align*}
&|e_k^{1,1,2}|\\
&\le C_\gamma(T)\vvvert\varphi\vvvert_2\int_{t_k}^{t_{k+1}}(T-t)^{-\frac12-\frac{\gamma}{4}}\E[(1+\|X_{N,k}\|_\gamma^q)\|A^{-\frac{\gamma}{2}}\PP F'(X_{N,k}).\int_{t_k}^{t}e^{-(t-s)A^2}\frac{AP_NF(X_{N,k})}{1+\Delta t\|P_NF(X_{N,k})\|}ds\|]dt\\
&\le C_\gamma(T)\vvvert\varphi\vvvert_2\int_{t_k}^{t_{k+1}}(T-t)^{-\frac12-\frac{\gamma}{4}}\E[(1+\|X_{N,k}\|_\gamma^{q+2})\|A^{-\frac{\gamma}{2}}\PP \int_{t_k}^{t}e^{-(t-s)A^2}AF(X_{N,k})ds\|]dt\\
&\le C_\gamma(T)\vvvert\varphi\vvvert_2\int_{t_k}^{t_{k+1}}(T-t)^{-\frac12-\frac{\gamma}{4}}\E[(1+\|X_{N,k}\|_\gamma^{q+2})\int_{t_k}^{t}\|A^{\frac{2-\gamma}{2}}\PP F(X_{N,k})\|]dsdt\\
&\le C_\gamma(T)\vvvert\varphi\vvvert_2\int_{t_k}^{t_{k+1}}(T-t)^{-\frac12-\frac{\gamma}{4}}dt\E[(1+\|X_{N,k}\|_\gamma^{q+5})\Delta t\\
&\le C_\gamma(T)\vvvert\varphi\vvvert_2(1+\|x_0\|_\gamma^{q(q+5)})\int_{t_k}^{t_{k+1}}(T-t)^{-\frac12-\frac{\gamma}{4}}dt \Delta t,
\end{align*}
using the fact that $\gamma$ can be chosen such that $\gamma>1$ since $\gamma>\Gamma_0$, and the moment bounds from Theorem~\ref{theo:momentbounds-tamed} in the last step.

For the error term $e_k^{1,1,3}$, using the regularity estimate on $D^2u_N(T-t,X_{N,k})$ from Theorem~\ref{theo:Kolmogorov} (with $\alpha_1=0$ and $\alpha_2=1$), one obtains
\begin{align*}
&|e_k^{1,1,3}|\\
&\le C_\gamma(T)\vvvert\varphi\vvvert_2\int_{t_k}^{t_{k+1}}(T-t)^{-\frac12}\E[(1+\|X_{N,k}\|_\gamma^q+\|\tilde{X}_N^{\Delta t}(t)\|_\gamma^q)\|\tilde{X}_N^{\Delta t}-X_{N,k}\| \|F'(X_{N,k}).(\tilde{X}_N^{\Delta t}-X_{N,k})\|]dt\\
&\le C_\gamma(T)\vvvert\varphi\vvvert_2\int_{t_k}^{t_{k+1}}(T-t)^{-\frac12}\E[(1+\|X_{N,k}\|_\gamma^{q+2}+\|\tilde{X}_N^{\Delta t}(t)\|_\gamma^{q+2})\|\tilde{X}_N^{\Delta t}-X_{N,k}\|^2]dt\\
&\le C_\gamma(T)\vvvert\varphi\vvvert_2(1+\|x_0\|_\gamma^{q(q+3)})\int_{t_k}^{t_{k+1}}(T-t)^{-\frac12}dt \Delta t^{\frac{\gamma}{2}},
\end{align*}
using the moment bounds from Theorem~\ref{theo:momentbounds-Xtilde}, the bounds on the increments from Lemma~\ref{lem:incrementsXtilde}, and H\"older's inequality in the last step.

For the error term $e_k^{1,1,4}$, using the regularity estimate on $D^2u_N(T-t,X_{N,k})$ from Theorem~\ref{theo:Kolmogorov} (with $\alpha=1+\frac{\gamma}{2}$), one obtains
\begin{align*}
&|e_k^{1,1,4}|\\
&\le C_\gamma(T)\vvvert\varphi\vvvert_2\int_{t_k}^{t_{k+1}}
(T-t)^{-\frac12 
- \frac{\gamma}{4}}
\E[(1+\|X_{N,k}\|_\gamma^q)\|\int_0^1 (1-\xi) \\
&\qquad\|A^{-\frac{\gamma}{2}} \PP F''(\varrho_{N,k,t}(\xi).\bigl(\tilde{X}_N^{\Delta t}(t)-X_{N,k},
 \tilde{X}_N^{\Delta t}(t)-X_{N,k}\bigr)\|]d\xi dt\\
&\le C_\gamma(T)\vvvert\varphi\vvvert_2\int_{t_k}^{t_{k+1}}(T-t)^{-\frac12
- \frac{\gamma}{4}
}\E[(1+\|X_{N,k}\|_\gamma^{q+1}+\|\tilde{X}_N^{\Delta t}(t)\|_\gamma^{q+1})\|\tilde{X}_N^{\Delta t}(t)-X_{N,k}\|^2]dt\\
&\le C_\gamma(T)\vvvert\varphi\vvvert_2(1+\|x_0\|_\gamma^{q(q+3)})\int_{t_k}^{t_{k+1}}
(T-t)^{-\frac12
- \frac{\gamma}{4}}dt \Delta t^{\frac{\gamma}{2}},
\end{align*}
using the moment bounds from Theorem~\ref{theo:momentbounds-Xtilde}, the inequality~\eqref{eq:ineq3}, the bounds on the increments from the Lemma~\ref{lem:incrementsXtilde}, and H\"older's inequality in the last step.

Gathering the estimates on the error terms $e_k^{1,1,1},e_k^{1,1,2},e_k^{1,1,3},e_k^{1,1,4}$, one obtains
\[
|e_k^{1,1}|\le C_\gamma(T)\vvvert\varphi\vvvert_2(1+\|x_0\|_\gamma^{q})\int_{t_k}^{t_{k+1}}(T-t)^{-\frac12-\frac{\gamma}{4}}dt \Delta t^{\frac{\gamma}{2}}.
\]
This provides the inequality~\eqref{eq:claim11}.

$\bullet$ Proof of the inequality~\eqref{eq:claim12}.

Owing to the regularity property of $Du_N(T-t,\cdot)$ from Theorem~\ref{theo:Kolmogorov} (with $\alpha=1$) and to the moment bounds on the auxiliary process $\tilde{X}_N^{\Delta t}$ from Theorem~\ref{theo:momentbounds-Xtilde}, for all $k\in\{0,\ldots,K-1\}$, one obtains the error estimate
\begin{align*}
|e_k^{1,2}|&\le C(T)\vvvert\varphi\vvvert_2\Delta t\int_{t_k}^{t_{k+1}}(T-t)^{-\frac12}\E\bigl[(1+\|\tilde{X}_N^{\Delta t}(t)\|_\gamma^q)\|P_NF(X_{N,k})\|^2\bigr]dt\\
&\le C_\gamma(T)\vvvert\varphi\vvvert_2\Delta t\bigl(1+\|x_0\|_\gamma^{q+6}\bigr)\int_{t_k}^{t_{k+1}}(T-t)^{-\frac12}dt,
\end{align*}
using the inequality~\eqref{eq:boundF1} and the moment bounds from Theorem~\ref{theo:momentbounds-Xtilde}. This provides the inequality~\eqref{eq:claim12}.

$\bullet$ Proof of the inequality~\eqref{eq:claim21}.

Owing to the regularity property of $D^2u_N(T-t,\cdot)$ from Theorem~\ref{theo:Kolmogorov} (with $\alpha_1=0$ and $\alpha_2=\alpha$) and to the moment bounds on the auxiliary process $\tilde{X}_N^{\Delta t}$ from Theorem~\ref{theo:momentbounds-Xtilde}, for all $k\in\{0,\ldots,K-1\}$, one obtains the upper bounds
\begin{align*}
|e_k^{2,1}|&\le \frac12\int_{t_k}^{t_{k+1}}\Big|\sum_{j\ge 1}D^2u_N(T-t,\tilde{X}_N^{\Delta t}(t)).\bigl(e^{-(t-t_k)A^2}P_NQ^{\frac12}e_j,(e^{-(t-t_k)A^2}-I)P_NQ^{\frac12}e_j\bigr)\Big|dt\\
&\le C_{\gamma}(T)\vvvert\varphi\vvvert_2\int_{t_k}^{t_{k+1}}(T-t)^{-\frac{\alpha}{2}}\E[(1+\|\tilde{X}_N^{\Delta t}(t)\|_\gamma^q)]\sum_{j\ge 1}\|Q^{\frac12}e_j\| \|A^{-\alpha}(e^{-(t-t_k)A^2}-I)P_NQ^{\frac12}e_j\|dt\\
&\le C_{\gamma}(T)\vvvert\varphi\vvvert_2(1+\|x_0\|_\gamma^{q^2})\int_{t_k}^{t_{k+1}}(T-t)^{-\frac{\alpha}{2}}\sum_{j\ge 1 }\|Q^{\frac12}e_j\| \|A^{-\alpha}(e^{-(t-t_k)A^2}-I)P_NQ^{\frac12}e_j\|dt.
\end{align*}

On the one hand, in the space-time white noise case (Assumption~\ref{ass:Q}-$(i)$, $d=1$, $Q=I$ and $\Gamma=\frac32$), choosing $\alpha=\frac{1}{2}+\frac{\gamma+\Gamma}{2}$, using the inequality~\eqref{eq:regul}, one obtains
\begin{align*}
|e_k^{2,1}|&\le C_{\gamma}(T)\vvvert\varphi\vvvert_2(1+\|x_0\|_\gamma^{q^2})\int_{t_k}^{t_{k+1}}(T-t)^{-\frac{\alpha}{2}} \sum_{j\ge 1}\lambda_j^{-\frac12-\frac{\Gamma-\gamma}{2}}(t-t_k)^{\frac{\gamma}{2}}dt\\
&\le C_{\gamma}(T)\vvvert\varphi\vvvert_2(1+\|x_0\|_\gamma^{q^2})\int_{t_k}^{t_{k+1}}(T-t)^{-\frac{\alpha}{2}}dt \Delta t^{\frac{\gamma}{2}}.
\end{align*}

On the other hand, in the trace-class noise case (Assumption~\ref{ass:Q}-$(ii)$, $d\in\{1,2,3\}$, ${\rm Tr}(Q)<\infty$ and $\Gamma=2$), choosing $\alpha=\gamma$, using the inequality~\eqref{eq:regul}, one obtains
\begin{align*}
|e_k^{2,1}|&\le C_{\gamma}(T)\vvvert\varphi\vvvert_2(1+\|x_0\|_\gamma^{q^2})\int_{t_k}^{t_{k+1}}(T-t)^{-\frac{\alpha}{2}}\sum_{j\in\N}\|Q^{\frac12}e_j\|^2 (t-t_k)^{\frac{\gamma}{2}}dt\\
&\le C_{\gamma}(T)\vvvert\varphi\vvvert_2(1+\|x_0\|_\gamma^{q^2})\int_{t_k}^{t_{k+1}}(T-t)^{-\frac{\gamma}{2}}dt \Delta t^{\frac{\gamma}{2}}.
\end{align*}
This provides the inequality~\eqref{eq:claim21}.

$\bullet$ Estimate for $e_k^{2,2}$.

Writing
\[
|e_k^{2,2}|\le \frac12\int_{t_k}^{t_{k+1}}\Big|\sum_{j\in\N}D^2u_N(T-t,\tilde{X}_N^{\Delta t}(t)).\bigl((e^{-(t-t_k)A^2}-I)P_NQ^{\frac12}e_j,P_NQ^{\frac12}e_j\bigr)\Big|dt
\]
it is straightforward to obtain~\eqref{eq:claim22} using the same arguments as in the proof of~\eqref{eq:claim21} above. The details are omitted.

We are now in position to conclude: as explained above, it suffices to combine the error estimates~\eqref{eq:claim11},~\eqref{eq:claim12},
~\eqref{eq:claim21} and~\eqref{eq:claim22} with the decomposition~\eqref{eq:decompweakerrortamedProof} to obtain~\eqref{eq:weakerrortamed}. The proof of Theorem~\ref{theo:tamed} is thus completed.
\end{proof}

\section{Proof of Theorem~\ref{theo:momentbounds-tamed}}\label{sec:proofmomentsfull}

The objective of this section is to provide the proof of Theorem~\ref{theo:momentbounds-tamed}, giving moment bounds for $X_{N,k}$ with $0\le t_k\le T$. In fact, one proves the stronger version of the result, see Theorem~\ref{theo:momentbounds-Xtilde}, giving moment bounds for $\tilde{X}_N^{\Delta t}(t)$ with $0\le t\le T$ (recall that $\tilde{X}_N^{\Delta t}(t_k)=X_{N,k}$).

The proof requires several steps and the introduction of auxiliary processes. First, introduce Gaussian auxiliary processes $\tilde{Z}_N^{\Delta t}$ defined as follows: for all $N\in\N$, $\Delta t=T/K$, $k\in\{0,\ldots,K-1\}$ and $t\in[t_k,t_{k+1}]$, set
\begin{equation}\label{eq:Ztilde}
\begin{aligned}
Z_{N,k+1}&=e^{-\Delta tA^2}\bigl(Z_{N,k}+P_N\Delta W_k^Q\bigr),\\
\tilde{Z}_N^{\Delta t}(t)&=e^{-(t-t_k)A^2}\bigl(Z_{N,k}+P_N(W^Q(t)-W^Q(t_k))\bigr),
\end{aligned}
\end{equation}
with initial values $Z_{N,0}=\tilde{Z}_N^{\Delta t}(0)=0$. One has the following auxiliary result. 
\begin{lemma}\label{lem:tamedGaussian}
For all $T\in(0,\infty)$, $m\in\{1,\ldots\}$ and $\gamma\in[0,\Gamma)$, there exists $q\in\N$ and $C_{\gamma,m}(T)\in(0,\infty)$ such that for all $N\in\N$ and all $\Delta t=T/K$ with $K\in\N$, one has
\begin{equation}\label{eq:lemtamedGaussian}
\bigl(\E[\underset{0\le t\le T}\sup~\|\tilde{Z}_N^{\Delta t}(t)\|_\gamma^m]\bigr)^{\frac1m}\le C_{\gamma,m}(T).
\end{equation}
\end{lemma}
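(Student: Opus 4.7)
The process $\tilde Z_N^{\Delta t}$ defined by~\eqref{eq:Ztilde} admits, after unfolding the recursion, the stochastic integral representation
\[
\tilde Z_N^{\Delta t}(t)=\int_0^t e^{-(t-t_{\ell(s)})A^2}P_N\,dW^Q(s),\qquad t\in[0,T],
\]
where $\ell(s)$ denotes the integer such that $s\in[t_{\ell(s)},t_{\ell(s)+1})$. In particular $\tilde Z_N^{\Delta t}$ is a centered Gaussian process with values in the separable Hilbert space $\HH^\gamma$. The Kahane--Khintchine inequality therefore reduces $L^m$-bounds, both pointwise in $t$ and for the final supremum, to their $L^2$-counterparts (up to constants depending on $m$), so the plan is to establish a uniform pointwise second moment bound and a Hölder-in-time second moment bound on the increments, and then to invoke Kolmogorov's continuity criterion.

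The pointwise bound is elementary. Using that $e^{-\tau A^2}$ and $P_N$ are diagonal in the basis $(e_j)_{j\ge 0}$, It\^o's isometry yields, for $j\ge 1$,
\[
\E\bigl[\langle\tilde Z_N^{\Delta t}(t),e_j\rangle^2\bigr]=\|Q^{1/2}e_j\|^2\int_0^t e^{-2(t-t_{\ell(s)})\lambda_j^2}\,ds\le\frac{\|Q^{1/2}e_j\|^2}{2\lambda_j^2},
\]
relying on the crucial inequality $t-t_{\ell(s)}\ge t-s$, while $\langle\tilde Z_N^{\Delta t}(t),e_0\rangle=\langle W^Q(t),e_0\rangle$ has variance $t\langle Qe_0,e_0\rangle$. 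Summing over $j$ gives
\[
\E\bigl[\|\tilde Z_N^{\Delta t}(t)\|_\gamma^2\bigr]\le\tfrac12\sum_{j\ge 1}\lambda_j^{\gamma-2}\|Q^{1/2}e_j\|^2+T\langle Qe_0,e_0\rangle,
\]
which is finite uniformly in $(N,\Delta t,t)$ precisely under Assumption~\ref{ass:Q} with $\gamma<\Gamma$: in the trace-class case $\lambda_j^{\gamma-2}\le\lambda_1^{\gamma-2}$ so the sum is bounded by $\lambda_1^{\gamma-2}\mathrm{Tr}(Q)$, and in the white-noise case $Q=I$ with $\lambda_j\sim c j^2$ in $d=1$ so that $\sum_j\lambda_j^{\gamma-2}$ converges exactly when $\gamma<3/2=\Gamma$.

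To pass to the supremum, I would establish the Hölder increment estimate $\E[\|\tilde Z_N^{\Delta t}(t)-\tilde Z_N^{\Delta t}(s)\|_\gamma^2]\le C_\gamma|t-s|^{2\delta}$ for some $\delta\in(0,(\Gamma-\gamma)/2)$, uniformly in $N,\Delta t$. Splitting the increment as
\[
\int_0^s\bigl(e^{-(t-t_{\ell(r)})A^2}-e^{-(s-t_{\ell(r)})A^2}\bigr)P_N\,dW^Q(r)+\int_s^t e^{-(t-t_{\ell(r)})A^2}P_N\,dW^Q(r),
\]
applying It\^o's isometry, and using $|e^{-a}-e^{-b}|\le e^{-b}\min(1,a-b)$ for $a\ge b\ge 0$, $\min(x,1)\le x^\theta$, and the interpolation $\min(t-s,(2\lambda_j^2)^{-1})\le(t-s)^{1-\theta'}(2\lambda_j^2)^{-\theta'}$ for suitable $\theta,\theta'\in[0,1]$, produces an eigenvalue sum of the form $\sum_j\lambda_j^{\gamma+2\delta-2}\|Q^{1/2}e_j\|^2$, which is finite for $\delta$ strictly less than $(\Gamma-\gamma)/2$. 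Kahane--Khintchine then upgrades this to $\E[\|\tilde Z_N^{\Delta t}(t)-\tilde Z_N^{\Delta t}(s)\|_\gamma^{2p}]\le C_{p,\gamma}|t-s|^{2p\delta}$ for every $p\ge 1$; choosing $p$ so large that $2p\delta>1$, Kolmogorov's continuity criterion in its quantitative form (using $\tilde Z_N^{\Delta t}(0)=0$) delivers a uniform bound on $\E[\sup_{0\le t\le T}\|\tilde Z_N^{\Delta t}(t)\|_\gamma^{2p}]$, from which~\eqref{eq:lemtamedGaussian} follows for every $m$ by monotonicity of moments. The one delicate point of the argument is precisely the extraction of a strictly positive Hölder exponent $\delta$, which is possible exactly under $\gamma<\Gamma$ and mirrors the analogous restriction for the continuous stochastic convolution~\eqref{eq:Z}; substituting the discrete kernel $e^{-(t-t_{\ell(s)})A^2}$ for the continuous one $e^{-(t-s)A^2}$ causes no loss since $t-t_{\ell(s)}\ge t-s$ provides only additional damping, so the admissible range of $\gamma$ coincides with that for $Z$.
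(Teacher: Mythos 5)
Your proposal is correct and follows essentially the same route as the paper: represent $\tilde{Z}_N^{\Delta t}$ as the stochastic integral $\int_0^t e^{-(t-t_{\ell(r)})A^2}P_N\,dW^Q(r)$, obtain uniform pointwise and H\"older-in-time second-moment bounds via It\^o's isometry (exploiting $t-t_{\ell(r)}\ge t-r$ and the condition $\gamma<\Gamma$), and conclude by Gaussianity together with the Kolmogorov continuity criterion. The only cosmetic difference is that the paper factors the first increment term as $(e^{-(t-s)A^2}-I)\tilde{Z}_N^{\Delta t}(s)$ and invokes~\eqref{eq:regul} with an auxiliary bound in the $\|\cdot\|_{\frac{\gamma+\Gamma}{2}}$ norm, whereas you estimate the kernel difference inside the integral directly; both yield the same positive H\"older exponent.
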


\begin{proof}
The moment bound is a consequence of the following auxiliary bounds on the increments of the process $\tilde{Z}_N^{\Delta t}$, using the fact that this process is Gaussian and the Kolmogorov regularity criterion: for all $\gamma\in[0,\Gamma)$, there exists $C_{\gamma}(T)\in(0,\infty)$ such that for all $N\in\N$ and all $\Delta t=T/K$ with $K\in\N$, one has
\begin{equation}\label{eq:incrementstamedGaussian}
\underset{0\le s<t\le T}\sup~\frac{\E[\|\tilde{Z}_N^{\Delta t}(t)-\tilde{Z}_N^{\Delta t}(s)\|_\gamma^2]}{|t-s|^{\frac{\Gamma-\gamma}{4}}}\le C_{\gamma}(T).
\end{equation}

In the sequel, the objective is to prove the inequality~\eqref{eq:incrementstamedGaussian}. Note that $\tilde{Z}_N^{\Delta t}$ is {the} solution of the stochastic evolution equation
\[
d\tilde{Z}_N^{\Delta t}(t)=-A^2\tilde{Z}_N^{\Delta t}(t)dt+e^{-(t-t_{\ell(t)})A^2}dW^Q(t),
\]
where we recall that $\ell(t)=k$ if $t\in[t_k,t_{k+1})$. Therefore for all $t\in[0,T]$ one has
\[
\tilde{Z}_N^{\Delta t}(t)=\int_{0}^{t}e^{-(t-t_{\ell(r)})A^2}P_NdW^Q(r),
\]
and for all $0\le s<t\le T$, one has
\[
\tilde{Z}_N^{\Delta t}(t)-\tilde{Z}_N^{\Delta t}(s)=(e^{-(t-s)A^2}-I)\tilde{Z}_N^{\Delta t}(s)+\int_{s}^{t}e^{-(t-t_{\ell(r)})A^2}P_NdW^Q(r).
\]
We claim that the following auxiliary moment bound holds: there exists $C_\gamma(T)\in(0,\infty)$ such that for all $N\in\N$ and $\Delta t=T/K$ one has
\begin{equation}\label{eq:boundZNDeltat}
\underset{0\le s\le T}\sup~\E[\|\tilde{Z}_N^{\Delta t}(s)\|_\gamma^2]\le C_{\gamma}(T).
\end{equation}
Let us prove~\eqref{eq:boundZNDeltat}. Applying It\^o's isometry formula, one obtains for all $s\in[0,T]$
\[
\E[\|\tilde{Z}_N^{\Delta t}(s)\|_\gamma^2]=\int_0^s \|A^{\frac{\gamma}{2}}e^{-(s-t_{\ell(r)})A^2}P_NQ^{\frac12}\|_{\mathcal{L}_2(H)}^2 dr.
\]
On the one hand, in the space-time white noise case (Assumption~\ref{ass:Q}-$(i)$, $d=1$, $Q=I$ and $\Gamma=\frac32$), for all $s\in[0,T]$, one has
\begin{align*}
\E[\|\tilde{Z}_N^{\Delta t}(s)\|_\gamma^2]&=\int_0^s \sum_{j\in\N}\lambda_j^{\gamma}e^{-2(s-t_{\ell(r)})\lambda_j^2} dr\\
&\le \int_0^s \sum_{j\in\N}\lambda_j^{-\frac12-\frac{\Gamma-\gamma}{2}} \lambda_j^{\frac12+\frac{\gamma+\Gamma}{2}}e^{-2(s-r)\lambda_j^2} dr\\
&\le C_{\gamma}\int_0^s (s-r)^{-\frac14-\frac{\gamma+\Gamma}{4}}dr\\
&\le C_{\gamma}(T).
\end{align*}
On the other hand, in the trace-class noise case (Assumption~\ref{ass:Q}-$(ii)$, $d\in\{1,2,3\}$, ${\rm Tr}(Q)<\infty$ and $\Gamma=2$), for all $s\in[0,T]$, one has
\begin{align*}
\E[|\tilde{Z}_N^{\Delta t}(s)|_\gamma^2]&\le \int_0^s \|A^{\frac{\gamma}{2}}e^{-(s-t_{\ell(r)})A^2}P_N\|_{\mathcal{L}(H)}^2\|Q^{\frac12}\|_{\mathcal{L}_2(H)}^2dr\\
&\le C_\gamma \int_0^s (s-r)^{-\frac{\gamma}{2}}ds {\rm Tr}(Q)\\
&\le C_{\gamma}(T),
\end{align*}
using the smoothing inequality~\eqref{eq:smoothing-HH}. This concludes the proof of the auxiliary moment bound~\eqref{eq:boundZNDeltat}. We are now in position to prove the claim.

For all $0\le s<t\le T$, applying It\^o's isometry formula, one has
\begin{align*}
\E[\|\tilde{Z}_N^{\Delta t}(t)-\tilde{Z}_N^{\Delta t}(s)\|_\gamma^2]&=\E[\|(e^{-(t-s)A^2}-I)\tilde{Z}_N^{\Delta t}(s)\|_\gamma^2]\\
&+\E[\|\int_{s}^{t}e^{-(t-t_{\ell(r)})A^2}P_NdW^Q(r)\|_\gamma^2]\\
&\le \E[\|(e^{-(t-s)A^2}-I)\tilde{Z}_N^{\Delta t}(s)\|_\gamma^2]\\
&+\int_s^t \|A^{\frac{\gamma}{2}}e^{-(t-t_{\ell(r)})A^2}P_NQ^{\frac12}\|_{\mathcal{L}_2(H)}^2 dr.
\end{align*}

On the one hand, using the inequality~\eqref{eq:regul} and applying the auxiliary moment bound~\eqref{eq:boundZNDeltat} gives
\[
\E[\|(e^{-(t-s)A^2}-I)\tilde{Z}_N^{\Delta t}(s)\|_\gamma^2]\le C_{\gamma}(T)|t_2-t_1|^{\frac{\Gamma-\gamma}{4}}\E[\|\tilde{Z}_N^{\Delta t}(s)\|_{\frac{\gamma+\Gamma}{2}}^2]\le C_{\gamma}(T)|t_2-t_1|^{\frac{\Gamma-\gamma}{4}}.
\]
On the other hand, like in the proof of the inequality~\eqref{eq:boundZNDeltat}, two cases need to be considered. First, in the space-time white noise case (Assumption~\ref{ass:Q}-$(i)$, $d=1$, $\Gamma=\frac32$), applying It\^o's isometry formula, one has for all $0\le s<t\le T$,
\begin{align*}
\E[\|\int_{s}^{t}e^{-(t-t_{\ell(r)})A^2}P_NdW^Q(r)\|_\gamma^2]&=\int_s^t \|A^{\frac{\gamma}{2}}e^{-(t-t_{\ell(r)})A^2}P_NQ^{\frac12}\|_{\mathcal{L}_2(H)}^2 dr\\
&=\int_0^s \sum_{j\in\N}\lambda_j^{\gamma}e^{-2(s-t_{\ell(r)})\lambda_j^2} dr\\
&\le \int_s^t \sum_{j\in\N}\lambda_j^{-\frac12-\frac{\Gamma-\gamma}{2}} \lambda_j^{\frac12+\frac{\gamma+\Gamma}{2}}e^{-2(t-t_{\ell(r)})\lambda_j^2} dr\\
&\le C_{\gamma}\int_s^t (t-r)^{-\frac14-\frac{\gamma+\Gamma}{2}}dr\\
&\le C_{\gamma}(T)(t-s)^{\frac{\Gamma-\gamma}{4}},
\end{align*}
using the identity $\frac34-\frac{\gamma+\Gamma}{4}=\frac38-\frac{\gamma}{4}=\frac{\Gamma-\gamma}{4}$ since $\Gamma=\frac32$.

Second, in the trace-class noise case (Assumption~\ref{ass:Q}-$(ii)$, $d\in\{1,2,3\}$, $\Gamma=2$), applying It\^o's isometry formula, one has for all $0\le s<t\le T$,
\begin{align*}
\E[\|\int_{s}^{t}e^{-(t-t_{\ell(r)})A^2}P_NdW^Q(r)\|_\gamma^2]&=\int_s^t \|A^{\frac{\gamma}{2}}e^{-(t-t_{\ell(r)})A^2}P_NQ^{\frac12}\|_{\mathcal{L}_2(H)}^2 dr\\
&\le \int_s^t \|A^{\frac{\gamma}{2}}e^{-(t-t_{\ell(r)})A^2}P_N\|_{\mathcal{L}(H)}^2\|Q^{\frac12}\|_{\mathcal{L}_2(H)}^2dr\\
&\le C_\gamma \int_s^t (t-r)^{-\frac{\gamma}{2}}ds {\rm Tr}(Q)\\
&\le C_{\gamma}(T)|t-s|^{\frac{\Gamma-\gamma}{2}},
\end{align*}
using the identity $1-\frac{\gamma}{2}=\frac{\Gamma-\gamma}{2}$ since $\Gamma=2$.

The proof of the inequality~\eqref{eq:incrementstamedGaussian} is thus completed. Applying the Kolmogorov regularity criterion then concludes the proof of Lemma~\ref{lem:tamedGaussian}.
\end{proof}

Recall that $\tilde{X}_N^{\Delta t}(0)=X_{N,0}=P_Nx_0$. Let us now define additional auxiliary processes as follows. For all $t\ge 0$, define
\begin{align*}
&\tilde{Z}_N^{\Delta t;x_0}(t)=e^{-tA^2}P_Nx_0+\tilde{Z}_N^{\Delta t}(t)\\
&\tilde{Y}_N^{\Delta t}(t)=\tilde{X}_N^{\Delta t}(t)-\tilde{Z}_N^{\Delta t;x_0}(t).
\end{align*}
Using the definition~\eqref{eq:Xtilde} of $\tilde{X}_N^{\Delta t}$ and the definition~\eqref{eq:Ztilde} of $\tilde{Z}_N^{\Delta t}$, observe that for all $t\in[0,T]$, one has
\begin{equation}\label{eq:Ytilde}
\tilde{Y}_N^{\Delta t}(t)=\int_0^t e^{-(t-s)A^2}\frac{-AP_NF(X_{\ell(s)})}{1+\Delta t\|P_NF(X_{\ell(s)})\|}ds.
\end{equation}
Finally, define the auxiliary processes $\tilde{R}_N^{\Delta t}$ and $\tilde{r}_N^{\Delta t}$ as follows: for all $t\in[0,T]$, set
\begin{equation}\label{eq:Rrtilde}
\begin{aligned}
&\tilde{R}_N^{\Delta t}(t)=-\int_0^t e^{-(t-s)A^2}AP_NF\bigl(\tilde{Y}_N^{\Delta t}(s)+\tilde{Z}_N^{\Delta t;x_0}(t_{\ell(s)})\bigr)ds\\
&\tilde{r}_N^{\Delta t}(t)=\tilde{Y}_N^{\Delta t}(t)-\tilde{R}_N^{\Delta t}(t).
\end{aligned}
\end{equation}

The strategy of the proof of Theorem~\ref{theo:momentbounds-Xtilde} is straightforward. First, one proves Lemma~\ref{lem:tamed-r} and Lemma~\ref{lem:tamed-R} below to obtain some moment bounds for the auxiliary processes $\tilde{r}_{N}^{\Delta t}$ and $\tilde{R}_N^{\Delta t}$. Combining these results gives moment bounds for $\tilde{Y}_{N}^{\Delta t}$, and using Lemma~\ref{lem:tamedGaussian} one obtains moment bounds for $\tilde{X}_{N}^{\Delta t}$. However, one cannot directly prove the moment bounds, and first one needs to prove moment bounds on some well-chosen events $\Omega_k^{\Delta t}$ defined below. The proof of Theorem~\ref{theo:momentbounds-Xtilde} below shows how to remove the indicator functions of those events. The arguments above are standard in the proof of moment bounds for tamed Euler schemes applied to SDEs and SPDEs. The arguments which are specific to the Cahn--Hilliard equation are given in the proof of Lemma~\ref{lem:tamed-R}, which is the most technical part of the analysis. Note that the techniques used to prove Lemma~\ref{lem:tamed-R} are similar to those needed to prove the moment bounds~\eqref{eq:momentsX-Galerkin} for $X_N(t)$ (see Section~\ref{sec:Galerkin}).

Let $\gamma\in(\Gamma_0,\Gamma)$ be given. Recall that this ensures that the conditions $\gamma\in(\frac{d}{2},\Gamma)\setminus\{\frac32\}$ and $\gamma\ge 1+\frac{d}{4}$ if $\Gamma=2$ are satisfied. Let $\theta\in(0,1)$ be a sufficiently small auxiliary parameter, upper bounds on $\theta$ are given below in the analysis. For all $k\in\{0,\ldots,K\}$, define the event
\[
\Omega_{N,k}^{\Delta t}=\{\underset{0\le \ell \le k}\sup~\|X_{N,\ell}\|\le \Delta t^{-\theta}\},
\]
and define the associated indicator functions
\[
\chi_{N,k}^{\Delta t}=\mathds{1}_{\Omega_{N,k}^{\Delta t}}.
\]

By convention, set $\Omega_{N,-1}^{\Delta t}=\Omega$. Let us now state the auxiliary moment bounds for $\tilde{r}_N^{\Delta t}$ and $\tilde{R}_N^{\Delta t}$.

\begin{lemma}\label{lem:tamed-r}
For all $\gamma\in(\Gamma_0,\Gamma)$, there exists a sufficiently small $\theta\in(0,1)$ such that
\begin{equation}
\underset{0\le k\le K}\sup~\bigl(\E[\chi_{N,k-1}^{\Delta t}\underset{0\le t \le t_k}\sup~\|\tilde{r}_N^{\Delta t}(t)\|_\gamma^m]\bigr)^{\frac1m}\le C_m(T)(1+\|x_0\|_\gamma^q).
\end{equation}
\end{lemma}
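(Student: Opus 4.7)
The plan is to decompose $\tilde{r}_N^{\Delta t}$ into two contributions that capture, respectively, the temporal lag $\tilde{Y}_N^{\Delta t}(s)-\tilde{Y}_N^{\Delta t}(t_{\ell(s)})$ and the correction introduced by the taming factor, and then to estimate each in the $\|\cdot\|_\gamma$ norm via the smoothing property~\eqref{eq:smoothing-HH}. Using the identity $\frac{1}{1+y}=1-\frac{y}{1+y}$ with $y=\Delta t\|P_NF(X_{N,\ell(s)})\|$, together with $X_{N,\ell(s)}=\tilde{Y}_N^{\Delta t}(t_{\ell(s)})+\tilde{Z}_N^{\Delta t;x_0}(t_{\ell(s)})$, I will write $\tilde{r}_N^{\Delta t}(t)=J_1(t)+J_2(t)$, where
\begin{align*}
J_1(t)&=\int_0^t e^{-(t-s)A^2}AP_N\Bigl[F\bigl(\tilde{Y}_N^{\Delta t}(s)+\tilde{Z}_N^{\Delta t;x_0}(t_{\ell(s)})\bigr)-F(X_{N,\ell(s)})\Bigr]\,ds,\\
J_2(t)&=\int_0^t e^{-(t-s)A^2}AP_NF(X_{N,\ell(s)})\,\frac{\Delta t\|P_NF(X_{N,\ell(s)})\|}{1+\Delta t\|P_NF(X_{N,\ell(s)})\|}\,ds.
\end{align*}
Taking the $\|\cdot\|_\gamma$ norm and applying~\eqref{eq:smoothing-HH} with $\alpha=\gamma+2<4$ produces the integrable singularity $(t-s)^{-(2+\gamma)/4}$ in each integral.

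For $J_2$, the elementary inequality $y/(1+y)\le y^{a}$ valid for all $y\ge 0$ and $a\in[0,1]$ replaces the taming factor by $(\Delta t\|P_NF(X_{N,\ell(s)})\|)^{a}$, yielding a prefactor $\Delta t^{a}$ multiplied by $\|F(X_{N,\ell(s)})\|^{1+a}$. For $J_1$, the cubic identity $F(u)-F(v)=(u-v)(u^2+uv+v^2-1)$ combined with the Sobolev embedding~\eqref{eq:Sobolev-norm_alpha} (valid since $\gamma>d/2$) gives $\|F(u)-F(v)\|\le C_\gamma(1+\|u\|_\gamma^2+\|v\|_\gamma^2)\|u-v\|$ with $u-v=\tilde{Y}_N^{\Delta t}(s)-\tilde{Y}_N^{\Delta t}(t_{\ell(s)})$; the latter increment is split using~\eqref{eq:Ytilde} into a semigroup contribution (to which~\eqref{eq:regul} delivers a factor $\Delta t^{\gamma/4}$ times $\|\tilde{Y}_N^{\Delta t}(t_{\ell(s)})\|_\gamma$) and a drift contribution (of order $\Delta t$ up to polynomial growth in $\|F(X_{N,\ell(s)})\|$).

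The polynomial expressions in $\|F(X_{N,\ell(s)})\|$ and in the $\HH^\gamma$-norms appearing in both bounds are controlled using: the event-based bound $\chi_{N,k-1}^{\Delta t}\|X_{N,\ell(s)}\|\le\Delta t^{-\theta}$ for $s\le t_k$; the Sobolev estimate $\|F(v)\|\le C_\gamma(1+\|v\|_\gamma^2)\|v\|$; the splitting $X_{N,\ell}=\tilde{Y}_N^{\Delta t}(t_\ell)+\tilde{Z}_N^{\Delta t;x_0}(t_\ell)$; and the $\HH^\gamma$-moment bounds for $\tilde{Z}_N^{\Delta t;x_0}$ from Lemma~\ref{lem:tamedGaussian}. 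After taking $L^m$-expectations and applying H\"older's inequality, each contribution is of the form $C_m(T)(1+\|x_0\|_\gamma^q)\Delta t^{\beta-\kappa\theta}$ with $\beta=a$ for $J_2$ (respectively $\beta=\gamma/4$ for $J_1$) and $\kappa$ depending on the polynomial degree of the cubic nonlinearity; choosing $\theta\in(0,1)$ sufficiently small (and $a$ appropriately in $J_2$) so that $\kappa\theta\le\beta$ yields the claimed $\Delta t$-uniform bound.

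The hard part will be the careful tracking of polynomial powers so as to avoid $N$-dependent constants: one must systematically use the infinite-dimensional Sobolev embedding $\HH^\gamma\subset L^\infty$ rather than any finite-dimensional inverse estimate on $\HH_N$, and the $\HH^\gamma$-control of $X_{N,\ell(s)}$ must be routed through the decomposition $X_{N,\ell}=\tilde{Y}_N^{\Delta t}(t_\ell)+\tilde{Z}_N^{\Delta t;x_0}(t_\ell)$ so that the stochastic convolution moments from Lemma~\ref{lem:tamedGaussian} can absorb the noise contribution while the remaining $\tilde{Y}_N^{\Delta t}(t_\ell)$ piece is handled in conjunction with the a priori bound on the event.
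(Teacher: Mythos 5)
Your decomposition is exactly the paper's ($J_2$ is its $\tilde{r}_1$, the taming correction, and $J_1$ is its $\tilde{r}_2$, the time-lag term), and the estimates run along the same lines (smoothing of $e^{-tA^2}A$, Sobolev/algebra control of the cubic nonlinearity, truncation on the event, and Lemma~\ref{lem:tamedGaussian} for the Gaussian part); the only cosmetic differences are your use of $y/(1+y)\le y^a$ where the paper simply takes $a=1$, and your measuring the $F$-increment in $L^2$ with smoothing exponent $2+\gamma$ where the paper measures it in $\HH^{\gamma-\kappa}$ with exponent $2+\kappa$ for a small auxiliary $\kappa$. The one point to make explicit is that the truncation event must be understood as controlling $\|X_{N,\ell}\|_\gamma\le\Delta t^{-\theta}$ (this is how the paper's computations actually use it), since a bound on $\|X_{N,\ell}\|$ alone does not yield the needed $\Delta t^{-c\theta}$ control of $\|F(X_{N,\ell})\|_\gamma$ or of $\|\tilde{Y}_N^{\Delta t}(t_{\ell})\|_\gamma$.
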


\begin{lemma}\label{lem:tamed-R}
For all $\gamma\in(\Gamma_0,\Gamma)$, there exists a sufficiently small $\theta\in(0,1)$ such that
\begin{equation}
\underset{0\le k\le K}\sup~\bigl(\E[\chi_{N,k-1}^{\Delta t}\underset{0\le t \le t_k}\sup~\|\tilde{R}_N^{\Delta t}(t)\|_\gamma^m]\bigr)^{\frac1m}\le C_m(T)(1+\|x_0\|_\gamma^q).
\end{equation}
\end{lemma}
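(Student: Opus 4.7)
The plan is to rewrite $\tilde{R}_N^{\Delta t}$ as the mild solution of the Cahn--Hilliard-type evolution equation
\begin{equation*}
\frac{d}{dt}\tilde{R}_N^{\Delta t}(t)+A^2\tilde{R}_N^{\Delta t}(t)=-AP_NF\bigl(\tilde{R}_N^{\Delta t}(t)+\tilde{r}_N^{\Delta t}(t)+\tilde{Z}_N^{\Delta t;x_0}(t_{\ell(t)})\bigr),
\end{equation*}
with $\tilde{R}_N^{\Delta t}(0)=0$, obtained by substituting $\tilde{Y}_N^{\Delta t}=\tilde{r}_N^{\Delta t}+\tilde{R}_N^{\Delta t}$ in the integrand of \eqref{eq:Rrtilde}. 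Since the right-hand side takes values in $\hh$, one has $\tilde{R}_N^{\Delta t}(t)\in\hh$ for all $t\ge 0$. The key point of this rewriting is that $\tilde{R}_N^{\Delta t}$ is the solution of a Cahn--Hilliard equation whose nonlinearity is evaluated at the shift by a random function $G(t):=\tilde{r}_N^{\Delta t}(t)+\tilde{Z}_N^{\Delta t;x_0}(t_{\ell(t)})$, whose moments on $\Omega_{N,k-1}^{\Delta t}$ are controlled by Lemma~\ref{lem:tamed-r} and Lemma~\ref{lem:tamedGaussian}, provided $\theta$ is taken small enough that the taming factor hidden in $\tilde{r}_N^{\Delta t}$ stays bounded on this event.

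First I would derive an $H^{-1}$ energy estimate. Pairing the equation with $A^{-1}\tilde{R}_N^{\Delta t}$, which is legitimate since $\tilde{R}_N^{\Delta t}\in\hh$, yields
\begin{equation*}
\frac{1}{2}\frac{d}{dt}\|A^{-\frac{1}{2}}\tilde{R}_N^{\Delta t}\|^2+\|A^{\frac{1}{2}}\tilde{R}_N^{\Delta t}\|^2=-\langle F(\tilde{R}_N^{\Delta t}+G),\tilde{R}_N^{\Delta t}\rangle.
\end{equation*}
Writing $\tilde{R}_N^{\Delta t}=(\tilde{R}_N^{\Delta t}+G)-G$ and invoking the dissipativity bounds \eqref{eq:ineqF1}--\eqref{eq:ineqF2} transfers the cubic control onto $\|\tilde{R}_N^{\Delta t}+G\|_{L^4}^4$, modulo a remainder of the form $C(1+\|G\|_{L^4}^4)$. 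Combined with the interpolation \eqref{eq:ineq1} and Gronwall's lemma, this yields bounds for $\E[\chi_{N,k-1}^{\Delta t}\sup_{t\le t_k}\|\tilde{R}_N^{\Delta t}(t)\|^m]$ depending polynomially on $\|x_0\|_\gamma$. An $L^2$ energy estimate, analogous to the one implicit in \eqref{eq:momentsX-Galerkin}, then controls $\E[\chi_{N,k-1}^{\Delta t}\int_0^{t_k}\|A\tilde{R}_N^{\Delta t}(s)\|^2\,ds]$; when $d\in\{2,3\}$ the energy functional $J$ of \eqref{eq:energy} serves as the natural Lyapunov quantity, whereas when $d=1$ the Gagliardo--Nirenberg inequality \eqref{eq:L6-d1-GN} suffices.

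The main obstacle is the bootstrap to the target norm $\|\cdot\|_\gamma$. For this step I would return to the mild formulation and apply the smoothing inequality \eqref{eq:smoothing-hh}:
\begin{equation*}
\|\tilde{R}_N^{\Delta t}(t)\|_\gamma\le C_\gamma\int_0^t(t-s)^{-\frac{1}{2}-\frac{\gamma}{4}}\|F(\tilde{R}_N^{\Delta t}(s)+G(s))\|\,ds.
\end{equation*}
The cubic integrand is estimated via the Sobolev embeddings \eqref{eq:L6-d1}--\eqref{eq:L6-d} combined with the $L^2$ and $\|\cdot\|_1$ bounds established above, and the exponent $\frac{1}{2}+\frac{\gamma}{4}<1$ (since $\gamma<\Gamma\le 2$) keeps the kernel integrable, so a Gronwall-type iteration closes the estimate. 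The indicator $\chi_{N,k-1}^{\Delta t}$ threads through every step: it ensures, via Lemma~\ref{lem:tamed-r}, that the perturbation $G$ has polynomial-in-$\|x_0\|_\gamma$ moments uniformly in $N$ and $\Delta t$, and provided $\theta>0$ is chosen small enough (e.g.\ so that $1-3\theta>0$), the taming factor stays close to one on this event without deteriorating the constants.
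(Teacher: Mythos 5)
Your plan follows the paper's proof essentially step by step: the same $H^{-1}$ and $L^2$ energy estimates for the shifted Cahn--Hilliard equation satisfied by $\tilde{R}_N^{\Delta t}$, the same dimension split (Gagliardo--Nirenberg~\eqref{eq:L6-d1-GN} for $d=1$, the energy functional $J$ and the embedding $H^1\subset L^6$ for $d\in\{2,3\}$) to control the $L^6$ norm of $\tilde{R}_N^{\Delta t}$, and the same final bootstrap via the mild formulation and the smoothing inequality with kernel $(t-s)^{-\frac12-\frac{\gamma}{4}}$. The approach is correct and matches the paper's argument.
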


The value of the integer $q$ in the two statements below does not depend on the value of the regularity parameter $\gamma$.

To simplify the notation in the proofs below, set $\chi_k=\chi_{N,k}^{\Delta t}$, $\tilde{X}(t)=\tilde{X}_N^{\Delta t}(t)$, $X_k=X_{N,k}$, $\tilde{Z}^{x_0}(t)=\tilde{Z}_N^{\Delta t;x_0}(t)$, $\tilde{Y}(t)=\tilde{Y}_N^{\Delta t}(t)$, $\tilde{R}(t)=\tilde{R}_N^{\Delta t}(t)$ and $\tilde{r}(t)=\tilde{r}_N^{\Delta t}(t)$. All the upper bounds are uniform with respect to the omitted parameters $N\in\N$ and $\Delta t=T/K$, $K\in\N$.

\begin{proof}[Proof of Lemma~\ref{lem:tamed-r}]
Note that for all $t\in[0,T]$, one has
\[
\tilde{r}(t)=\tilde{r}_1(t)+\tilde{r}_2(t),
\]
where
\begin{align*}
\tilde{r}_1(t)&=-\int_0^te^{-(t-s)A^2}\frac{-\Delta t\|F(X_\ell(s))\|}{1+\Delta t\|P_NF(X_{\ell(s)})\|}AP_NF(X_{\ell(s)})ds\\
\tilde{r}_2(t)&=-\int_0^te^{-(t-s)A^2}AP_N\Bigl(F(\tilde{Y}(t_{\ell(s)})+\tilde{Z}^{x_0}(t_{\ell(s)}))-F(\tilde{Y}(s)+\tilde{Z}^{x_0}(t_{\ell(s)}))\Bigr)ds.
\end{align*}

Assume that the condition $\theta<1/6$ is satisfied. First, for all $k\in\{0,\ldots,K\}$ and all $t\in[0,t_k]$, one has
\begin{align*}
\chi_{k-1}\|\tilde{r}_1(t)\|_\gamma&\le\chi_{k-1} \int_0^t (t-s)^{-\frac12}\Delta t\|F(X_{\ell(s)})\|_\gamma^2 ds\\
&\le C\chi_{k-1} \int_0^t (t-s)^{-\frac12}\Delta t(1+\|X_{\ell(s)}\|_\gamma^6) ds\\
&\le Ct^{\frac12} \Delta t^{1-6\theta}\\
&\le C(T).
\end{align*}
This yields the inequality
\[
\underset{0\le k\le K}\sup~\bigl(\E[\chi_{k-1}\underset{0\le t \le t_k}\sup~\|\tilde{r}_1(t)\|_\gamma^m]\bigr)^{\frac1m}\le C_m(T).
\]
It remains to prove a similar upper bound for $\tilde{r}_2(t)$. Introduce an auxiliary parameter $\kappa\in(0,\gamma-\frac{d}{2})$. One then has, for all $t\le t_k$,
\begin{align*}
\chi_{k-1}\|\tilde{r}_2(t)\|_\gamma&\le C_\kappa\chi_{k-1}\int_0^t (t-s)^{-\frac12-\frac{\kappa}{4}}\|F(\tilde{Y}(t_{\ell(s)})+\tilde{Z}^{x_0}(t_{\ell(s)}))-F(\tilde{Y}(s)+\tilde{Z}^{x_0}(t_{\ell(s)}))\|_{\gamma-\kappa}ds\\
&\le C_\kappa\chi_{k-1}\int_{0}^{t}(t-s)^{-\frac12-\frac{\kappa}{4}}\|\tilde{Y}(t_{\ell(s)})-\tilde{Y}(s)\|_{\gamma-\kappa}\mathcal{M}(s)ds,
\end{align*}
where
\[
\mathcal{M}(s)=1+\|\tilde{Y}(t_{\ell(s)})\|_{\gamma-\kappa}^2+\|\tilde{Y}(s)\|_{\gamma-\kappa}^2+\|\tilde{Z}^{x_0}(t_{\ell(s)})\|_{\gamma-\kappa}^2.
\]

On the one hand, one has
\begin{align*}
\chi_{k-1}\|\tilde{Y}(t_{\ell(s)})-\tilde{Y}(s)\|_{\gamma-\kappa}&\le \chi_{k-1}\|(e^{-(s-t_{\ell(s)})A^2}-I)\tilde{Y}(t_{\ell(s)})\|_{\gamma-\kappa}\\
&+\chi_{k-1}\int_{t_{\ell(s)}}^{s}\frac{\|e^{-(s-r)A^2}AP_NF(X_{\ell(r)})\|_\gamma}{1+\Delta t\|P_NF(X_{\ell(r)})\|}dr\\
&\le C_\kappa\Delta t^{\frac{\kappa}{4}}\chi_{k-1}\|\tilde{Y}(t_{\ell(s)})\|_{\gamma}+\Delta t^{\frac12}\chi_{k-1}(1+\|X_{\ell(s)}\|_\gamma^3)\\
&\le C_\kappa\Delta t^{\min(\frac{\kappa}{4},\frac12)-3\theta}+C_\kappa\Delta t^{\frac{\kappa}{4}}\|\tilde{Z}^{x_0}(t_{\ell(s)})\|_\gamma.
\end{align*}
On the other hand, one has for all $s\in[0,t_k)$
\begin{align*}
\chi_{k-1}\mathcal{M}(s)&\le C\chi_{k-1}\bigl(1+\|\tilde{Y}(t_{\ell(s)})\|_{\gamma-\kappa}^2+\|\tilde{Y}(s)-\tilde{Y}(t_{\ell(s)})\|_{\gamma-\kappa}^2+\|\tilde{Z}^{x_0}(t_{\ell(s)})\|_{\gamma-\kappa}^2\bigr)\\
&\le C_\kappa+C_\kappa\Delta t^{-2\theta}+C_\kappa\Delta t^{2\min(\frac{\kappa}{4},\frac12)-6\theta}+ C_\kappa\|\tilde{Z}^{x_0}(t_{\ell(s)})\|_\gamma^2.
\end{align*}
Using the Cauchy--Schwarz inequality and Lemma~\ref{lem:tamedGaussian}, if $\theta$ is sufficiently small, one then obtains
\[
\underset{0\le k\le K}\sup~\bigl(\E[\chi_{k-1}\underset{0\le s \le t_k}\sup~\|\tilde{Y}(t_{\ell(s)})-\tilde{Y}(s)\|_{\gamma-\kappa}^m\mathcal{M}(s)^m]\bigr)^{\frac1m}\le C(1+\|x_0\|_\gamma^3),
\]
which then gives
\[
\underset{0\le k\le K}\sup~\bigl(\E[\chi_{k-1}\underset{0\le t \le t_k}\sup~\|\tilde{r}_2(t)\|_\gamma^m]\bigr)^{\frac1m}\le C_m(T)(1+\|x_0\|_\gamma^3).
\]
Gathering the estimates the concludes the proof of Lemma~\ref{lem:tamedGaussian}.
\end{proof}

\begin{proof}[Proof of Lemma~\ref{lem:tamed-R}]
Note that the process $\tilde{R}$ defined by~\eqref{eq:Rrtilde} is solution of the evolution equation
\[
\frac{d\tilde{R}(t)}{dt}+A^2\tilde{R}(t)+AP_NF(\tilde{R}(t)+\tilde{r}(t)+\tilde{Z}^{x_0}(t_{\ell(t)}))=0,
\]
with initial value $\tilde{R}(0)=0$.

The proof of Lemma~\ref{lem:tamed-R} requires to complete three steps, which use different arguments, similar to those that would be needed to prove moment bounds~\eqref{eq:momentsX-Galerkin}: first, energy estimates in the $\|A^{-\frac12}\cdot\|$ and $\|\cdot\|$ norms; second, arguments depending on dimension $d$ to obtain estimates in the $\|\cdot\|_{L^6}$ norm; third, the mild formulation to get estimates in the $\|\cdot\|_{\gamma}$ norm.

$\bullet$ Step $1$: let us prove that
\begin{equation}\label{eq:claim1}
\underset{0\le k\le K}\sup~\bigl(\E[\chi_{k-1}\underset{0\le t \le t_k}\sup~\|\tilde{R}(t)\|^m]\bigr)^{\frac1m}\le C_m(T)(1+\|x_0\|_\gamma^q).
\end{equation}

First, observe that $\widetilde R(t) \in \hh$, therefore one has the following energy estimate in the $\|A^{-\frac12}\PP \cdot\|$ norm:
\begin{align*}
&\frac12\frac{d}{dt}\|A^{-\frac12} \tilde{R}(t)\|^2+\|A^{\frac12} \tilde{R}(t)\|^2\\
&=-\langle F(\tilde{R}(t)+\tilde{r}(t)+\tilde{Z}^{x_0}(t)), \tilde{R}(t)\rangle\\
&=-\langle F(\tilde{R}(t)+\tilde{r}(t)+\tilde{Z}^{x_0}(t))-F(\tilde{R}(t)), \tilde{R}(t)\rangle+\langle F(\tilde{R}(t)), \tilde{R}(t)\rangle
\\
&=\langle \tilde{r}(t)+\tilde{Z}^{x_0}(t),\tilde{R}(t)\rangle-\langle (\tilde{r}(t)+\tilde{Z}^{x_0}(t))^3,\tilde{R}(t)\rangle\\
&-3\langle (\tilde{r}(t)+\tilde{Z}^{x_0})^2,\tilde{R}(t)\rangle-3\langle \tilde{r}(t)+\tilde{Z}^{x_0}(t),\tilde{R}(t)^2\rangle\\
&-\|\tilde{R}(t)\|_{L^4}^{4}+\|\tilde{R}(t)\|^2-\langle F(\tilde{R}(t)+\tilde{r}(t)+\tilde{Z}^{x_0}(t)),  \tilde{R}(t)\rangle\\
&\le -\frac12\|\tilde{R}(t)\|_{L^4}^4+\|\tilde{R}(t)\|^2+C\|\tilde{r}(t)+\tilde{Z}^{x_0}(t)\|_{L^4}^4\\
&\le -\frac12\|\tilde{R}(t)\|_{L^4}^4+\frac12\|A^{-\frac12}\tilde{R}(t)\|^2+\frac12\|A^{\frac12}\tilde{R}(t)\|^2+C\|\tilde{r}(t)+\tilde{Z}^{x_0}(t)\|_\gamma^4.
\end{align*}
Applying Gronwall's lemma yields the following inequality: for all $t\in[0,T]$,
\[
\int_0^t \|A^{\frac12}\tilde{R}(s)\|^2ds+\int_0^t \|\tilde{R}(s)\|_{L^4}^4ds\le C(T)\underset{0\le s\le t}\sup~(\|\tilde{r}(s)\|_\gamma^4+\|\tilde{Z}^{x_0}(s)\|_{\gamma}^4).
\]
Second, one has the energy estimate in the $\|\cdot\|$ norm:
\begin{align*}
\frac12\frac{d}{dt}\|\tilde{R}(t)\|^2+\|A\tilde{R}(t)\|^2&=-\langle AF(\tilde{R}(t)+\tilde{r}(t)+\tilde{Z}^{x_0}(t)),\tilde{R}(t)\rangle\\
&=-\langle F(\tilde{R}(t)+\tilde{r}(t)+\tilde{Z}^{x_0}(t)),A\tilde{R}(t)\rangle\\
&=-\langle F(\tilde{R}(t)+\tilde{r}(t)+\tilde{Z}^{x_0})-F(\tilde{R}(t)),A\tilde{R}(t)\rangle\\
&-\langle F(\tilde{R}(t)),A\tilde{R}(t)\rangle\\
&\le \frac12\|A\tilde{R}(t)\|^2+\frac12\|F(\tilde{R}(t)+\tilde{r}(t)+\tilde{Z}^{x_0}(t))-F(\tilde{R}(t))\|^2\\
&+\|\nabla\tilde{R}(t)\|^2-3\|\nabla\tilde{R}(t) \tilde{R}(t)\|^2\\
&\le \frac 12\|A\tilde{R}(t)\|^2+\|\nabla\tilde{R}(t)\|^2\\
&+C\bigl(1+\|\tilde{r}(t)\|_\gamma^6+\|\tilde{Z}^{x_0}(t)\|_\gamma^6\bigr)\bigl(1+\|\tilde{R}(t)\|_{L^4}^4\bigr)-3\|\nabla \tilde{R}(t) \tilde{R}(t)\|^2,
\end{align*}
using Young's inequality and the identity
\[
\langle \tilde{R}^3(t),A\tilde{R}(t)\rangle=-3\langle\nabla \tilde{R}(t) \tilde{R}^2,\nabla \tilde{R}(t)\rangle=-3\|\nabla \tilde{R}(t) \tilde{R}(t)\|^2.
\]

Using the inequality above, one obtains, for all $t\in[0,T]$,
\begin{align*}
\|\tilde{R}(t)\|^2&+\int_0^t\|A\tilde{R}(s)\|^2ds+\int_0^t\|\nabla \tilde{R}(s) \tilde{R}(s)\|^2ds\\
&\le C(T) \int_0^t \|A^{\frac12}\tilde{R}(s)\|^2ds+C(T)\int_0^t\bigl(1+\|\tilde{r}(s)\|_\gamma^6+\|\tilde{Z}^{x_0}(s)\|_\gamma^6\bigr)\bigl(1+\|\tilde{R}(s)\|_{L^4}^4\bigr)ds\\
&\le C(T)\underset{0\le s_le t}\sup~(\|\tilde{r}(s)\|_\gamma^4+\|\tilde{Z}^{x_0}(s)\|_{\gamma}^4)\\
&+C(T)\underset{0\le s\le t}\sup~\bigl(1+\|\tilde{r}(s)\|_\gamma^6+\|\tilde{Z}^{x_0}(s)\|_\gamma^6\bigr)\int_{0}^{t}\bigl(1+\|\tilde{R}(s)\|_{L^4}^4\bigr)ds\\
&\le C(T)\underset{0\le s\le t}\sup~\bigl(1+\|\tilde{r}(s)\|_\gamma^{10}+\|\tilde{Z}^{x_0}(s)\|_\gamma^{10}\bigr).
\end{align*}
Using the moment bound from Lemma~\ref{lem:tamed-r} then concludes the proof of the claim~\eqref{eq:claim1}.

Note that one also obtains the inequalities
\begin{align*}
\underset{0\le k\le K}\sup~\bigl(\E[\chi_{k-1}\bigl(\int_0^{t_k}\|A\tilde{R}(t)\|^2dt\bigr)^m]\bigr)^{\frac1m}&\le C_m(T)(1+\|x_0\|_\gamma^q)\\
\underset{0\le k\le K}\sup~\bigl(\E[\chi_{k-1}\bigl(\int_0^{t_k}\|\nabla \tilde{R}(t) \tilde{R}(s)\|^2ds\bigr)^m]\bigr)^{\frac1m}&\le C_m(T)(1+\|x_0\|_\gamma^q)
\end{align*}
which are used below.

$\bullet$ Step $2$: let us prove that
\begin{equation}\label{eq:claim2}
\underset{0\le k\le K}\sup~\bigl(\E[\chi_{k-1}\underset{0\le t \le t_k}\sup~\|\tilde{R}(t)\|_{L^6}^m]\bigr)^{\frac1m}\le C_m(T)(1+\|x_0\|_\gamma^q).
\end{equation}
Different strategies are necessary to treat the cases $d=1$ and $d=2,3$.

First, assume that $d=1$. Using the definition~\eqref{eq:Rrtilde} of $\tilde{R}(t)$, the Sobolev inequality~\eqref{eq:Sobolev-norm_alpha}, the smoothing property~\eqref{eq:smoothing-HH}, and the fact that $F$ is a polynomial function of degree $3$, one obtains
\begin{align*}
\|\tilde{R}(t)\|_{L^6}&\le \int_0^{t}\|e^{-(t-s)A^2}AF(\tilde{R}(s)+\tilde{r}(s)+\tilde{Z}^{x_0}(t_{\ell(s)}))\|_{L^6}ds\\
&\le C\int_0^{t}\|e^{-(t-s)A^2}A^{1+\frac16}F(\tilde{R}(s)+\tilde{r}(s)+\tilde{Z}^{x_0}(t_{\ell(s)}))\|ds\\
&\le C\int_0^t (t-s)^{-\frac{7}{12}}\|F(\tilde{R}(s)+\tilde{r}(s)+\tilde{Z}^{x_0}(t_{\ell(s)})\|ds\\
&\le C\int_0^t (t-s)^{-\frac{7}{12}}\bigl(1+\|\tilde{R}(s)\|_{L^6}^3+\|\tilde{r}(s)\|_{L^6}^3+\|\tilde{Z}^{x_0}(t_{\ell(s)})\|_{L^6}^3\bigr)ds.
\end{align*}
On the one hand, one has $\|\tilde{r}(s)\|_{L^6}\le \|\tilde{r}(s)\|_{\gamma}$ and $\|\tilde{Z}^{x_0}(t_{\ell(s)})\|_{L^6}\le \|\tilde{Z}^{x_0}(t_{\ell(s)})\|_{\gamma}$ for all $s\le t\le t_k$.

On the other hand, using the Gagliardo-Nirenberg inequality~\eqref{eq:L6-d1-GN} and Young's inequality, one obtains
\begin{align*}
(t-s)^{-\frac{7}{12}}\|\tilde{R}(s)\|_{L^6}^3&\le C\|A\tilde{R}(s)\|^{\frac12}(t-s)^{-\frac{7}{12}}\|\tilde{R}(s)\|^{\frac52}\\
&\le C\|A\tilde{R}(s)\|^2+C(t-s)^{-\frac{7}{9}}\|\tilde{R}(s)\|^{\frac{10}{3}}.
\end{align*}
Using the inequalities proved above then concludes the proof of the claim~\eqref{eq:claim2} in the case $d=1$.

Second, assume that $d=2$ or $d=3$. The mapping $t\mapsto J(\tilde{R}(t))=\frac12\|\tilde{R}(t)\|_1^2+\frac14\|\tilde{R}(t)\|_{L^4}^4-\frac12\|\tilde{R}(t)\|_{L^2}^{2}$, where $J$ is the energy functional defined by~\eqref{eq:energy}, satisfies the evolution equation
\begin{align*}
\frac{dJ(\tilde{R}(t))}{dt}&=\langle A\tilde{R}(t)+F(\tilde{R}(t)),\frac{d\tilde{R}(t)}{dt}\rangle\\
&=-\langle A\tilde{R}(t),A^2\tilde{R}(t)\rangle-\langle A\tilde{R}(t),AP_NF(\tilde{R}(t)+\tilde{r}(t)+\tilde{Z}^{x_0}(t_{\ell(t)}))\rangle\\
&-\langle F(\tilde{R}(t)),A^2\tilde{R}(t)\rangle-\langle F(\tilde{R}(t)),AP_NF(\tilde{R}(t)+\tilde{r}(t)+\tilde{Z}^{x_0}(t_{\ell(t)}))\rangle\\
&=-\langle A^{\frac32}\tilde{R}(t),A^{\frac32}\tilde{R}(t)\rangle-\langle A^{\frac32}\tilde{R}(t),A^{\frac 12} P_NF(\tilde{R}(t)+\tilde{r}(t)+\tilde{Z}^{x_0}(t_{\ell(t)}))\rangle\\
&-\langle A^{\frac12}P_NF(\tilde{R}(t)),A^{\frac32}\tilde{R}(t)\rangle-\langle A^{\frac12}\tilde{R}(t),A^{\frac12}P_NF(\tilde{R}(t)+\tilde{r}(t)+Z^{x_0}(t_{\ell(t)}))\rangle\\
&=-\|A^{\frac32}\tilde{R}(t)+A^{\frac12}P_NF(\tilde{R}(t)+\tilde{r}(t)+\tilde{Z}^{x_0}(t_{\ell(t)}))\|^2\\
&+\langle A^{\frac12}P_NF(\tilde{R}(t)+\tilde{r}(t)+\tilde{Z}^{x_0}(t_{\ell(t)})),A^{\frac12}P_NF(\tilde{R}(t)+\tilde{r}(t)+\tilde{Z}^{x_0}(t_{\ell(t)}))\rangle\\
&+\langle A^{\frac32}\tilde{R}(t),A^{\frac12}P_NF(\tilde{R}(t)+\tilde{r}(t)+\tilde{Z}^{x_0}(t_{\ell(t)}))\rangle\\
&-\langle A^{\frac32}\tilde{R}(t),A^{\frac12}P_NF(\tilde{R}(t))\rangle\\
&-\langle A^{\frac12}P_NF(\tilde{R}(t)),A^{\frac12}P_NF(\tilde{R}(t)+\tilde{r}(t)+\tilde{Z}^{x_0}(t_{\ell(t)}))\rangle\\
&=-\|A^{\frac32}\tilde{R}(t)+A^{\frac12}P_NF(\tilde{R}(t)+\tilde{r}(t)+\tilde{Z}^{x_0}(t_{\ell(t)}))\|^2\\
&+\langle F(\tilde{R}(t)+\tilde{r}(t)+\tilde{Z}^{x_0}(t_{\ell(t)}))-F(\tilde{R}(t)),A^2\tilde{R}(t)+AP_NF(\tilde{R}(t)+\tilde{r}(t)+\tilde{Z}^{x_0}(t_{\ell(t)}))\rangle\\
&=-\|A^{\frac32}\tilde{R}(t)+A^{\frac12}P_NF(\tilde{R}(t)+\tilde{r}(t)+\tilde{Z}^{x_0}(t_{\ell(t)}))\|^2\\
&+\langle A^{\frac12}F(\tilde{R}(t)+\tilde{r}(t)+\tilde{Z}^{x_0}(t_{\ell(t)}))-A^{\frac12}F(\tilde{R}(t)),A^{\frac32}\tilde{R}(t)+A^{\frac12}P_NF(\tilde{R}(t)+\tilde{r}(t)+\tilde{Z}^{x_0}(t_{\ell(t)}))\rangle.
\end{align*}
Using Young's inequality, and the fact that $J(\tilde{R}(0))=0$ and {$A^{\frac 12}(F(\tilde{R}(s)+\tilde{r}(s)+\tilde{Z}^{x_0}(t_{\ell(s)}))-F(\tilde{R}(s)))\in \hh$}, one obtains the inequality
\[
J(\tilde{R}(t))\le \frac12\int_0^t \|F(\tilde{R}(s)+\tilde{r}(s)+\tilde{Z}^{x_0}(t_{\ell(s)}))-F(\tilde{R}(s))\|_{1}^2 ds
\]
for all $t\in[0,T]$. Using the identity $\nabla F(R)=(2R^2-1)\nabla R$, one obtains for all $s\in[0,T]$
\begin{align*}
\|F(\tilde{R}(s)&+\tilde{r}(s)+\tilde{Z}^{x_0}(t_{\ell(s)}))-F(\tilde{R}(s))\|_{ 1}^2\\
&=\|(2\tilde{R}(s)^2-1)\nabla \tilde{R}(s)-2\bigl((\tilde{R}(s)+\tilde{r}(s)+\tilde{Z}^{x_0}(t_{\ell(s)}))^2-1\bigr)\nabla \bigl(\tilde{R}(s)+\tilde{r}(s)+\tilde{Z}^{x_0}(s)\bigr)\|^2\\
&\le C\|\tilde{r}(s)+\tilde{Z}^{x_0}(t_{\ell(s)})\|_{L^\infty}^2\|\tilde{R}(s)\nabla \tilde{R}(s)\|^2\\
&+C\|\tilde{r}(s)+\tilde{Z}^{x_0}(t_{\ell(s)})\|_{L^\infty}^4\|\nabla \tilde{R}(s)\|^2\\
&+C\|\tilde{r}(s)+\tilde{Z}^{x_0}(t_{\ell(s)})\|_{L^\infty}^2\|\nabla(\tilde{r}(s)+\tilde{Z}^{x_0}(t_{\ell(s)}))\|_{L^4}^{2}\\
&+C\|\nabla(\tilde{r}(s)+\tilde{Z}^{x_0}(t_{\ell(s)}))\|_{L^4}^{2}\|\tilde{R}(s)\|_{L^4}^4\\
&+C\|\tilde{r}(s)+\tilde{Z}(s)\|_{L^\infty}^4\|\nabla(\tilde{r}(s)+\tilde{Z}^{x_0}(t_{\ell(s)})\|^2\\
&\le C\bigl(1+\|\tilde{r}(s)\|_{\gamma}^4+\|\tilde{Z}(t_{\ell(s)})\|_{\gamma}^4\bigr)\bigl(1+\|\nabla\tilde{R}(s)\|^2+\|\tilde{R}(s)\nabla\tilde{R}(s)\|^2+\|\tilde{R}(s)\|_{L^4}^4\bigr),
\end{align*}
using the inequality $\|\nabla \cdot\|_{L^4}\le C\|\nabla \cdot\|_{\frac{d}{4}}\le C\|\cdot\|_{1+\frac{d}{4}}\le C\|\cdot\|_\gamma$, which follows from the Sobolev embedding $H^{\frac{d}{4}}\subset L^4$, see the inequality~\eqref{eq:L4-d}, and from the condition $\gamma\ge 1+\frac{d}{4}$.

One then obtains
\begin{align*}
\chi_{k-1}\underset{0\le t\le t_k}\sup~J(\tilde{R}(t))\le C\chi_{k-1}\underset{0\le t\le t_k}\sup~&\bigl(1+\|\tilde{r}(t)\|_{\gamma}^4+\|\tilde{Z}(t)\|_{\gamma}^4\bigr)\\
&\cdot\chi_{k-1}\int_{0}^{t_k}\bigl(1+\|\nabla\tilde{R}(t)\|^2+\|\tilde{R}(t)\nabla\tilde{R}(t)\|^2+\|\tilde{R}(t)\|_{L^4}^4\bigr)dt.
\end{align*}
Using the previous inequalities and H\"older's inequality, one obtains
\[
\underset{0\le k\le K}\sup~\bigl(\E[\chi_{k-1}\underset{0\le t \le t_k}\sup~|J(\tilde{R}(t))|^m]\bigr)^{\frac1m}\le C_m(T)(1+\|x_0\|_\gamma^q).
\]
Finally, using the Sobolev embedding $H^{1}\subset L^6$, for all $d\in\{2,3\}$, see the inequality~\eqref{eq:L6-d}, and the lower bound $\frac14\|\cdot\|_{L^4}^4-\frac12\|\cdot\|_{L^2}^2\ge -C$, for all $t\in[0,T]$ one has
\[
\|\tilde{R}(t)\|_{L^6}^2\le C\|\tilde{R}(t)\|_{H^1}^2\le C\bigl(J(\tilde{R}(t))+1\bigr).
\]
Using the inequalities proved above then concludes the proof of the claim~\eqref{eq:claim2} in the case $d\in\{2,3\}$.

$\bullet$ Step $3$: let us finally prove the required estimate. Using the condition $\gamma\in(\frac{d}{2},\Gamma)$, one has $\frac12+\frac{\gamma}{4}<1$. Using the definition of $\tilde{R}(t)$ and the smoothing inequality, for all $t\in[0,T]$, one has
\begin{align*}
\|\tilde{R}(t)\|_\gamma&\le \int_{0}^{t}\|e^{-(t-s)A^2}A  P^N  F(\tilde{R}(s)+\tilde{r}(s)+\tilde{Z}^{x_0}(t_{\ell(s)}))\|_{\gamma}ds\\
&\le C\int_{0}^{t}(t-s)^{-\frac12-\frac{\gamma}{4}}\| F(\tilde{R}(s)+\tilde{r}(s)+\tilde{Z}^{x_0}(t_{\ell(s)}))\|ds\\
&\le C\int_{0}^{t}(t-s)^{-\frac12-\frac{\gamma}{4}}\bigl(1+\|\tilde{R}(s)\|_{L^6}^3+\|\tilde{r}(s)\|_{L^6}^3+\|\tilde{Z}^{x_0}(t_{\ell(s)}))\|_{L^6}^3\bigr)ds\\
&\le C\int_{0}^{t}(t-s)^{-\frac12-\frac{\gamma}{4}}\bigl(1+\|\tilde{R}(s)\|_{L^6}^3+\|\tilde{r}(s)\|_{\gamma}^3+\|\tilde{Z}^{x_0}(t_{\ell(s)}))\|_{\gamma}^3\bigr)ds
\end{align*}
since $F$ is a polynomial of degree $3$. One then obtains the inequality
\[
\chi_{k-1}\underset{0\le t\le t_k}\sup~\|\tilde{R}(t)\|_\gamma\le C(T)\chi_{k-1}\underset{0\le t\le t_k}\sup~\bigl(\|\tilde{R}(t)\|_{L^6}^3+\|\tilde{r}(t)\|_{\gamma}^3\bigr)+\underset{0\le t\le T}\sup~\|\tilde{Z}^{x_0}(t)\|_{\gamma}^3.
\]
It remains to use the inequalities proved above to conclude the proof of Lemma~\ref{lem:tamed-R}.
\end{proof}

We are now in position to provide the proof of Theorem~\ref{theo:momentbounds-tamed}.
\begin{proof}[Proof of Theorem~\ref{theo:momentbounds-tamed}]
Using the estimates from Lemmas~\ref{lem:tamedGaussian}, \ref{lem:tamed-r} and~\ref{lem:tamed-R}, and the identity
\[
\tilde{X}(t)=\tilde{R}(t)+\tilde{r}(t)+\tilde{Z}^{x_0}(t),
\]
one obtains
\begin{equation}
\underset{0\le k\le K}\sup~\bigl(\E[\chi_{k-1}\underset{0\le t \le t_k}\sup~\|\tilde{X}(t)\|_\gamma^m]\bigr)^{\frac1m}\le C_m(T)(1+\|x_0\|_\gamma^q).
\end{equation}
Let us now prove the following inequality:
\begin{equation}
\underset{0\le k\le K}\sup~\bigl(\E[\bigl(1-\chi_{k}\bigr)\underset{0\le t \le t_k}\sup~\|\tilde{X}(t)\|_\gamma^m]\bigr)^{\frac1m}\le C_m(T)(1+\|x_0\|_\gamma^{q}).
\end{equation}
Using the identity
\[
\chi_{k-1}-\chi_{k}=\mathds{1}_{\underset{0\le \ell\le k-1}\sup~\|X_\ell\|_\gamma\le \Delta t^{-\theta}}-\mathds{1}_{\underset{0\le \ell\le k}\sup~\|X_\ell\|_\gamma\le \Delta t^{-\theta}}=\chi_{k-1}\mathds{1}_{\|X_k\|_\gamma> \Delta t^{-\theta}},
\]
and the convention $\chi_{-1}=1$, one has
\[
1-\chi_k=\sum_{\ell=0}^{k}\bigl(\chi_{\ell-1}-\chi_{\ell}\bigr)=\sum_{\ell=0}^{k}\chi_{\ell-1}\mathds{1}_{\|X_\ell\|_\gamma>\Delta t^{-\theta}}.
\]
for all $k\in\{0,\ldots,K\}$. As a consequence, one has
\begin{align*}
\bigl(\mathbb{E}[(1-\chi_k)\underset{0\le t \le t_k}\sup~\|\tilde{X}(t)\|_\gamma^m]\bigr)^{\frac1m}&\le \sum_{\ell=0}^{n}\bigl(\mathbb{E}[\chi_{\ell-1}\mathds{1}_{\|X_\ell\|_\gamma>\Delta t^{-\theta}}\underset{0\le t \le t_k}\sup~\|\tilde{X}(t)\|_\gamma^m]\bigr)^{\frac1m}\\
&\le \sum_{\ell=0}^{n}\bigl(\mathbb{E}[\chi_{\ell-1}\mathds{1}_{\|X_\ell\|_\gamma>\Delta t^{-\theta}}]\bigr)^{\frac{1}{2m}}\bigl(\mathbb{E}[\underset{0\le t \le t_k}\sup~\|\tilde{X}(t)\|_\gamma^{2m}]\bigr)^{\frac{1}{2m}}\\
&\le \sum_{\ell=0}^{n}\bigl(\mathbb{E}[\chi_{\ell-1}\Delta t^{m(\theta)\theta}\|X_\ell\|_\gamma^{m(\theta)}]\bigr)^{\frac{1}{2m}}\bigl(\mathbb{E}[\underset{0\le t \le t_k}\sup~\|\tilde{X}(t)\|_\gamma^{2m}]\bigr)^{\frac{1}{2m}},
\end{align*}
using Markov's inequality in the last step, with the auxiliary parameter $m(\theta)\ge 1$ chosen below.

On the one hand, one has for all $\ell\in\{0,\ldots,K\}$
\[
\bigl(\mathbb{E}[\chi_{\ell-1}\Delta t^{m(\theta)\theta}\|X_\ell\|_\gamma^{m(\theta)}]\bigr)^{\frac{1}{2m}}\le C_{m,M,\theta}(T)\Delta t^{\frac{M}{2m(\theta)}\theta}(1+\|x_0\|_\gamma^q)^{\frac{m(\theta)}{2m}}.
\]
On the other hand, for all $t\in[0,T]$, one has $\tilde{X}(t)=\tilde{Y}(t)+\tilde{Z}^{x_0}(t)$, therefore
\begin{align*}
\|\tilde{X}(t)\|_\gamma&\le \|x_0\|_\gamma+\|\tilde{Z}(t)\|_\gamma+\int_{0}^{t}\frac{\|e^{-(t-s)A^2}AP_N F(X_{\ell(s)})\|_\gamma}{1+\Delta t\|P_NF(X_{\ell(s)})\|}ds\\
&\le \|x_0\|_\gamma+\|\tilde{Z}(t)\|_\gamma+\int_{0}^{t}(t-s)^{-\frac{2+\gamma}{4}}\frac{\|P_NF(X_{\ell(s)})\|}{1+\Delta t\|P_NF(X_{\ell(s)})\|}ds\\
&\le \|x_0\|_\gamma+\|\tilde{Z}(t)\|_\gamma+\frac{1}{\Delta t}\int_{0}^{t}(t-s)^{-\frac{2+\gamma}{4}}ds.
\end{align*}
This gives
\[
\bigl(\mathbb{E}[\underset{0\le t \le t_k}\sup~\|\tilde{X}(t)\|_\gamma^{2m}]\bigr)^{\frac{1}{2m}}\le \bigl(\mathbb{E}[\underset{0\le t \le T}\sup~\|\tilde{X}(t)\|_\gamma^{2m}]\bigr)^{\frac{1}{2m}}\le C_m(T)\Delta t^{-1}(1+\|x_0\|_\gamma).
\]
Gathering the two estimates and using the inequality $k\le K=\frac{T}{\Delta t}$ then gives
\begin{align*}
\bigl(\mathbb{E}[(1-\chi_k)\underset{0\le t \le t_k}\sup~\|\tilde{X}(t)\|_\gamma^m]\bigr)^{\frac1m}&\le C_{m,\theta}(T)\Delta t^{-2}(1+\|x_0\|_\gamma)\Delta t^{\frac{m(\theta)}{2m}\theta}(1+\|x_0\|_\gamma^q)^{\frac{m(\theta)}{2m}}.
\end{align*}
Choosing $m(\theta)=\max(\frac {4m}\theta,1)$ then gives the required estimate.

Finally, using the inequality $\chi_K\le \chi_{K-1}$, one obtains
\begin{align*}
\bigl(\E[\underset{0\le t \le T}\sup~\|\tilde{X}(t)\|_\gamma^m]\bigr)^{\frac1m}&\le \bigl(\E[\bigl(1-\chi_{K}\bigr)\underset{0\le t \le t_K}\sup~\|\tilde{X}(t)\|_\gamma^m]\bigr)^{\frac1m}\\
&+\bigl(\E[\chi_{K}\underset{0\le t \le t_K}\sup~\|\tilde{X}(t)\|_\gamma^m]\bigr)^{\frac1m}\\
&\le C_m(T)(1+\|x_0\|_\gamma^{q}).
\end{align*}
This concludes the proof of Theorem~\ref{theo:momentbounds-tamed}.
\end{proof}

\section*{Acknowledgements}

The work of CEB is partially supported by the following project SIMALIN (ANR-19-CE40-0016) operated by the French National Research Agency.


\begin{thebibliography}{10}

\bibitem{ABNP21}
D.~Antonopoulou, \'{L}. Ba\v{n}as, R.~N\"{u}rnberg, and A.~Prohl.
\newblock Numerical approximation of the stochastic {C}ahn-{H}illiard equation
  near the sharp interface limit.
\newblock {\em Numer. Math.}, 147(3):505--551, 2021.

\bibitem{beccari2019strong}
M.~Beccari, M.~Hutzenthaler, A.~Jentzen, R.~Kurniawan, F.~Lindner, and
  D.~Salimova.
\newblock Strong and weak divergence of exponential and linear-implicit {E}uler
  approximations for stochastic partial differential equations with
  superlinearly growing nonlinearities.
\newblock {\em arXiv preprint arXiv:1903.06066}, 2019.

\bibitem{BJ19}
S.~Becker and A.~Jentzen.
\newblock Strong convergence rates for nonlinearity-truncated {E}uler-type
  approximations of stochastic {G}inzburg-{L}andau equations.
\newblock {\em Stochastic Process. Appl.}, 129(1):28--69, 2019.

\bibitem{BK19}
D.~Bl\"{o}mker and M.~Kamrani.
\newblock Numerically computable a posteriori-bounds for the stochastic
  {A}llen-{C}ahn equation.
\newblock {\em BIT}, 59(3):647--673, 2019.

\bibitem{Bre14}
C.~E. Br\'{e}hier.
\newblock Approximation of the invariant measure with an {E}uler scheme for
  stochastic {PDE}s driven by space-time white noise.
\newblock {\em Potential Anal.}, 40(1):1--40, 2014.

\bibitem{Bre22}
C.-E. Br\'{e}hier.
\newblock Approximation of the invariant distribution for a class of ergodic
  {SPDE}s using an explicit tamed exponential {E}uler scheme.
\newblock {\em ESAIM Math. Model. Numer. Anal.}, 56(1):151--175, 2022.

\bibitem{BCH19}
C.-E. Br\'{e}hier, J.~Cui, and J.~Hong.
\newblock Strong convergence rates of semidiscrete splitting approximations for
  the stochastic {A}llen-{C}ahn equation.
\newblock {\em IMA J. Numer. Anal.}, 39(4):2096--2134, 2019.

\bibitem{BD18}
C.~E. Br\'{e}hier and A.~Debussche.
\newblock Kolmogorov equations and weak order analysis for {SPDE}s with
  nonlinear diffusion coefficient.
\newblock {\em J. Math. Pures Appl. (9)}, 119:193--254, 2018.

\bibitem{BG18}
C.-E. Br\'{e}hier and L.~Gouden\`ege.
\newblock Analysis of some splitting schemes for the stochastic {A}llen-{C}ahn
  equation.
\newblock {\em Discrete Contin. Dyn. Syst. Ser. B}, 24(8):4169--4190, 2019.

\bibitem{BG20}
C.-E. Br\'{e}hier and L.~Gouden\`ege.
\newblock Weak convergence rates of splitting schemes for the stochastic
  {A}llen-{C}ahn equation.
\newblock {\em BIT}, 60(3):543--582, 2020.

\bibitem{MR3813967}
H.~Brezis and P.~Mironescu.
\newblock Gagliardo-{N}irenberg inequalities and non-inequalities: the full
  story.
\newblock {\em Ann. Inst. H. Poincar\'{e} C Anal. Non Lin\'{e}aire},
  35(5):1355--1376, 2018.

\bibitem{CGH21}
M.~Cai, S.~Gan, and Y.~Hu.
\newblock Weak convergence rates for a full implicit scheme of stochastic
  {C}ahn-{H}illiard equation with additive noise.
\newblock {\em arXiv:2111.08198}.

\bibitem{CGW21}
M.~Cai, S.~Gan, and X.~Wang.
\newblock Weak convergence rates for an explicit full-discretization of
  stochastic {A}llen-{C}ahn equation with additive noise.
\newblock {\em J. Sci. Comput.}, 86(3):Paper No. 34, 30, 2021.

\bibitem{CQW22}
M.~Cai, R.~Qi, and X.~Wang.
\newblock Strong convergence rates of an explicit scheme for stochastic
  cahn-hilliard equation with additive noise.
\newblock {\em arXiv:2204.01100}.

\bibitem{Cer01}
S.~Cerrai.
\newblock {\em Second order {PDE}'s in finite and infinite dimension: a
  probabilistic approach}, volume 1762 of {\em Lecture Notes in Mathematics}.
\newblock Springer-Verlag, Berlin, 2001.

\bibitem{CCZZ18}
S.~Chai, Y.~Cao, Y.~Zou, and W.~Zhao.
\newblock Conforming finite element methods for the stochastic
  {C}ahn-{H}illiard-{C}ook equation.
\newblock {\em Appl. Numer. Math.}, 124:44--56, 2018.

\bibitem{Cook70}
H.~Cook.
\newblock Brownian motion in spinodal decomposition.
\newblock {\em Acta Metallurgic}, 18:297--306, 1970.

\bibitem{cui2021}
J.~Cui.
\newblock Explicit numerical methods for high dimensional stochastic nonlinear
  {S}chr\"odinger equation: divergence, regularity and convergence.
\newblock {\em arXiv preprint arXiv:2112.10177}, 2021.

\bibitem{CH19}
J.~Cui and J.~Hong.
\newblock Strong and weak convergence rates of a spatial approximation for
  stochastic partial differential equation with one-sided {L}ipschitz
  coefficient.
\newblock {\em SIAM J. Numer. Anal.}, 57(4):1815--1841, 2019.

\bibitem{CH20}
J.~Cui and J.~Hong.
\newblock Absolute continuity and numerical approximation of stochastic
  {C}ahn-{H}illiard equation with unbounded noise diffusion.
\newblock {\em J. Differential Equations}, 269(11):10143--10180, 2020.

\bibitem{CHS21}
J.~Cui, J.~Hong, and L.~Sun.
\newblock Strong {C}onvergence of {F}ull {D}iscretization for {S}tochastic
  {C}ahn--{H}illiard {E}quation {D}riven by {A}dditive {N}oise.
\newblock {\em SIAM J. Numer. Anal.}, 59(6):2866--2899, 2021.

\bibitem{CHS21a}
J.~Cui, J.~Hong, and L.~Sun.
\newblock Weak convergence and invariant measure of a full discretization for
  parabolic {SPDE}s with non-globally {L}ipschitz coefficients.
\newblock {\em Stochastic Process. Appl.}, 134:55--93, 2021.

\bibitem{DZ14}
G.~Da~Prato and J.~Zabczyk.
\newblock {\em Stochastic equations in infinite dimensions}, volume 152 of {\em
  Encyclopedia of Mathematics and its Applications}.
\newblock Cambridge University Press, Cambridge, second edition, 2014.

\bibitem{MR2944369}
E.~Di~Nezza, G.~Palatucci, and E.~Valdinoci.
\newblock Hitchhiker's guide to the fractional {S}obolev spaces.
\newblock {\em Bull. Sci. Math.}, 136(5):521--573, 2012.

\bibitem{FLZ17}
X.~Feng, Y.~Li, and Y.~Zhang.
\newblock Finite element methods for the stochastic {A}llen-{C}ahn equation
  with gradient-type multiplicative noise.
\newblock {\em SIAM J. Numer. Anal.}, 55(1):194--216, 2017.

\bibitem{FLZ20}
X.~Feng, Y.~Li, and Y.~Zhang.
\newblock A fully discrete mixed finite element method for the stochastic
  {C}ahn-{H}illiard equation with gradient-type multiplicative noise.
\newblock {\em J. Sci. Comput.}, 83(1):Paper No. 23, 24, 2020.

\bibitem{FKLL18}
D.~Furihata, M.~Kov\'{a}cs, S.~Larsson, and F.~Lindgren.
\newblock Strong convergence of a fully discrete finite element approximation
  of the stochastic {C}ahn-{H}illiard equation.
\newblock {\em SIAM J. Numer. Anal.}, 56(2):708--731, 2018.

\bibitem{GMW05}
M.~Gameiro, K.~Mischaikow, and T.~Wanner.
\newblock Evolution of pattern complexity in the {C}ahn–{H}illiard theory of
  phase separation.
\newblock {\em Acta Materialia}, 53:693–704, 2005.

\bibitem{Gil15}
M.~B. Giles.
\newblock Multilevel {M}onte {C}arlo methods.
\newblock {\em Acta Numer.}, 24:259--328, 2015.

\bibitem{gyongy2016convergence}
I.~Gy{\"o}ngy, S.~Sabanis, and D.~{\v{S}}i{\v{s}}ka.
\newblock {Convergence of tamed Euler schemes for a class of stochastic
  evolution equations}.
\newblock {\em Stochastics and Partial Differential Equations: Analysis and
  Computations}, 4(2):225--245, 2016.

\bibitem{HDS22}
J.~Hong, D.~Jin, and D.~Sheng.
\newblock Finite difference method for stochastic {C}ahn--{H}illiard equation:
  Strong convergence rate and density convergence.
\newblock {\em arXiv:2203.00571}.

\bibitem{hutzenthaler2015numerical}
M.~Hutzenthaler and A.~Jentzen.
\newblock {\em Numerical approximations of stochastic differential equations
  with non-globally Lipschitz continuous coefficients}.

\bibitem{HJ14}
M.~Hutzenthaler and A.~Jentzen.
\newblock On a perturbation theory and on strong convergence rates for
  stochastic ordinary and partial differential equations with non-globally
  monotone coefficients.
\newblock {\em arXiv:1401.0295}.

\bibitem{hutzenthaler2011strong}
M.~Hutzenthaler, A.~Jentzen, and P.~E Kloeden.
\newblock Strong and weak divergence in finite time of euler's method for
  stochastic differential equations with non-globally lipschitz continuous
  coefficients.
\newblock {\em Proceedings of the Royal Society A: Mathematical, Physical and
  Engineering Sciences}, 467(2130):1563--1576, 2011.

\bibitem{hutzenthaler2012strong}
M.~Hutzenthaler, A.~Jentzen, and P.~E Kloeden.
\newblock Strong convergence of an explicit numerical method for sdes with
  nonglobally lipschitz continuous coefficients.
\newblock {\em The Annals of Applied Probability}, 22(4):1611--1641, 2012.

\bibitem{jentzen2020strong}
A.~Jentzen and P.~Pu{\v{s}}nik.
\newblock {Strong convergence rates for an explicit numerical approximation
  method for stochastic evolution equations with non-globally Lipschitz
  continuous nonlinearities}.
\newblock {\em IMA Journal of Numerical Analysis}, 40(2):1005--1050, 2020.

\bibitem{KLL18}
M.~Kov\'{a}cs, S.~Larsson, and F.~Lindgren.
\newblock On the discretisation in time of the stochastic {A}llen-{C}ahn
  equation.
\newblock {\em Math. Nachr.}, 291(5-6):966--995, 2018.

\bibitem{KLM11}
M.~Kov{\'a}cs, S.~Larsson, and A.~Mesforush.
\newblock Finite element approximation of the {C}ahn-{H}illiard-{C}ook
  equation.
\newblock {\em SIAM J. Numer. Anal.}, 49(6):2407--2429, 2011.

\bibitem{Lang16}
A.~Lang.
\newblock A note on the importance of weak convergence rates for {SPDE}
  approximations in multilevel {M}onte {C}arlo schemes.
\newblock In {\em Monte {C}arlo and quasi-{M}onte {C}arlo methods}, volume 163
  of {\em Springer Proc. Math. Stat.}, pages 489--505. Springer, [Cham], 2016.

\bibitem{Lan71}
J.S. Langer.
\newblock Theory of spinodal decomposition in alloys.
\newblock {\em Ann.Phys.}, 65:53--96, 1971.

\bibitem{MR3536036}
X.~Li, Z.~Qiao, and H.~Zhang.
\newblock An unconditionally energy stable finite difference scheme for a
  stochastic {C}ahn-{H}illiard equation.
\newblock {\em Sci. China Math.}, 59(9):1815--1834, 2016.

\bibitem{LQ20}
Z.~Liu and Z.~Qiao.
\newblock Strong approximation of monotone stochastic partial differential
  equations driven by white noise.
\newblock {\em IMA J. Numer. Anal.}, 40(2):1074--1093, 2020.

\bibitem{lord2014introduction}
G.~J. Lord, C.~E. Powell, and T.~Shardlow.
\newblock {\em {An introduction to computational stochastic PDEs}}, volume~50.
\newblock Cambridge University Press, 2014.

\bibitem{MP17}
A.~K. Majee and A.~Prohl.
\newblock Optimal strong rates of convergence for a space-time discretization
  of the stochastic {A}llen-{C}ahn equation with multiplicative noise.
\newblock {\em Comput. Methods Appl. Math.}, 18(2):297--311, 2018.

\bibitem{QCW22}
R.~Qi, M.~Cai, and X.~Wang.
\newblock Strong convergence rates of a fully discrete scheme for the
  {Cahn-Hilliard-Cook} equation.
\newblock {\em arXiv:2204.01630}.

\bibitem{QW19}
R.~Qi and X.~Wang.
\newblock Optimal error estimates of {G}alerkin finite element methods for
  stochastic {A}llen-{C}ahn equation with additive noise.
\newblock {\em J. Sci. Comput.}, 80(2):1171--1194, 2019.

\bibitem{QW20}
R.~Qi and X.~Wang.
\newblock Error estimates of semidiscrete and fully discrete finite element
  methods for the {C}ahn-{H}illiard-{C}ook equation.
\newblock {\em SIAM J. Numer. Anal.}, 58(3):1613--1653, 2020.

\bibitem{Wang2020}
X.~Wang.
\newblock An efficient explicit full-discrete scheme for strong approximation
  of stochastic {A}llen-{C}ahn equation.
\newblock {\em Stochastic Process. Appl.}, 130(10):6271--6299, 2020.

\bibitem{Yagi10}
A.~Yagi.
\newblock {\em Abstract parabolic evolution equations and their applications}.
\newblock Springer Monographs in Mathematics. Springer-Verlag, Berlin, 2010.

\end{thebibliography}

\end{document}